\documentclass[a4paper]{amsart}

\title[Scissors automorphism groups II: Solomon--Tits theorems]{Scissors automorphism groups II: \\ Solomon--Tits theorems}

\author[A. Kupers]{Alexander Kupers}
\address{Department of Computer and Mathematical Sciences, University of Toronto Scarborough, 1265 Military Trail, Toronto, ON M1C 1A4, Canada}
\email{a.kupers@utoronto.ca}

\author[E. Lemann]{Ezekiel Lemann} 
\address{Institute of Mathematics, University of Aberdeen, Aberdeen AB24 3UE, UK}
\email{ezekiel.lemann@abdn.ac.uk}

\author[C. Malkiewich]{Cary Malkiewich}
\address{Department of Mathematics and Statistics, Binghamton University, PO Box 6000, Binghamton, NY 13902}
\email{cmalkiew@binghamton.edu}

\author[J. Miller]{Jeremy Miller} 
\address{Mathematical Sciences Building, Purdue University 150 N University St, West Lafayette, IN 47907}
\email{jeremykmiller@purdue.edu}

\author[R. J. Sroka]{Robin J. Sroka} 
\address{Mathematisches Institut, Universität Münster, Einsteinstrasse 62, 48149 Münster, Germany}
\email{robinjsroka@uni-muenster.de}

\usepackage[backend=bibtex, style=alphabetic, hyperref, backref, backrefstyle=none, maxbibnames=99, sorting=nyt, safeinputenc, giveninits=true]{biblatex}
\AtEveryBibitem{\clearlist{language}}
\RequirePackage{doi}

\renewbibmacro*{doi+eprint+url}{
	\iftoggle{bbx:doi}
	{\printfield{doi}}
	{}
	\newunit\newblock
	\ifboolexpr{togl {bbx:eprint} and test {\iffieldundef{doi}}}
	{\usebibmacro{eprint}}
	{}
	\newunit\newblock
	\ifboolexpr{togl {bbx:url} and test {\iffieldundef{doi}}  and test {\iffieldundef{eprint}}}
	{\usebibmacro{url+urldate}}
	{}}

\usepackage[dvipsnames,svgnames,x11names,hyperref]{xcolor}
\PassOptionsToPackage{hyphens}{url}\usepackage{hyperref}
\hypersetup{
	colorlinks=true,
	citecolor=Mahogany,
	linkcolor=Mahogany,
	urlcolor=Mahogany,
	filecolor=Mahogany,
	bookmarksdepth=3
}

\usepackage[T1]{fontenc}
\usepackage{lmodern}
\usepackage{microtype}
\usepackage[mathscr]{euscript}

\usepackage{multicol}
\usepackage{amssymb}
\usepackage{amsmath}
\usepackage{amsthm}
\usepackage{mathtools}
\calclayout

\usepackage{tikz}
\usepackage{tikz-cd}
\usepackage{xparse}
\usepackage{etoolbox}
\usepackage{aliascnt}
\usepackage {hyperref}
\hypersetup{
	colorlinks=true,
	linkcolor=blue,
	filecolor=blue,
	citecolor = blue,
	urlcolor=blue,
	pdfencoding=unicode,
}
\usepackage[capitalize,nameinlink,noabbrev]{cleveref}

\AtBeginDocument{
	\def\MR#1{}
}

\usepackage{enumitem}
\setenumerate[1]{label=(\roman*),}
\setitemize[1]{label=\raisebox{0.25ex}{\boldmath$\cdot$}}

\usepackage[textsize = footnotesize]{todonotes}
\usepackage{comment}

\newcounter{environmentcounteralphabetic}

\numberwithin{environmentcounter}{section}

\newaliascnt{definitioncounteralias}{environmentcounter}

\crefname{definitioncounteralias}{Definition}{Definitions}
\newaliascnt{remarkcounteralias}{environmentcounter}

\crefname{remarkcounteralias}{Remark}{Remarks}
\newaliascnt{examplecounteralias}{environmentcounter}

\crefname{examplecounteralias}{Example}{Examples}
\newaliascnt{constructioncounteralias}{environmentcounter}

\crefname{constructioncounteralias}{Construction}{Constructions}
\newaliascnt{lemmacounteralias}{environmentcounter}

\crefname{lemmacounteralias}{Lemma}{Lemmas}
\newaliascnt{propositioncounteralias}{environmentcounter}

\crefname{propositioncounteralias}{Proposition}{Propositions}
\newaliascnt{corollarycounteralias}{environmentcounter}

\crefname{corollarycounteralias}{Corollary}{Corollaries}
\newaliascnt{theoremcounteralias}{environmentcounter}

\crefname{theoremcounteralias}{Theorem}{Theorems}
\newaliascnt{questioncounteralias}{environmentcounter}

\crefname{questioncounteralias}{Question}{Questions}
\newaliascnt{conjecturecounteralias}{environmentcounter}

\crefname{conjecturecounteralias}{Conjecture}{Conjectures}

\newaliascnt{theoremalphabeticcounteralias}{environmentcounteralphabetic}

\crefname{theoremalphabeticcounteralias}{Theorem}{Theorems}
\newaliascnt{corollaryalphabeticcounteralias}{environmentcounteralphabetic}

\crefname{corollaryalphabeticcounteralias}{Corollary}{Corollaries}

\theoremstyle{definition}
\newtheorem{definition}[definitioncounteralias]{Definition}

\theoremstyle{plain}
\newtheorem{lemma}[lemmacounteralias]{Lemma}
\newtheorem{proposition}[propositioncounteralias]{Proposition}
\newtheorem{corollary}[corollarycounteralias]{Corollary}
\newtheorem{question}[questioncounteralias]{Question}

\newtheorem{theorem}[theoremcounteralias]{Theorem}
\newtheorem{theoremalphabetic}[theoremalphabeticcounteralias]{Theorem}
\newtheorem{corollaryalphabetic}[corollaryalphabeticcounteralias]{Corollary}

\theoremstyle{remark}
\newtheorem{remark}[remarkcounteralias]{Remark}
\newtheorem{example}[examplecounteralias]{Example}

\makeatletter
\def\namedlabel#1#2{\begingroup
	#2
	\def\@currentlabel{#2}
	\phantomsection\label{#1}\endgroup
}
\makeatother

\newcommand{\bR}{\mathbb{R}}

\DeclareMathOperator{\PT}{PT}
\DeclareMathOperator{\RR}{R}
\DeclareMathOperator{\ST}{ST}
\DeclareMathOperator{\CT}{CT}
\DeclareMathOperator{\Pt}{Pt}

\DeclareMathOperator{\Ls}{Ls}
\DeclareMathOperator{\Tpl}{Tpl}
\DeclareMathOperator{\T}{T}
\newcommand{\cL}{\mathcal{L}}
\newcommand{\spa}{\on{span}}
\newcommand{\sma}{\wedge}

\newcommand{\on}[1]{\operatorname{#1}}

\newcommand{\colim}{\on{colim}}
\newcommand{\hocolim}{\on{hocolim}}

\newcommand{\apt}{\mathrm{apt}}

\newcommand{\category}[1]{\mathcal{#1}}

\newcommand{\aut}{\on{Aut}}

\newcommand{\Z}{\mathbb{Z}}

\newcommand{\R}{\mathbb{R}}

\begin{document}
	\begin{abstract}
		The Solomon--Tits theorem says that the poset of proper non-trivial subspaces of a finite-dimensional vector space has realisation equivalent to a wedge of spheres. In this paper we prove a variant of this result for collections of geodesic subspaces of Euclidean, hyperbolic, or spherical geometry, assuming the collection is generated either by points or by hyperplanes. In the third paper of this series of papers, we will combine this with the homological stability theorems from the first paper to compute the homology of groups of scissors automorphisms in these geometries.
	\end{abstract}

	\maketitle

	\vspace{-.5cm} \tableofcontents

	\vspace{-1cm} \section{Introduction}
	
	For a field $k$, the \emph{Tits building} $\T(k^n)$ is the geometric realisation of the poset of non-zero proper linear subspaces of $k^n$, ordered by inclusion. The Solomon--Tits theorem says that this space is homotopy equivalent to a wedge of $(n-2)$-spheres and its reduced homology in degree $(n-2)$ is a free abelian group generated by ``apartment'' classes (see \cite[Theorem 2]{Solomon} and \cite[Chapter IV.5, Theorem 2]{Brown}).
	
	In this paper we prove generalisations of the Solomon--Tits theorem for \emph{Tits complexes} $\T^{\cL}(X^n)$ that are constructed using a collection $\cL$ of complete geodesic subspaces in Euclidean, hyperbolic, or spherical geometry $X^n \in \{E^n, H^n, S^n\}$. For $k$ a subfield of $\R$, the Tits building is an example: there is an isomorphism $\T^{\cL}(S^{n-1}) \cong \T(k^n)$ when we let $\cL$ be the collection of geodesic subspaces of $S^{n-1}$ that arise as intersections $S^{n-1} \cap V$ of the sphere with a non-zero proper linear subspace $V \subseteq \R^n$ admitting a basis in $k^n \subseteq \R^n$. Our interest in such geometric variants of the Solomon--Tits theorem originates in the study of scissors congruence---this is the second paper in a series on this topic \cite{KLMMS-1,KLMMS-3}---but before explaining this in \cref{sec:connection} we will state our results precisely.
	
	\subsection{Polytopal Tits complexes}
	
	Consider the following three \emph{geometries} $X^n \subseteq \mathbb{R}^{n+1}$:
	\begin{enumerate}
		\item (Euclidean) $E^n = \{(x_0,\ldots,x_n) \mid x_0 = 1\}$,
		\item (hyperbolic) $H^n = \{(x_0,\ldots,x_n) \mid -x_0^2+\sum_{i=1}^n x_i^2 = -1 \text{ and }x_0>0\}$,
		\item (spherical) $S^n = \{(x_0,\ldots,x_n) \mid \sum_{i=0}^n x_i^2 = 1\}$.
	\end{enumerate}
	In any one of them, a \emph{geometric subspace} of $X^n$ is the intersection of $X^n$ with a linear subspace in $\R^{n+1}$. This is the same as a connected complete geodesic subspace of $X^n$, except in the case of a 0-dimensional subspace of $S^n$ where it gives a complete geodesic subspace with two components, namely a point and its antipodal point. Each of these geometries also has an associated \emph{isometry group} $I(X) \leq \mathrm{GL}_{n+1}(\mathbb{R})$:
	\begin{enumerate}
		\item $I(E^n) \cong \R^n \rtimes O(n)$ consists of linear maps preserving the form $\sum_{i=1}^n x_i^2$ and the function $x_0$.
		\item $I(H^n) = O(1,n)^+$ consists of linear maps preserving the form $-x_0^2+\sum_{i=1}^n x_i^2$ and the sign of $x_0$.
		\item $I(S^n) = O(n+1)$ consists of linear maps preserving the form $\sum_{i=0}^n x_i^2$.
	\end{enumerate}
	In the first case there is also a \emph{affine transformation group} $A(E^n) \coloneq \smash{(\R_{>0}^*)^n \ltimes I(E^n)}$ where we allow scalings in each of the coordinate directions, isomorphic to $\R^n \rtimes \mathrm{GL}_n(\mathbb{R})$.

	Let $X^n$ be one these geometries and $\cL$ be a collection of non-empty geometric subspaces $\varnothing \subsetneq U \subseteq X^n$ which includes $X^n$ itself. The \emph{$\cL$-Tits complex} is then defined to be geometric realisation of the poset of proper subspaces in $\cL$ under inclusion:
	\[\T^{\cL}(X^n) \coloneq |\cL \setminus \{X^n\}|. \]
	We also consider the following variants, which appear in the study of scissors congruence \cite{malkiewich2022,KLMMS-3}:

	\begin{definition}\label{df:st-and-pt}
	Fix a geometry $X^n$ and a collection $\cL$ of non-empty geometric subspaces of $X^n$ which includes $X^n$ itself.
		\begin{enumerate}
			\item The \emph{suspended Tits complex} $\ST^{\cL}(X^n)$ is defined to be
			\[ \ST^{\cL}(X^n) \coloneq \frac{ \underset{U \in \cL}\hocolim\, * }{ \underset{U \in \cL \setminus \{X^n\}}\hocolim\, * }.\]
			\item The \emph{polytopal Tits complex} $\PT^{\cL}(X^n)$ is defined to be
			\[ \PT^{\cL}(X^n) \coloneq \frac{ \underset{U \in \cL}\hocolim\, U }{ \underset{U \in \cL \setminus \{X^n\}}\hocolim\, U }.\]
		\end{enumerate}
	\end{definition}
	As $X^n$ is terminal in $\cL$, $\hocolim_{U \in \cL} \ast$ is contractible and hence $\ST^{\cL}(X^n)$ is equivalent to the unreduced suspension of $\smash{\T^{\cL}(X^n)}$. In the Euclidean and hyperbolic cases there is an equivalence $\smash{\ST^{\cL}(X^n) \simeq \PT^{\cL}(X^n)}$ since each geometric subspace $U \in \cL$ is contractible, but in the spherical case these complexes are in general not equivalent.

	\subsection{Solomon--Tits theorems and polytope groups}
	Our goal is to give conditions under which the complexes of \cref{df:st-and-pt} are equivalent to wedges of $n$-spheres, and then give a presentation of the $n$th reduced homology group---the analogue of the Steinberg module---equivariantly for the action of subgroups $G_\cL \leq I(X^n)$ (or $G_\cL \leq A(E^n)$) preserving $\cL$ set-wise. We shall consider two types of conditions on $\cL$; to state them, we let $\cL^k \subseteq \cL$ denote the subset of geometric subspaces of dimension $k$, so that $\cL^0 \subseteq \cL$ is the collection of points in $\cL$ (or, by convention, antipodal pairs of points in the spherical case), and $\cL^{n-1} \subseteq \cL$ is the collection of hyperplanes in $\cL$.

	\begin{definition}\label{def:generated_by}\,
		\begin{enumerate}
			\item We say $\cL$ is \emph{generated by points} if it is equal to the set of all possible spans of finite collections of points in $\cL^0$, other than $\varnothing$. (This implies that $\cL^0$ contains $(n+1)$ points in general position, since $X^n \in \cL$.)
			\item We say $\cL$ is \emph{generated by hyperplanes} if it is equal to the set of all possible intersections of finitely many hyperplanes in $\cL^{n-1}$, other than $\varnothing$. (This need not imply that $\cL^{n-1}$ has $(n+1)$ hyperplanes in general position; $X^n \in \cL$ can be obtained as the empty intersection.)
		\end{enumerate}
		Here the span of a set $S$ is the smallest geometric subspace containing $S$. In other words, it is the linear span in $\R^{n+1}$, intersected with $X^n$ to give a geometric subspace of $X^n$.
	\end{definition}
	
	\begin{example}If $k \subseteq \R$ is a subfield, we can take $\cL$ to be the collection of subspaces of $E^n$, $H^n$, or $S^n$ defined by linear equations in $\R^{n+1}$ with coefficients in $k$. Then $\cL$ is generated by hyperplanes and generated by points. If $k \subseteq \R$ only a subring, such as $\Z$, then $\cL$ as defined above is still generated by hyperplanes, but in general is not generated by points.\end{example}

	The easier case is when $\cL$ is generated by points, and then $\ST^\cL(X^n)$ is equivalent to a wedge of $n$-spheres and its $n$th reduced homology group admits a description analogous to Lee--Szczarba's for the Steinberg module in terms of simplices \cite[Theorem 3.1]{LeeSzczarba}.

	\begin{theoremalphabetic}\label{thm:generated-by-points} If $\cL$ is generated by points, then $\ST^\cL(X^n)$ is equivalent to a wedge of $n$-spheres and there is an isomorphism of $\Z[G_\cL]$-modules 
		\[\Ls^\cL(X^n) \overset{\cong}\longrightarrow \widetilde{H}_n(\ST^\cL(X^n)),\]
	whose domain is the restricted Lee--Szczarba group of \cref{def:lee-szczarba-group}.\end{theoremalphabetic}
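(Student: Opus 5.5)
Since $\ST^\cL(X^n)$ is the unreduced suspension of $\T^\cL(X^n)$, it suffices to show that $\T^\cL(X^n)$ is homotopy equivalent to a wedge of $(n-1)$-spheres. Then $\widetilde H_n(\ST^\cL(X^n))\cong \widetilde H_{n-1}(\T^\cL(X^n))$, and it remains to identify this top homology group with the restricted Lee--Szczarba group.

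\textbf{Step 1: the homotopy type.} I would compare the poset $\cL\setminus\{X^n\}$ with a simplicial complex $K$ built from points. The vertices of $K$ are the points of $\cL^0$ (antipodal pairs in the spherical case), and a finite set of vertices is a simplex if its span is a proper subspace. Because $\cL$ is generated by points, the span map from simplices of $K$ to $\cL\setminus\{X^n\}$ is well defined and surjective. I would check that this map is a homotopy equivalence using Quillen's fibre lemma. The fibre over $U$ consists of the simplices with span contained in $U$, i.e.\ the full simplex on $\cL^0\cap U$. This is contractible, and it is non-empty because $U$ is spanned by points of $\cL^0$.

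Next I would show that $K$ is $(n-2)$-connected and has dimension at most $n-1$ up to homotopy. For the connectivity I would use a link argument in the style of Lee--Szczarba or Quillen's proof of Solomon--Tits. Fix a subspace, or induct on $|\cL^0|$ by adding one point $p$ at a time. The link of $p$ is the analogous complex for the geometry of directions at $p$, namely the quotient $\R^{n+1}/\langle p\rangle$, which yields a spherical geometry of dimension $n-1$. The projections of $\cL^0$ to this quotient again form a collection generated by points, so induction on $n$ applies.

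The connectivity of $\T^\cL$ can alternatively be obtained directly from the poset. Since $\cL$ is closed under spans but not under intersections, I would use the join/span operation and show that for each $U$ the upper link $\cL_{>U}\setminus\{X^n\}$ is again a Tits complex of the same type in the quotient geometry. Combined with $\dim \T^\cL = n-1$ (chains have length at most $n$), this gives a wedge of $(n-1)$-spheres by Hurewicz and Whitehead. The case $n\le 1$ has to be checked by hand.

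\textbf{Step 2: top homology.} $\widetilde H_{n-1}(K)$ is the kernel of the boundary map on $(n-1)$-chains, since $K$ has no non-degenerate $n$-simplices of proper span. Following Lee--Szczarba, I would define the map $\Ls^\cL(X^n)\to \widetilde H_n(\ST^\cL)$ by sending a generator, i.e.\ $n+1$ points of $\cL^0$ in general position, to the boundary sphere of the corresponding $n$-simplex. This is an $(n-1)$-cycle in $K$, and the map is manifestly $G_\cL$-equivariant. The relations in the restricted Lee--Szczarba group are the alternating-sum relation for $n+2$ points and degeneracy, and these are exactly what is needed for the map to be well defined.

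For bijectivity I would follow the Lee--Szczarba argument. Compare $K$ with the full simplex $\Delta$ on $\cL^0$, which is contractible. Then $\widetilde H_{n-1}(K)\cong H_n(\Delta^{(n)}\cup K, K)$, which is presented by the $n$-simplices of full span (general-position $(n+1)$-tuples) modulo the boundaries of the $(n+1)$-simplices. This matches the presentation of $\Ls^\cL$.

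\textbf{Main obstacle.} I expect the main difficulty to be Step 1's connectivity, specifically making the induction work uniformly for the three geometries. In the spherical case, points are antipodal pairs. In the hyperbolic case, spans of points must meet $H^n$. The presentation in Step 2 depends on the exact form of the restricted Lee--Szczarba group, so it may need adjusting once that definition is in hand.
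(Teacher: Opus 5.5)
Your overall architecture matches the paper's. Your $K$ is exactly $\Tpl(\cL^0)^{n-1}$. Your Quillen fibre lemma argument, whose fibres are full simplices on $\cL^0\cap U$, is the same comparison the paper makes via $\hocolim_{U}\Tpl^U(\cL^0)$. Computing the top homology through the pair $(\Delta,K)$, with $\Delta=\Tpl(\cL^0)$ contractible, is how the paper obtains the Lee--Szczarba presentation; the paper phrases it as $\Delta/K\simeq\ST^\cL(X^n)$.

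However, the step you flag as the main obstacle is immediate, and you are missing the observation that makes it so. Any $\le n$ points of $\cL^0$ span a subspace of dimension $\le n-1$, so $K$ contains the whole $(n-1)$-skeleton $\Delta^{(n-1)}$. Hence $K$ is obtained from the $(n-2)$-connected complex $\Delta^{(n-1)}$ by attaching cells of dimension $\ge n$, so $K$ is itself $(n-2)$-connected. Equivalently, $\Delta/K$ has non-basepoint cells only in dimensions $\ge n$, while the model of $\ST^\cL(X^n)$ has cells only in dimensions $\le n$. That is the paper's entire proof of the homotopy type, and no link induction is needed.

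Your induction could be made to work, but as written it has loose ends:
\begin{enumerate}
\item The link of $p$ has the old points as vertices, and several of them can project to the same direction at $p$. So the link is only homotopy equivalent, by another fibre-lemma argument, to the quotient Tits complex.
\item $\cL^0$ may be infinite, so you need a compactness or colimit step.
\item The upper-link alternative does not by itself give global connectivity.
\end{enumerate}

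Two claims in Step 2 are false and should be corrected.
\begin{enumerate}
\item $K$ does have non-degenerate $n$-simplices of proper span: any $n+1$ distinct points on a common hyperplane form one, for example three collinear points in $E^2$. So $\widetilde H_{n-1}(K)$ is not the full cycle group in degree $n-1$. The boundaries of these simplices are exactly why $\Ls^\cL(X^n)$ kills tuples lying in a proper subspace.
\item $H_n(\Delta^{(n)}\cup K,K)$ is free on the general-position $(n+1)$-sets with no relations, because this pair has no relative $(n+1)$-cells. The group you want is $H_n(\Delta,K)\cong\widetilde H_{n-1}(K)$, or equivalently $H_n(\Delta^{(n+1)}\cup K,K)$. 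Its relative cellular chains are zero in degree $n-1$ and have full-span $(n+1)$-sets in degree $n$. The relations are the boundaries of full-span $(n+2)$-sets, with faces lying in $K$ set to zero. This is precisely the presentation of $\Ls^\cL(X^n)$.
\end{enumerate}
With these corrections, and the skeleton observation replacing your link induction, your argument proves the statement.
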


	The harder case, and the one most relevant for scissors congruence, is when $\cL$ is generated by hyperplanes. In this case, under a mild admissibility condition that depends on the geometry (see \cref{admissible-en,admissible-hn,admissible-sn}), $\PT^\cL(X^n)$ is equivalent to a wedge of $n$-spheres and its $n$th reduced homology group admits a description in terms of convex $\cL$-polytopes.

	\begin{theoremalphabetic}\label{thm:generated-by-hyperplanes} If $\cL$ is generated by hyperplanes and is admissible, then $\PT^\cL(X^n)$ is equivalent to a wedge of $n$-spheres and there is an isomorphism of $\Z[G_\cL]$-modules 
		\[\Pt^\cL(X^n) \overset{\cong}\longrightarrow \widetilde{H}_n(\PT^\cL(X^n)),\]
	whose domain is the restricted polytope group of \cref{def:polytope-group}.
	\end{theoremalphabetic}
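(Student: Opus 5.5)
We will prove \cref{thm:generated-by-hyperplanes} by induction on $n$, by filtering $\PT^\cL(X^n)$ according to hyperplanes, in the spirit of the classical proof of Solomon--Tits via an ordering of subspaces. The first step is to replace the homotopy colimit over the poset $\cL \setminus \{X^n\}$ by the union $\bigcup_{U \in \cL\setminus\{X^n\}} U \subseteq X^n$. Since $\cL$ is closed under non-empty intersections, every point lies in a smallest member of $\cL$. So the projection from the homotopy colimit to this union has contractible point-fibres; in the spherical case the fibres are the realisations of posets with a minimum. Hence, after the standard comparison for closed covers, $\PT^\cL(X^n)$ is the cofibre of $Y_\cL \coloneq \bigcup_{H \in \cL^{n-1}} H \hookrightarrow X^n$. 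Everything here is equivariant for $G_\cL$.

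We then show that $X^n/Y_\cL$ is a wedge of $n$-spheres. This is clear if $\cL^{n-1}$ is finite and the hyperplanes cut $X^n$ into bounded open cells: the quotient is then a wedge of spheres indexed by the bounded chambers, that is, by convex $\cL$-polytopes. In general we write $\cL$ as a filtered union of finitely-generated sub-collections $\cL_\alpha$, each generated by finitely many hyperplanes. Homotopy colimits commute with filtered colimits, so
\[\PT^\cL(X^n) \simeq \hocolim_\alpha\, X^n/Y_{\cL_\alpha}.\]
For finite arrangements the quotient is a wedge of $n$-spheres (one per bounded chamber) together with contributions from unbounded chambers. This is where admissibility enters. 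In $E^n$ and $H^n$ an unbounded chamber contributes a contractible piece, since it deformation retracts to its boundary. In $S^n$ we need the hyperplanes to have empty common intersection, or an analogous condition, so that each chamber is a proper convex polytope. In the cofinal system of admissible finite subarrangements, the quotient is then exactly $\bigvee_{\text{chambers}} S^n$, and the maps in the system are wedges of pinches and inclusions. A filtered colimit of wedges of $n$-spheres along such maps has reduced homology only in degree $n$, and that homology is free: it is a subgroup of a free group, but a direct argument via the chamber bases is cleaner. It is also simply connected for $n \ge 2$, so the Hurewicz and Whitehead theorems give a wedge of $n$-spheres. Small $n$ is checked by hand.

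For the identification, we define a map $\Pt^\cL(X^n) \to \widetilde H_n(\PT^\cL(X^n))$ sending a convex $\cL$-polytope $P$ to the fundamental class of $P/\partial P$, pushed forward into $X^n/Y_\cL$. This respects the cut relations of \cref{def:polytope-group}, because a fundamental class is additive under subdivision along a hyperplane. It is also equivariant. Surjectivity follows from the chamber description at each finite stage. For injectivity, at each finite stage the chamber classes form a basis. Any relation in homology therefore holds at some finite stage, and there it expresses an equality of chamber decompositions that already follows from the scissors relations, since polytopes cut by a common arrangement decompose into chambers.

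The main obstacle we expect is the admissibility and unbounded-chamber analysis, especially in $S^n$ and for ideal or unbounded pieces in $H^n$. There we must check carefully that the finite-stage quotients really are wedges of spheres indexed by polytopes, and that the transition maps match the cut relations.
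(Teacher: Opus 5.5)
\textbf{The gap is in the hyperbolic case.} You assert that an unbounded chamber of a finite arrangement in $H^n$ ``contributes a contractible piece, since it deformation retracts to its boundary''. This is false, even when the arrangement has vertices, because an unbounded hyperbolic chamber can have disconnected boundary.

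Here is a counterexample. In the Klein model of $H^2$ take the chords $L_1=\{x=-\tfrac12\}$, $L_2=\{x=\tfrac12\}$, $L_3=\{y=-2x-1\}$ and $L_4=\{y=-2x+1\}$. The only intersection points inside the disc are $p=L_1\cap L_3=(-\tfrac12,0)$ and $q=L_2\cap L_4=(\tfrac12,0)$. Let $C$ be the chamber lying between $L_1,L_2$ and between $L_3,L_4$. Its boundary in $H^2$ is two disjoint broken geodesics, one through $p$ and one through $q$, so $\bar C/\partial C\simeq S^1$. Correspondingly $\T^\cL(H^2)$ has two contractible components $\{L_1,p,L_3\}$ and $\{L_2,q,L_4\}$, and $\PT^\cL(H^2)\simeq S^1$.

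This configuration is a finite sub-arrangement of an admissible $\cL$, for example all lines with rational Klein equations. So your finite-stage description of $H^n/Y_\alpha$ as a wedge of $n$-spheres indexed by bounded chambers is wrong there. Two further points:
\begin{itemize}
\item No finite sub-collection is admissible in the sense of \cref{admissible-hn}: finitely many hyperplanes give only finitely many bounded convex polytopes, so no exhaustion exists. Hence there is no cofinal system of admissible finite sub-arrangements to pass to in $H^n$.
\item The remark after \cref{admissible-hn} shows the conclusion fails under $\cL^0\neq\varnothing$ alone. So no argument that ignores the exhaustion $A_1\subseteq A_2\subseteq\cdots$ can work. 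The homology carried by unbounded chambers at finite stages really exists, and you must show it dies in the colimit.
\end{itemize}

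\textbf{The missing idea is to localize to the polytopes $A_i$}, as the paper does with $\T^{\cL|A}$ and \cref{main-step-hn}. In your language: let $A$ be a convex $\cL$-polytope and $\alpha$ a finite sub-arrangement containing its facet hyperplanes. Then $A/(A\cap Y_\alpha)$ is a genuine wedge of $n$-spheres indexed by the chambers of $\alpha$ inside $A$, all of which are bounded. It models $\ST^{\cL_\alpha|A}(H^n)$, because each $U\cap A$ is convex and hence contractible. You then use $\T^\cL(H^n)=\colim_i\T^{\cL|A_i}(H^n)$, which is exactly where the exhaustion enters.

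With that modification, your chamber-decomposition argument is a legitimate alternative to the paper's proof. The paper instead inducts on $n$, adding hyperplanes one at a time via Zorn's lemma, the pushout of Tits complexes, \cref{pt_quotient} and the five lemma. Your argument also works as stated in the other two geometries:
\begin{itemize}
\item In $E^n$, finite sub-arrangements of rank $n$ have line-free chambers, and for those the unbounded pieces $\bar C/\partial C$ really are contractible.
\item In $S^n$, all chambers are strongly convex.
\end{itemize}
Your route gives the suspended statement directly, but unlike the paper it says nothing about $\T^\cL$ itself.

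Finally, restrict your first paragraph to finite stages. For infinite $\cL$ the union $Y_\cL$ is typically dense and not closed (again, take all rational hyperplanes). The cover by hyperplanes is then not locally finite, so the closed-cover comparison does not apply to the literal quotient $X^n/Y_\cL$.
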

	
	\begin{remark}Though \cref{thm:generated-by-points} and \cref{thm:generated-by-hyperplanes} concern the different complexes $\ST^\cL(X^n)$ and $\PT^\cL(X^n)$, the complexes are equivalent in the Euclidean and hyperbolic case, and so we get two results about the same complex, holding under different assumptions. Moreover, if $\cL$ is generated by both points and hyperplanes then the polytope and Lee--Szczarba groups are isomorphic, $\Pt^\cL(X^n) \cong \Ls^\cL(X^n)$ (\cref{lem:gen-points-pt-to-ls}). When $\cL$ is only generated by hyperplanes, this is not necessarily the case---the Lee--Szczarba group gives the wrong answer because it only ``knows about simplices,'' and often $\cL$-polytopes can not be decomposed into $\cL$-simplices. For instance, this is true in the ``rectangular'' examples discussed in \cite{KLMMS-3}.
	\end{remark}
	
	In the spherical case, we prove that $\Ls^\cL(S^n)$ is the quotient of $\Pt^\cL(S^n)$ by suspensions of polytopes when $\cL$ is generated by hyperplanes and by points (\cref{lem:gen-points-pt-to-ls}). We also get a result for $\ST^\cL(S^n)$ when $\cL$ is generated by hyperplanes and admissible (\cref{admissible-sn}). In that case, we replace $\cL$ by its set of non-empty orthogonal complements $\cL^\perp$ (with $S^n \in \cL^\perp$ by convention). The admissibility condition says that the intersection of all the hyperplanes in $\cL$ is empty, and therefore the span of all the points in $\cL^\perp$ is all of $S^n$, so that $\cL^\perp$ is generated by points. This leads to a Lee--Szczarba presentation of the $n$th reduced homology group of $\ST^\cL(S^n)$ using tuples of hyperplanes in $\cL$ rather than tuples of points.

	\begin{corollaryalphabetic}\label{thm:generated-by-hyperplanes-spherical}
	If $X^n = S^n$ and $\cL$ is generated by hyperplanes and admissible, then $\ST^\cL(S^n)$ is equivalent to a wedge of $n$-spheres and there is an isomorphism of $\Z[G_\cL]$-modules 
	\[ \Ls^{\cL^\perp}(S^n) \overset{\cong} \longrightarrow \widetilde{H}_n(\ST^{\cL^\perp}\hspace{-.1cm}(S^n)) \cong \widetilde{H}_n(\ST^\cL(S^n)).\]
	\end{corollaryalphabetic}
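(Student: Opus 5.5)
The plan is to deduce the corollary from \cref{thm:generated-by-points}, applied to $\cL^\perp$, together with a poset isomorphism relating $\cL$ and $\cL^\perp$.

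\textbf{Step 1: $\cL^\perp$ is generated by points.} For a geometric subspace $U = S^n \cap V$ with $V \subseteq \R^{n+1}$ a linear subspace, put $U^\perp = S^n \cap V^\perp$. This reverses inclusions and exchanges dimension $k$ with dimension $n-1-k$. It sends hyperplanes to antipodal point pairs, and intersections of hyperplanes to spans of the corresponding points, since $(V_1 \cap \dots \cap V_r)^\perp = V_1^\perp + \dots + V_r^\perp$. Because $\cL$ is generated by hyperplanes, $\cL^\perp$ therefore consists exactly of the non-empty spans of points of $(\cL^\perp)^0 = (\cL^{n-1})^\perp$. Two points need care.
\begin{itemize}
\item The empty intersection $S^n \in \cL$ has orthogonal complement the empty set. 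It is excluded, and $S^n$ is put into $\cL^\perp$ by convention.
\item We need $S^n$ to be a span of points of $\cL^\perp$, i.e.\ the hyperplanes of $\cL$ must have empty total intersection. This is exactly admissibility.
\end{itemize}
Conversely, the empty span corresponds to $S^n$ itself and is discarded, consistently with the definition. This establishes the hypotheses of \cref{thm:generated-by-points} for $\cL^\perp$.

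\textbf{Step 2: apply \cref{thm:generated-by-points}.} Since $G_\cL \le O(n+1)$ preserves orthogonality, $G_\cL = G_{\cL^\perp}$. \cref{thm:generated-by-points} then gives that $\ST^{\cL^\perp}(S^n)$ is a wedge of $n$-spheres, with a $\Z[G_\cL]$-isomorphism $\Ls^{\cL^\perp}(S^n) \cong \widetilde H_n(\ST^{\cL^\perp}(S^n))$.

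\textbf{Step 3: compare the two Tits complexes.} The map $U \mapsto U^\perp$ is a $G_\cL$-equivariant order-reversing bijection
\[\cL \setminus \{S^n\} \longrightarrow \cL^\perp \setminus \{S^n\}.\]
Its image contains no empty set: every proper $U \in \cL$ is non-empty and lies in $S^n$, so $U^\perp$ is non-empty; and no $U^\perp$ equals $S^n$. A poset and its opposite have the same nerve, so $\T^\cL(S^n) \cong \T^{\cL^\perp}(S^n)$ equivariantly. Since $\ST$ is equivalent to the unreduced suspension of $\T$, naturally in $\cL$, we get a $G_\cL$-equivariant equivalence $\ST^\cL(S^n) \simeq \ST^{\cL^\perp}(S^n)$.

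I expect the main obstacle to be this last identification. One has to check that the isomorphism on $\widetilde H_n$ is genuinely $G_\cL$-equivariant; note that the nerve of the opposite poset reverses simplex orientation, but this is a natural homeomorphism and commutes with the action. One also has to check that the hocolim model of $\ST$ and the suspension model agree equivariantly. I would handle both by writing $\ST$ as the mapping cone of $\T \to *$, which is natural in the poset.

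Combining Steps 2 and 3 gives the corollary.
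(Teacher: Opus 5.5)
Your proposal is correct and follows the paper's proof essentially step for step. Like the paper, you dualise via $U \mapsto U^\perp$ to see that $\cL^\perp$ is generated by points exactly when $\cL$ is generated by hyperplanes and admissible, apply \cref{thm:generated-by-points} to $\cL^\perp$, and transport the result along the order-reversing isomorphism $\T^\cL(S^n) \cong \T^{\cL^\perp}(S^n)$. The obstacle you anticipate does not really arise: in the Bousfield--Kan model $\CT$ is literally the simplicial cone on $\T$ with cone point $S^n$, so this isomorphism directly induces a $G_\cL$-equivariant homeomorphism $\ST^\cL(S^n) \cong \ST^{\cL^\perp}(S^n)$.
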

	
	\subsection{Connection to scissors congruence and scissors automorphism groups}	\label{sec:connection}
	Our interest in geometric variants of the Solomon--Tits theorem---\cref{thm:generated-by-points} and \cref{thm:generated-by-hyperplanes}---originates in the subject of scissors congruence, which considers polytopes in a geometry $X^n$ up to moves that cut them into finitely many pieces and then rearrange the pieces. For a fixed polytope $P \subseteq X^n$, these cut-and-paste transformations define a group $\aut(P)$, the \emph{scissors automorphism group} of $P$. Examples include the groups of invertible piecewise isometries considered previously in the dynamics literature, see e.g.\ \cite{ag_pwi_pentagon_example,bp_pwi_classification,car_pwi_construction}. 
	
	We introduced these groups in the prequel \cite[Definition 1.1 and Definition 2.18]{KLMMS-1}, and explained they are to scissors congruence K-theory $K(\category{X}^n)$ of Zakharevich \cite{zakharevich2012, zakharevich2014} as the general linear groups are to the classical algebraic K-theory $K(R)$ of a ring \cite{quillen73, weibel2013}. From this perspective, the present work provides the geometric input required to generalize the computational tools that the third-named author developed for $K(\category{X}^n)$ \cite{malkiewich2022} to the setting of $\cL$-scissors congruence $K(\category{X}^{\cL}_G)$---the theory obtained by restricting to $\cL$-polytopes and allowing cut-and-paste operations in a group $G$ preserving $\cL$ set-wise. 
	
	More precisely, in the final paper of this series \cite{KLMMS-3} we establish the following generalization of the technical core of \cite{malkiewich2022}: in the setting of \cref{thm:generated-by-hyperplanes} (and \cref{thm:generated-by-hyperplanes-spherical}), $K(\category{X}^{\cL}_G)$ is the Thom spectrum of a bundle on $\smash{\PT^\cL(X^n)}$,
	\[K(\category{X}^{\cL}_G) \simeq \smash{\PT^\cL(X^n)}^{-TX^n}
	_{hG}.\]
	This description of $K(\category{X}^{\cL}_G)$ allows us to access the groups $K_i(\category{X}^{\cL}_G)$ via the spectrum homology $H_i(K(\category{X}^{\cL}_G)) = H_i(G;\smash{\Pt^\cL(X^n)^t})$. The assembler $\category{X}^{\cL}_G$ will frequently fit into the axiomatic framework of \cite[Section 3.1]{KLMMS-1}, so that for non-empty $\cL$-polytopes $P$, the homological stability result of \cite[Corollary 4.5]{KLMMS-1} shows that there is an acyclic map
	\[ B\aut_{\category{X}^{\cL}_G}(P) \longrightarrow \Omega^\infty_{[P]} K(\category{X}^{\cL}_G),\]
	where the subscript $[P]$ denotes the path component of $P$. Thus we can use scissors congruence K-theory to calculate the group homology of the scissors automorphism group $\smash{\aut_{\category{X}^{\cL}_G}(P)}$. This is used to determine the (co)homology of many families of scissors automorphism groups, recovering and extending results by Szymik--Wahl \cite{szymikwahl2019}, Li \cite{li2022} and Tanner \cite{tanner2023}.

	\subsection*{Acknowledgments}

	AK acknowledges the support of the Natural Sciences and Engineering Research Council of Canada (NSERC) [funding reference number 512156 and 512250]. AK is supported by an Alfred P.~Sloan Research Fellowship. CM was partially supported by the National Science Foundation (NSF) grants DMS-2005524, DMS-2052923, and DMS-2506430, a Simons Fellowship, and a Simons Travel Support for Mathematicians grant. JM was partially supported National Science Foundation (NSF) grants DMS-2202943 and DMS-2504473 as well as a Simons Foundation Travel Support for Mathematicians grant. RJS was supported by NSERC Discovery Grant A4000 as well as by the German Research Foundation through SFB 1442 -- 427320536, Geometry: Deformations and Rigidity, and EXC 2044 -- 390685587, Mathematics M\"unster: Dynamics--Geometry--Structure.

	The authors would also like to thank the organizers and participants of the Summer school on Scissors Congruence, Algebraic K-theory, and Trace Methods at IU Bloomington in June 2023 for conversations that inspired them to begin working on this paper.

	\section{Generation by points} In this section we prove \cref{thm:generated-by-points}. This says that $\ST^\cL(X^n)$ is equivalent to a wedge of $n$-spheres and that $\smash{\Ls^\cL(X^n) \cong \widetilde{H}_n(\ST^\cL(X^n))}$, if $\cL$ is \emph{generated by points}---that is, $\cL$ is equal to the set of spans of finite collections of points in $\cL^0$, other than $\varnothing$. We emphasise that, if $\cL$ generated by points, our assumption that $X^n \in \cL$ is thus equivalent to assuming that $\cL^0$ contains $(n+1)$ points in general position. We begin with this case because the proof strategy is well-known, see e.g.~\cite[Theorem 3.5]{dupont_book}, \cite[Corollary A.7 and Theorem 1.20]{cz}, \cite[Lemma 2.2]{calegari2022}, and \cite[Theorem 2.10]{malkiewich2022}.

	\subsection{The homotopy type of \texorpdfstring{$\ST^{\cL}(X^n)$}{ST\unichar{"005E}\unichar{"1D4DB}(X\unichar{"005E}n)}}  For the remainder of this section we fix a non-empty $\cL$ which is generated by points. We begin by analysing how the space $\ST^{\cL}(X^n)$ decomposes as a contractible space modulo a union of contractible spaces, one for each proper non-empty subspace of $X^n$. Recall the definition
		\[ \ST^{\cL}(X^n) \coloneq \frac{ \underset{U \in \cL}\hocolim\, * }{ \underset{U \in \cL \setminus \{X^n\}}\hocolim\, * }\]	
	where we take the homotopy colimit in unbased spaces. Concretely, we fix the model of this space obtained from the Bousfield--Kan formula \cite[Chapter XII]{BousfieldKan}: it is obtained by first forming the simplicial complex $\CT^{\cL}(X^n)$ with a vertex for each subspace $U \in \cL$ and a $p$-simplex for each flag $\varnothing \subsetneq U_0 \subsetneq U_1 \subsetneq \cdots \subsetneq U_p \subseteq X^n$. Then we collapse to a point the subcomplex $\smash{T^\cL(X^n) \subseteq \CT^\cL(X^n)}$ consisting of all simplices in which $U_p \neq X^n$.

	\begin{definition}\,
		\begin{itemize}
			\item For any set $S$, let $\Tpl(S)$ denote the full simplicial complex on $S$. It has a vertex for each element of $S$, and every finite set of vertices spans a simplex.
			\item Let $\Tpl(\cL^0)^{n-1} \subseteq \Tpl(\cL^0)$ be the subcomplex of those simplices that lie entirely a subspace $U \subsetneq X^n$ of dimension at most $n-1$.
			\item For each $U \in \cL \cup \{\varnothing\}$, let $\Tpl^U(\cL^0) \subseteq \Tpl(\cL^0)$ be the (full) subcomplex of those simplices that are contained in $U$. When $U = \varnothing$, this is an empty complex.
		\end{itemize}
	\end{definition}

	Note that when $U$ is not all of $X^n$, $\Tpl^U(\cL^0) \subset \Tpl(\cL^0)^{n-1}$, and in fact $\Tpl(\cL^0)^{n-1}$ is the union of all such subcomplexes $\Tpl^U(\cL^0)$. Moreover, for every $U,V \in \cL$, if we let $U \cap^\cL V$ be the maximum subspace in $\cL \cup \{\varnothing\}$ contained in $U \cap V$---which is well-defined because $\cL$ is generated by points---then we have
	\[\Tpl^U(\cL^0) \cap \Tpl^V(\cL^0) = \Tpl^{U \cap^\cL V}(\cL^0).\]
	We can thus write $\Tpl(\cL^0)^{n-1}$ as a homotopy colimit
	\[\underset{U \in \cL \setminus \{X^n\}}{\hocolim}\,\Tpl^U(\cL^0) \overset{\simeq}\longrightarrow \Tpl(\cL^0)^{n-1}.\]
	Now observe that $\Tpl(\cL^0)$ is contractible, since any element of $\cL^0$ is a cone point, as is $\Tpl^{U}(\cL^0)$ for each $U \in \cL$. Thus the quotient can be rewritten as
	\begin{align}\label{eqn:st_zigzag}
		\frac{\Tpl(\cL^0)}{\Tpl(\cL^0)^{n-1}}
		& \overset{\simeq}{\longleftarrow} \frac{ \Tpl(\cL^0)  }{ \underset{U \in \cL \setminus \{X^n\}}\hocolim\, \Tpl^U(\cL^0) }
		 \overset{\simeq}{\longrightarrow} \frac{ \underset{U \in \cL}\hocolim\, * }{ \underset{U \in \cL \setminus \{X^n\}}\hocolim\, * }
		\simeq \ST^\cL(X^n).
	\end{align}

	\begin{corollary}\label{steinberg_case_1}
		If $\cL$ is generated by points, $\ST^\cL(X^n)$ is equivalent to a wedge of $n$-spheres.
	\end{corollary}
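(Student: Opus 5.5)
By the zigzag \eqref{eqn:st_zigzag}, it suffices to show that the quotient $\Tpl(\cL^0)/\Tpl(\cL^0)^{n-1}$ is equivalent to a wedge of $n$-spheres. Since $\Tpl(\cL^0)$ is contractible, this quotient is equivalent to the unreduced suspension of $\Tpl(\cL^0)^{n-1}$. So the plan is to show that $\Tpl(\cL^0)^{n-1}$ is equivalent to a wedge of $(n-1)$-spheres.

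\textbf{Step 1: relative skeleta.} First observe that every simplex of $\Tpl(\cL^0)$ with at most $n$ vertices (dimension $\le n-1$) lies in $\Tpl(\cL^0)^{n-1}$. Indeed, the span of at most $n$ points of $\cL^0$ is a geometric subspace of dimension $\le n-1$. That span lies in $\cL$ because $\cL$ is generated by points, and it is proper. In the spherical case, where points of $\cL^0$ are antipodal pairs, $k$ of them span a linear subspace of $\R^{n+1}$ of dimension at most $k \le n$, so the same holds. Hence the $(n-1)$-skeleton of $\Tpl(\cL^0)$ is contained in $\Tpl(\cL^0)^{n-1}$. It follows that the pair $(\Tpl(\cL^0),\Tpl(\cL^0)^{n-1})$ is relatively $(n-1)$-connected, so the quotient $Q \coloneq \Tpl(\cL^0)/\Tpl(\cL^0)^{n-1}$ is $(n-1)$-connected.

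\textbf{Step 2: homology above degree $n$.} Next I would show $\widetilde H_i(Q)=0$ for $i>n$, by showing that $\Tpl(\cL^0)^{n-1}$ has vanishing reduced homology above degree $n-1$. I would use the homotopy colimit description
\[\underset{U \in \cL \setminus \{X^n\}}{\hocolim}\,\Tpl^U(\cL^0) \simeq \Tpl(\cL^0)^{n-1}\]
established above, together with the fact that each $\Tpl^U(\cL^0)$ is contractible. These give $\Tpl(\cL^0)^{n-1} \simeq \hocolim_{U\in\cL\setminus\{X^n\}} * = \T^\cL(X^n)$. The poset $\cL\setminus\{X^n\}$ has chains of length at most $n$, since dimensions strictly increase along a chain and range over $0,\dots,n-1$. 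Its realisation is therefore a simplicial complex of dimension $\le n-1$, and so has no homology above degree $n-1$. Consequently $Q\simeq \Sigma\,\T^\cL(X^n)$ has homology concentrated in degree $n$.

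\textbf{Step 3: conclusion.} For $n\ge 2$, $Q$ is simply connected, and its reduced homology is free and concentrated in degree $n$. The homology in degree $n$ is free because it is the top cellular homology, a subgroup of a free chain group, after using the $n$-dimensional model $\Sigma\T^\cL(X^n)$. By the Hurewicz and Whitehead theorems, a map from a wedge of $n$-spheres realising a basis of $\widetilde H_n$ is then an equivalence. For $n=1$ the complex $\T^\cL(X^1)$ is a discrete set of points, so its unreduced suspension is visibly a wedge of circles. For $n=0$, $Q$ is $S^0$ or a wedge of $S^0$'s directly.

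I expect the main obstacle to be Step 1's verification in the spherical case, together with the bookkeeping that identifies the zigzag in \eqref{eqn:st_zigzag} with the unreduced suspension. Connectivity comes cheaply from the skeleton inclusion, and dimension from the poset length, so there is no genuinely hard step.
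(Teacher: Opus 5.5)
Your proof is correct and is essentially the paper's argument. The paper likewise combines two facts: the quotient $\Tpl(\cL^0)/\Tpl(\cL^0)^{n-1}$ has non-basepoint cells only in dimensions $\geq n$, and the model for $\ST^\cL(X^n)$ has cells only in dimensions $\leq n$. You additionally spell out what the paper leaves implicit: the Hurewicz/Whitehead step, the freeness of the top homology, and the separate treatment of $n \leq 1$.
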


	\begin{proof}
		This argument is similar to \cite[Theorem 2.10]{malkiewich2022}. Our fixed model for $\ST^\cL(X^n)$ only has cells of dimension $\leq n$, while the quotient $\Tpl(\cL^0) / \Tpl(\cL^0)^{n-1}$ only has (non-basepoint) cells of dimension $\geq n$. Since these are equivalent, they must be equivalent to a wedge of $n$-spheres.
	\end{proof}

	\subsection{The restricted Lee--Szczarba group and apartment-like maps}

	Next we define the restricted Lee--Szczarba group $\Ls^\cL(X^n)$ and prove that it maps to the reduced $n$th homology group of $\ST^\cL(X^n)$.

	\begin{definition}\label{def:lee-szczarba-group}
		The \emph{restricted Lee--Szczarba group} is the free abelian group on the $(n+1)$-tuples of elements of $\cL^0$, modulo the relation $\sum_{i=0}^{n+2} (-1)^i (y_0,\ldots,\hat{y_i},\ldots,y_{n+1}) = 0$ for each $(n+2)$-tuple $(y_0,\ldots,y_{n+1})$ and modulo all tuples lying in a subspace $U \subsetneq X^n$ of dimension at most $n-1$:
		\[ \Ls^\cL(X^n) = \frac{ \Z\langle (x_0,\ldots,x_n) \mid x_i \in \cL^0 \rangle }{ (x_0,\ldots,x_n) \in U \subsetneq X^n, \ \sum_{i=0}^{n+2} (-1)^i (y_0,\ldots,\hat{y_i},\ldots,y_{n+1}) }.\]
		We make it a $\Z[G_\cL]$-module by $g \cdot (x_0,\ldots,x_n) = (gx_0,\ldots,gx_n)$.
	\end{definition}

	\begin{remark}\label{rem:lee-szczarba-group}\,
		\begin{itemize}
			\item In \cref{def:lee-szczarba-group} repetitions are allowed, as is any ordering on the points. Alternatively, one could assume that repetitions are not allowed and the tuples all have to be ordered using a fixed total ordering on $\cL^0$; this gives an isomorphic group.
			\item In the sphere $S^n$, in \cref{def:lee-szczarba-group} we mean to take elements of $\cL^0$ which are really pairs of antipodal points $\{x,-x\}$.  Therefore, the Lee--Szczarba group $\Ls^\cL(S^n)$ can be identified with the group $\Ls^\cL(\R^{n+1})$ that is generated by $(n+1)$-tuples of lines in general position, modulo the simplicial relation for each $(n+2)$-tuple of lines. However, we \emph{could} take individual points $x \in \{x,-x\} \in \cL^0$ in the definition of $\Ls^\cL(S^n)$, and this gives an isomorphic group. In this latter case, when $-x_i$ denotes the antipode of $x_i \in S^n$, the two tuples
			\[\qquad \qquad (x_0,\ldots,x_{i-1},x_i,x_{i+1},\ldots,x_n), \quad \text{and} \quad (x_0,\ldots,x_{i-1},-x_i,x_{i+1},\ldots,x_n) \]
			represent the same element of the Lee--Szczarba group.
		\end{itemize}
	\end{remark}

	There is a map from the restricted Lee--Szczarba group to the homology of $\ST^\cL(X^n)$, given explicitly in terms of apartment classes. Given an $(n+1)$-tuple of points $(x_0,\ldots,x_n)$ in $X^n$ each permutation $\sigma$ of $\{0,\ldots,n\}$ gives a flag of spans
	\[ \spa(x_{\sigma(0)}) \subseteq \spa(x_{\sigma(0)},x_{\sigma(1)}) \subseteq \cdots \subseteq \spa(x_{\sigma(0)},x_{\sigma(1)},\ldots,x_{\sigma(n-1)}). \]
	(Recall that the span of a set in $X^n$ is the smallest geometric subspace containing that set---it is also the linear span in $\R^{n+1}$, intersected with $X^n$.) Since $\cL$ is generated by points, these spans are all elements of $\cL$. Together these $(n+1)!$ flags assemble to an \emph{apartment map}
	\begin{equation}\label{eqn:apt-map} S^{n-1} \cong |\mathrm{sd}\,\partial \Delta^n| \longrightarrow \T^\cL(X^n).\end{equation} 
	
	\vspace{1em}
	\centerline{
	\def\svgwidth{5in}
\begingroup%
  \makeatletter%
  \providecommand\color[2][]{%
    \errmessage{(Inkscape) Color is used for the text in Inkscape, but the package 'color.sty' is not loaded}%
    \renewcommand\color[2][]{}%
  }%
  \providecommand\transparent[1]{%
    \errmessage{(Inkscape) Transparency is used (non-zero) for the text in Inkscape, but the package 'transparent.sty' is not loaded}%
    \renewcommand\transparent[1]{}%
  }%
  \providecommand\rotatebox[2]{#2}%
  \newcommand*\fsize{\dimexpr\f@size pt\relax}%
  \newcommand*\lineheight[1]{\fontsize{\fsize}{#1\fsize}\selectfont}%
  \ifx\svgwidth\undefined%
    \setlength{\unitlength}{373.01231548bp}%
    \ifx\svgscale\undefined%
      \relax%
    \else%
      \setlength{\unitlength}{\unitlength * \real{\svgscale}}%
    \fi%
  \else%
    \setlength{\unitlength}{\svgwidth}%
  \fi%
  \global\let\svgwidth\undefined%
  \global\let\svgscale\undefined%
  \makeatother%
  \begin{picture}(1,0.28371266)%
    \lineheight{1}%
    \setlength\tabcolsep{0pt}%
    \put(0,0){\includegraphics[width=\unitlength,page=1]{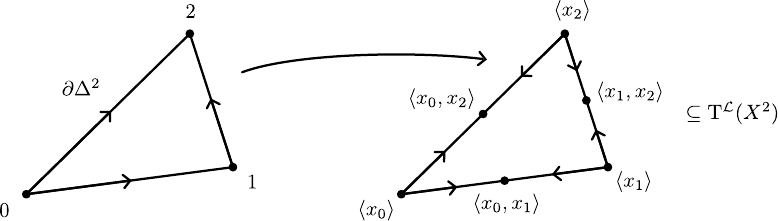}}%
  \end{picture}%
\endgroup%

	}
	\vspace{1em}

	\begin{definition}\label{apartment_st}
	 Coning off \eqref{eqn:apt-map} gives the \emph{suspended apartment map},
	\begin{equation}\label{eqn:susp-apt-map}
		S^n \cong \frac{\Delta^n}{\partial \Delta^n} \longrightarrow \frac{\CT^\cL(X^n)}{\T^\cL(X^n)} = \ST^\cL(X^n)
	\end{equation}
	and the image of the fundamental class of $S^n$ under it is the \emph{apartment class}
	\[ \apt(x_0,\ldots,x_n) \in \widetilde{H}_{n}(\ST^\cL(X^n)). \]
	\end{definition}
	
	\vspace{1em}
	\centerline{
	\def\svgwidth{5in}
\begingroup%
  \makeatletter%
  \providecommand\color[2][]{%
    \errmessage{(Inkscape) Color is used for the text in Inkscape, but the package 'color.sty' is not loaded}%
    \renewcommand\color[2][]{}%
  }%
  \providecommand\transparent[1]{%
    \errmessage{(Inkscape) Transparency is used (non-zero) for the text in Inkscape, but the package 'transparent.sty' is not loaded}%
    \renewcommand\transparent[1]{}%
  }%
  \providecommand\rotatebox[2]{#2}%
  \newcommand*\fsize{\dimexpr\f@size pt\relax}%
  \newcommand*\lineheight[1]{\fontsize{\fsize}{#1\fsize}\selectfont}%
  \ifx\svgwidth\undefined%
    \setlength{\unitlength}{377.71136205bp}%
    \ifx\svgscale\undefined%
      \relax%
    \else%
      \setlength{\unitlength}{\unitlength * \real{\svgscale}}%
    \fi%
  \else%
    \setlength{\unitlength}{\svgwidth}%
  \fi%
  \global\let\svgwidth\undefined%
  \global\let\svgscale\undefined%
  \makeatother%
  \begin{picture}(1,0.28017846)%
    \lineheight{1}%
    \setlength\tabcolsep{0pt}%
    \put(0,0){\includegraphics[width=\unitlength,page=1]{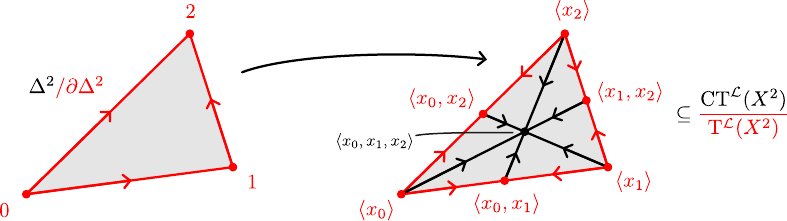}}%
  \end{picture}%
\endgroup%

	}
	\vspace{1em}

	\begin{lemma}\label{st-apt-well-defined}If $\cL$ is generated by points, there is a well-defined map of $\Z[G_\cL]$-modules
		\begin{align*}\Ls^\cL(X^n) &\longrightarrow \widetilde{H}_{n}(\ST^\cL(X^n)) \\
			(x_0,\ldots,x_n) &\longmapsto \apt(x_0,\ldots,x_n).\end{align*}
	\end{lemma}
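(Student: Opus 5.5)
The plan is to factor the apartment class through the relative homology $\widetilde{H}_n(\Tpl(\cL^0)/\Tpl(\cL^0)^{n-1})$, where both relations of \cref{def:lee-szczarba-group} are visibly satisfied at chain level. Throughout I will work with \emph{ordered} simplicial chains, so that tuples with repetitions and arbitrary orderings make sense as chains; this computes the same homology as the geometric realisation.

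\textbf{Step 1: the class $[x_0,\ldots,x_n]$.} An $(n+1)$-tuple $(x_0,\ldots,x_n)$ of elements of $\cL^0$ is an ordered $n$-simplex of $\Tpl(\cL^0)$. Each codimension-one face consists of $n$ points. Their span has dimension at most $n-1$, so it is proper and the face lies in $\Tpl(\cL^0)^{n-1}$. Hence the tuple is a relative cycle and defines a class $[x_0,\ldots,x_n]\in H_n(\Tpl(\cL^0),\Tpl(\cL^0)^{n-1})$.

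The two relations hold for this class:
\begin{itemize}
\item If all $x_i$ lie in a proper $U$, the simplex itself lies in $\Tpl(\cL^0)^{n-1}$, so its class is $0$. This includes every tuple with a repetition.
\item For an $(n+2)$-tuple $(y_0,\ldots,y_{n+1})$, the alternating sum $\sum_{i}(-1)^i(y_0,\ldots,\hat y_i,\ldots,y_{n+1})$ is the boundary of the ordered $(n+1)$-chain $(y_0,\ldots,y_{n+1})$. It is therefore null-homologous in the relative complex.
\end{itemize}
This gives a homomorphism $\Ls^\cL(X^n)\to \widetilde{H}_n(\Tpl(\cL^0)/\Tpl(\cL^0)^{n-1})$. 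It is $G_\cL$-equivariant, since $G_\cL$ acts simplicially on $\Tpl(\cL^0)$ preserving $\Tpl(\cL^0)^{n-1}$.

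\textbf{Step 2: comparison with $\ST^\cL(X^n)$.} Rather than chasing the zigzag \eqref{eqn:st_zigzag} abstractly, I would use the explicit map
\[\Phi\colon \mathrm{sd}\,\Tpl(\cL^0)\longrightarrow \CT^\cL(X^n),\]
which sends a simplex $\tau$ (a vertex of the subdivision) to $\spa(\tau)\in\cL$. This uses that $\cL$ is generated by points. The map $\Phi$ is order-preserving on face posets, possibly collapsing chains to degenerate simplices, so it is a simplicial map. It is $G_\cL$-equivariant because $g\,\spa(\tau)=\spa(g\tau)$. It sends $\mathrm{sd}\,\Tpl(\cL^0)^{n-1}$ into $\T^\cL(X^n)$, since a simplex in a proper subspace has proper span.

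Composing with the subdivision equivalence $\mathrm{sd}\,\Tpl(\cL^0)\simeq\Tpl(\cL^0)$, which is compatible with the subcomplexes, gives a $G_\cL$-equivariant map on quotients. By construction it sends $[x_0,\ldots,x_n]$ to $\Phi_*$ of the subdivided fundamental class of $\Delta^n$. The image of the flag of faces $\{x_{\sigma(0)}\}\subset\{x_{\sigma(0)},x_{\sigma(1)}\}\subset\cdots$ is exactly the flag of spans used in \eqref{eqn:apt-map}, with the top face mapping to $\spa(x_0,\ldots,x_n)$. So $\Phi$ restricted to $\mathrm{sd}\,\Delta^n$ is precisely the suspended apartment map \eqref{eqn:susp-apt-map}, and $\Phi_*[x_0,\ldots,x_n]=\apt(x_0,\ldots,x_n)$.

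\textbf{Conclusion and main obstacle.} The assignment $(x_0,\ldots,x_n)\mapsto \apt(x_0,\ldots,x_n)$ is the composite of the homomorphism of Step 1 with $\Phi_*$, hence is well defined and $G_\cL$-equivariant. We do not even need $\Phi$ to be an equivalence for this lemma, though it is one by \eqref{eqn:st_zigzag}, which will matter later.

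The point requiring most care is the case where the tuple spans a proper subspace. There the apartment map \eqref{eqn:susp-apt-map} must be read with possibly degenerate flags, and its top vertex $\spa(x_0,\ldots,x_n)$ lies in $\T^\cL(X^n)$. I would check directly that the whole image then lies in the collapsed subcomplex, so the class is $0$, consistent with Step 1. A second point of care is matching the orientation and sign conventions of ordered chains with the fundamental class of $\Delta^n/\partial\Delta^n$.
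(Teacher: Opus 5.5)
Your proof is correct in substance, but it takes a different route from the paper's. The paper checks the two relations directly on apartment classes in $\widetilde H_n(\ST^\cL(X^n))$:
\begin{itemize}
\item For the simplicial relation, an $(n+2)$-tuple gives a map $\Delta^{n+1}/\mathrm{sk}_{n-1}\Delta^{n+1}\to\CT^\cL(X^n)/\T^\cL(X^n)$, which exhibits the alternating sum as a boundary.
\item For a tuple in a proper subspace $U$, it argues the suspended apartment map is null-homotopic by factoring through $\ST^{\cL\cap U}(U)$ and invoking \cref{steinberg_case_1}.
\end{itemize}
You instead check both relations at chain level for $(\Tpl(\cL^0),\Tpl(\cL^0)^{n-1})$, where they are tautological. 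You then transport them along the single equivariant simplicial map $\Phi\colon\mathrm{sd}\,\Tpl(\cL^0)\to\CT^\cL(X^n)$, $\tau\mapsto\spa\tau$. This makes equivariance automatic and avoids \cref{steinberg_case_1}. It also front-loads the chain-level computation the paper postpones to \cref{steinberg_case}, where $\Phi$ could replace the zigzag \eqref{eqn:st_zigzag}. If you want that equivalence, do not cite \eqref{eqn:st_zigzag}, which is a different zigzag. Use Quillen's Theorem A instead: for $U\in\cL\setminus\{X^n\}$ the fibre $\{\tau:\spa\tau\subseteq U\}$ is the face poset of the full simplex $\Tpl^U(\cL^0)$, which is contractible. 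The price of your route is that $\Phi|_{\mathrm{sd}\,\Delta^n}$ equals the suspended apartment map only when $x_0,\ldots,x_n$ span $X^n$.

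That is exactly where your plan needs a correction. Take a tuple spanning $U=\spa(x_0,\ldots,x_n)\subsetneq X^n$. The paper's map \eqref{eqn:susp-apt-map} is defined by coning off, so the barycentre of $\Delta^n$ goes to the cone point $X^n$, not to $U$. Its image is therefore not contained in $\T^\cL(X^n)$, and your proposed direct check fails as stated. The fix is short. The map $\Phi|_{\mathrm{sd}\,\Delta^n}$ (barycentre $\mapsto U$) and the coned-off map (barycentre $\mapsto X^n$) agree on $\mathrm{sd}\,\partial\Delta^n$. They are also contiguous: every chain of spans of faces lies inside $U\subsetneq X^n$, so it stays a chain after adjoining both $U$ and $X^n$. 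Hence the two maps are homotopic rel $\partial\Delta^n$. Since $\Phi|_{\mathrm{sd}\,\Delta^n}$ lands in $\T^\cL(X^n)$, this gives $\apt(x_0,\ldots,x_n)=0=\Phi_*[x_0,\ldots,x_n]$. Equivalently, the apartment map lands in the contractible cone $\CT^\cL(U)\subseteq\T^\cL(X^n)$, so its cone-off is null as a map of pairs. With this added, your composite agrees with $\apt$ on every generator and the proof is complete. It also handles the degenerate relation more directly than the paper does.
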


	\begin{proof}
	We first prove that for an $(n+2)$-tuple of points $(y_0,\ldots,y_{n+1})$ the relation \[\sum_{i=0}^{n+1} (-1)^i \apt(y_0,\ldots,\widehat{y}_i,\ldots,y_{n+1}) = 0\] holds.
	To do so, the $(n+2)$-tuple induces a map $\Delta^{n+1}/\mathrm{sk}_{n-1}(\Delta^{n+1}) \to \CT^\cL(X^n)/\T^\cL(X^n)$, which exhibits the sum of its faces as a boundary, signs arising from the orientation induced by the standard orientation of $\Delta^{n+1}$ on its faces.

	We next prove that for $(x_0,\ldots,x_n)$ contained in a subspace $U \subsetneq X^n$ of dimension at most $n-1$ the relation $\apt(x_0,\ldots,x_n) = 0$ holds. To do so, we use \cref{steinberg_case_1} to prove that this map factors over the map $\ST^{\cL \cap U}(U) \to \ST^\cL(X^n)$, which is a map from a wedge of $(n-1)$-spheres to a wedge of $n$-spheres and is therefore null-homotopic.
	\end{proof}

	\subsection{Solomon--Tits when \texorpdfstring{$\cL$}{\unichar{"1D4DB}} is generated by points} To prove \cref{thm:generated-by-points} it remains to check that the map of \cref{st-apt-well-defined} is an isomorphism. To do this we will construct an isomorphism $\Ls^\cL(X^n) \cong \widetilde H_n(\Tpl(\cL^0) / \Tpl(\cL^0)^{n-1})$, compose it with the isomorphism induced on reduced homology by the equivalence from \eqref{eqn:st_zigzag}, and check that the resulting isomorphism $\smash{\Ls^\cL(X^n) \cong \widetilde{H}_n(\ST^\cL(X^n))}$ takes each element $(x_0,\ldots,x_n)$ to the corresponding apartment class.

	To set this up, observe that \eqref{eqn:st_zigzag} is a zig-zag of equivalences that all have a particular form: there is a contractible complex (the ``numerator'') quotiented by a non-contractible subcomplex (the ``denominator''). In particular, to calculate its effect on homology, it suffices to understand how apartment maps behave under the equivalences of ``denominators'':
	\begin{equation}\label{eqn:t_zigzag}
	\begin{tikzcd}
		\Tpl(\cL^0)^{n-1} & \ar[l,"\simeq" swap] \underset{U \in \cL \setminus \{X^n\}}\hocolim\, \Tpl^{U}(\cL^0) \ar[r,"\simeq"] & \underset{U \in \cL \setminus \{X^n\}}\hocolim\, * = \T^\cL(X^n).
	\end{tikzcd}
	\end{equation}
	
	Given $(x_0,\ldots,x_n)$, each face $F \subseteq \partial \Delta^n$ corresponds to a proper subset of the set $\{0,\ldots,n\}$ and hence of the set $\{x_0,\ldots,x_n\}$ of $0$-dimensional subspaces, whose affine span in $X^n$ we call $\spa F$. A map from $\partial \Delta^n$ into any of the three spaces in \eqref{eqn:t_zigzag} is \emph{apartment-like} if it sends each face $F \subseteq \partial \Delta^n$ into the contractible subcomplex that only involves subspaces of $\spa F$. The following definition makes this precise.

	\begin{definition}\label{st-apt-like} Fix an $(n+1)$-tuple $(x_0,\ldots,x_n)$ in $\cL^0$. 
	\begin{itemize}
		\item A map $\partial \Delta^n \to \Tpl(\cL^0)^{n-1}$ is \emph{apartment-like} if each face $F \subseteq \partial \Delta^n$ is sent into the contractible subcomplex $\Tpl^{\spa F}(\cL^0)$.
		\item A map 
		\[\partial \Delta^n \longrightarrow \left(\underset{U \in \cL \setminus \{X^n\}}\hocolim\, \Tpl^{U}(\cL^0)\right)\]
		 is \emph{apartment-like} if each face $F \subseteq \partial \Delta^n$ is sent into the contractible subcomplex $\hocolim_{U \in \cL, \ U \subseteq \spa F} \Tpl^{U}(\cL^0)$.
		\item A map $\partial \Delta^n \to \T^\cL(X^n)$ is \emph{apartment-like} if each face $F \subseteq \partial \Delta^n$ is sent into the contractible subcomplex $\CT^\cL(\spa F)$ of all subspaces contained in $\spa F$, including $\spa F$ itself (hence the $\CT$ instead of the $\T$).
	\end{itemize}
	\end{definition}

	\begin{lemma}\label{apt-like-preserved}
		For any fixed $(n+1)$-tuple $(x_0,\ldots,x_n)$ in $\cL^0$ and each of the three spaces in \eqref{eqn:t_zigzag}, the space of apartment-like maps is weakly contractible. Moreover,  composition with the maps in \eqref{eqn:t_zigzag} preserves apartment-like maps.
	\end{lemma}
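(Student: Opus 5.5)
The plan is to treat the space of apartment-like maps, in each of the three cases, as the space of sections of a diagram of contractible spaces indexed by the face poset of $\partial\Delta^n$. For each of the three targets $Y$ in \eqref{eqn:t_zigzag}, write $Y_F \subseteq Y$ for the subcomplex assigned to a face $F$. These are, respectively,
\[\Tpl^{\spa F}(\cL^0), \qquad \hocolim_{U\in\cL,\, U\subseteq \spa F}\Tpl^U(\cL^0), \qquad \CT^\cL(\spa F).\]
Three facts about these subcomplexes will be used:
\begin{itemize}
\item If $F\subseteq F'$ then $\spa F\subseteq \spa F'$, so $Y_F\subseteq Y_{F'}$.
\item Each $Y_F$ is contractible. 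For the first, any vertex $x_i\in F$ is a cone point. For the second, $\spa F\in\cL$ is terminal in the indexing poset and $\Tpl^U(\cL^0)$ is contractible, so the hocolim is contractible. For the third, $\spa F$ is a cone point. Note that $\spa F\in \cL$ because $\cL$ is generated by points.
\item Each $Y_F$ lies in $Y$, since $\spa F$ is proper: $F$ has at most $n$ vertices, so its span has dimension at most $n-1$.
\end{itemize}

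An apartment-like map is a map $f\colon\partial\Delta^n\to Y$ with $f(F)\subseteq Y_F$ for every face $F$. I will show weak contractibility by induction over skeleta, building maps and homotopies simultaneously. Concretely, to show $\pi_k$ of the mapping space vanishes, take a family $S^k\to \mathrm{Map}^{\mathrm{apt}}(\partial\Delta^n,Y)$. This is a map $S^k\times\partial\Delta^n\to Y$ sending $S^k\times F$ into $Y_F$, and I want to extend it to $D^k\times\partial\Delta^n$ with the same constraint. Proceed face by face in order of increasing dimension. At the stage of a face $F$, the map is already defined on
\[(S^k\times F)\cup(D^k\times\partial F),\]
which lands in $Y_F$, because every proper subface $F'$ of $F$ satisfies $Y_{F'}\subseteq Y_F$. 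This domain is a sphere boundary of the disk $D^k\times F$, and $Y_F$ is contractible, so the extension exists. The case $k=0$ (non-emptiness) is the same argument with $S^{-1}=\varnothing$. This relative cell-by-cell argument is standard, and I would phrase it using the mapping-space description, or equivalently as the contractibility of a homotopy limit of contractible spaces over the face poset.

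For preservation, it suffices to check that each map $\phi$ in \eqref{eqn:t_zigzag} satisfies $\phi(Y_F)\subseteq Y'_F$ for the corresponding subcomplexes.
\begin{itemize}
\item The left map is induced by the inclusions $\Tpl^U(\cL^0)\subseteq\Tpl(\cL^0)^{n-1}$. It sends $\hocolim_{U\subseteq\spa F}\Tpl^U(\cL^0)$ into $\Tpl^{\spa F}(\cL^0)$, since $U\subseteq \spa F$ implies $\Tpl^U(\cL^0)\subseteq \Tpl^{\spa F}(\cL^0)$.
\item The right map collapses each $\Tpl^U(\cL^0)$ to a point. It is the hocolim of $\Tpl^U(\cL^0)\to *$ over the subposet $\{U\subseteq \spa F\}$, which lands in the hocolim of $*$ over that subposet, namely the nerve of $\{U\in\cL: U\subseteq\spa F\}$, which is exactly $\CT^\cL(\spa F)$.
\end{itemize}
One point needs care in the Bousfield--Kan model. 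The simplices of the hocolim over the subposet are precisely those whose flag ends in some $U\subseteq\spa F$. So the subcomplex for $F$ is the hocolim over the subposet, and functoriality of hocolim in the indexing category and in the diagram gives the claim.

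The only real obstacle is the bookkeeping for the middle space: identifying the sub-hocolim as a subcomplex, and checking that it is contractible and preserved by both maps. Once that identification is made, everything follows from naturality of the Bousfield--Kan construction.
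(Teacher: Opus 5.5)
Your proposal is correct and follows the paper's proof essentially verbatim. The paper also extends a sphere's worth of apartment-like maps over a disc cell by cell on $D^m \times F$, using that the subcomplexes assigned to faces are nested and contractible, and it treats preservation under the zig-zag maps as immediate from the definitions; you spell out the contractibility and preservation checks explicitly.

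The only slip is in the indexing. A family $S^k \to \mathrm{Map}$ should be extended over $D^{k+1}$ rather than $D^k$, and the face-by-face domain should read $(S^k \times F) \cup (D^{k+1} \times \partial F)$. Your later remark about $S^{-1} = \varnothing$ already uses the correct convention.
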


	\begin{proof}
		We prove that the space of apartment-like maps is weakly contractible using the method of \cite[Lemma 4.12]{malkiewich2022}. To do the argument in a uniform way, let $T$ be one of the complexes in \eqref{eqn:t_zigzag}, and for each face $F \subseteq \partial \Delta^n$ let $CT_{\spa F}$ be the contractible subcomplex of $T$ where $F$ is allowed to go, implicitly depending on $(x_0,\ldots,x_n)$. Note that when $F \subseteq F'$, $CT_{\spa F} \subseteq CT_{\spa F'}$, in other words passing from $F$ to $CT_{\spa F}$ respects inclusion.

		To prove weak contractibility, given an $(m-1)$-sphere of apartment-like maps
		\[ S^{m-1} \times \partial \Delta^n \to T,  \]
		we extend it to an $m$-disc of apartment-like maps $D^m \times \partial \Delta^n \to T$. It suffices to consider cells of the form $D^m \times F$ for $F \subseteq \partial \Delta^n$, to assume that the extension has been defined on $S^{m-1} \times F$ and $D^m \times \partial F$, and construct it to $D^m \times F$. Since the complexes $CT_{\spa F}$ respect inclusion, the given map lands in the complex $CT_{\spa F}$:
		\[ (S^{m-1} \times F) \cup_{(S^{m-1} \times \partial F)} (D^m \times \partial F) \to CT_{\spa F}. \]
		Since $CT_{\spa F}$ is contractible, there exists an extension of this to $D^m \times F$. Repeating for each cell of $D^m \times \partial \Delta^n \to T$ in this way proves that the extension exists.

		The fact that apartment-like maps are preserved under composition with the arrows in \eqref{eqn:t_zigzag} follows from the definition.
	\end{proof}

	\begin{corollary}\label{apt-like-preserved-2}
		Any suspension of an apartment-like map
		\[ \frac{\Delta^n}{\partial \Delta^n} \longrightarrow \frac{\Tpl(\cL^0)}{\Tpl(\cL^0)^{n-1}}, \]
		when composed with the equivalence of \eqref{eqn:st_zigzag} goes to a map that is homotopic to the suspended apartment map \eqref{eqn:susp-apt-map}
		\[ \frac{\Delta^n}{\partial \Delta^n} \longrightarrow \frac{\CT^\cL(X^n)}{\T^\cL(X^n)} = \ST^\cL(X^n). \]
	\end{corollary}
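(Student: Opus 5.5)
The plan is to reduce the statement to \cref{apt-like-preserved}, passing through the middle term of the zig-zag \eqref{eqn:st_zigzag}. The key fact is that each map in \eqref{eqn:st_zigzag} is a map of pairs $(C,T)\to(C',T')$ with contractible numerators $C,C'$. The denominators are related by \eqref{eqn:t_zigzag}, and the numerators are related by the corresponding maps
\[\Tpl(\cL^0) \longleftarrow \Tpl(\cL^0) \longrightarrow \underset{U\in\cL}\hocolim\,* = \CT^\cL(X^n).\]
Here the right-hand map collapses $\Tpl(\cL^0)$ to the cone point, i.e.\ the vertex $X^n$, and extends the right-hand map of \eqref{eqn:t_zigzag}.

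First I would interpret a suspended apartment-like map $\Delta^n/\partial\Delta^n \to C/T$ as a map of pairs $(\Delta^n,\partial\Delta^n)\to (C,T)$ whose restriction to $\partial\Delta^n$ is apartment-like. The extension over $\Delta^n$ is unique up to homotopy rel $\partial\Delta^n$ because $C$ is contractible.

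Second, I would check that the suspended apartment map \eqref{eqn:susp-apt-map} is of this form. Its restriction to $\partial\Delta^n$ is the apartment map \eqref{eqn:apt-map}. This map is apartment-like, since a simplex of $\mathrm{sd}\,F$ is a flag of spans of subsets of $F$, all contained in $\spa F$, so it lands in $\CT^\cL(\spa F)$.

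Third, given a suspended apartment-like map into $\Tpl(\cL^0)/\Tpl(\cL^0)^{n-1}$, I would lift it through the left equivalence of \eqref{eqn:st_zigzag}. Concretely, I pick an apartment-like map $\partial\Delta^n\to\hocolim_{U}\Tpl^U(\cL^0)$, which exists because the space of such maps is non-empty by \cref{apt-like-preserved}. I extend it over $\Delta^n$ into $\Tpl(\cL^0)$ and push it forward along the right map. By \cref{apt-like-preserved}, the image of this boundary map in $\Tpl(\cL^0)^{n-1}$ is apartment-like and hence homotopic, through apartment-like maps, to the original boundary map. Since the numerators are contractible, that homotopy of boundaries extends to a homotopy of maps of pairs. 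So the chosen lift represents the same map up to homotopy, compatible with the zig-zag. Pushing forward along the right equivalence, the boundary becomes an apartment-like map into $\T^\cL(X^n)$. Weak contractibility of that space, applied to a path, gives a homotopy through apartment-like maps to \eqref{eqn:apt-map}. Extending over the contractible numerator $\CT^\cL(X^n)$ once more yields a homotopy of pair maps, hence of the induced maps on quotients, to \eqref{eqn:susp-apt-map}.

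The main obstacle is the bookkeeping of ``composing with a zig-zag'': the left arrow has to be inverted, and one must know that this inversion is realised by a lift that is again apartment-like. I would handle this by showing that homotopy classes of pair maps $(\Delta^n,\partial\Delta^n)\to(C,T)$ with contractible $C$ correspond bijectively to homotopy classes of maps $\partial\Delta^n\to T$. Via $\pi_{n-1}$ and the long exact sequence this is exactly $\pi_n(C/T)$ up to the Hurewicz identification, and it is natural in the zig-zag. Applying $\pi_0$ of the spaces of apartment-like maps, each a single point by \cref{apt-like-preserved}, then finishes the argument.
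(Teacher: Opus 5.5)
Your proposal is correct and follows the paper's intended route: the paper states this corollary without a separate proof, as an immediate consequence of \cref{apt-like-preserved} (weak contractibility of the spaces of apartment-like maps into each term of \eqref{eqn:t_zigzag}, and their preservation under the zig-zag maps) together with contractibility of the numerators, and you spell this out, including the check that \eqref{eqn:apt-map} is itself apartment-like. Your direct lifting argument in the third step already handles inverting the left arrow, so the closing detour through $\pi_{n-1}$, the long exact sequence and Hurewicz is unnecessary, and for small $n$ it gives an identification only after passing to homology.
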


	\begin{theorem}[Solomon-Tits for $\cL$ generated by points]\label{steinberg_case}
		If $\cL$ is generated by points, $\ST^\cL(X^n)$ is equivalent to a wedge of $n$-spheres	and the apartment class map induces a well-defined isomorphism
		\[\apt \colon \Ls^\cL(X^n) \overset{\cong}\longrightarrow \widetilde{H}_n(\ST^\cL(X^n)).\]
	\end{theorem}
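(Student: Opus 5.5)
The wedge-of-spheres statement is already \cref{steinberg_case_1}, and \cref{st-apt-well-defined} gives a well-defined $\Z[G_\cL]$-linear map $\apt$. So it remains to show that $\apt$ is bijective. I will do this by computing $\widetilde H_n(\Tpl(\cL^0)/\Tpl(\cL^0)^{n-1})$ directly and then transporting the answer along the zig-zag \eqref{eqn:st_zigzag}.

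\textbf{Computing the quotient.} Order $\cL^0$ totally, and use the ordered simplicial chain complex of $\Tpl(\cL^0)$ relative to $\Tpl(\cL^0)^{n-1}$. By \cref{rem:lee-szczarba-group}, tuples with repetitions and arbitrary orderings give the same group. A simplex of $\Tpl(\cL^0)$ lies in $\Tpl(\cL^0)^{n-1}$ exactly when its vertices span a proper subspace. Every simplex of dimension $\le n-1$ has at most $n$ vertices, so it spans a proper subspace. Hence the relative chain complex $C_*$ vanishes below degree $n$. This gives
\[
\widetilde H_n = C_n/\partial C_{n+1}.
\]
Here $C_n$ is free on $(n+1)$-tuples in general position, which is the same as free on all $(n+1)$-tuples modulo the degenerate ones (those lying in some $U\subsetneq X^n$). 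The image $\partial C_{n+1}$ is generated by the alternating face sums of $(n+2)$-tuples. This is exactly the presentation of $\Ls^\cL(X^n)$ in \cref{def:lee-szczarba-group}. So we get a $G_\cL$-equivariant isomorphism
\[
\Ls^\cL(X^n)\cong \widetilde H_n(\Tpl(\cL^0)/\Tpl(\cL^0)^{n-1})
\]
sending $(x_0,\dots,x_n)$ to the class of the characteristic map $\Delta^n/\partial\Delta^n\to \Tpl(\cL^0)/\Tpl(\cL^0)^{n-1}$ of that simplex.

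\textbf{Matching with apartment classes.} Composing with the isomorphism on $\widetilde H_n$ induced by \eqref{eqn:st_zigzag} gives an isomorphism $\Ls^\cL(X^n)\cong\widetilde H_n(\ST^\cL(X^n))$. It remains to check that this isomorphism sends $(x_0,\dots,x_n)$ to $\apt(x_0,\dots,x_n)$.

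The characteristic map of the simplex is the cone on its boundary inclusion $\partial\Delta^n\to\Tpl(\cL^0)^{n-1}$. This boundary inclusion is apartment-like, because each face $F$ maps to the simplex on its own vertices, which lies in $\Tpl^{\spa F}(\cL^0)$. The apartment map \eqref{eqn:apt-map} is also apartment-like in $\T^\cL(X^n)$.

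The middle term of the zig-zag only maps outward. So I lift the boundary inclusion to an apartment-like map into $\hocolim_{U}\Tpl^U(\cL^0)$, which is possible since the space of such maps is nonempty by \cref{apt-like-preserved}. I extend this lift over the cone inside the contractible numerator. By \cref{apt-like-preserved}, pushing the lift forward along either arrow gives an apartment-like map, and any two apartment-like maps into the same space are homotopic through apartment-like maps, since the space of them is weakly contractible. On one side, the pushed-forward lift is therefore homotopic to the original boundary inclusion. On the other side, it is homotopic to the apartment map. This is exactly the content of \cref{apt-like-preserved-2}.

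The homotopies of boundary maps extend to homotopies of the coned maps relative to the numerators, which are contractible. So the fundamental classes correspond, and $(x_0,\dots,x_n)\mapsto\apt(x_0,\dots,x_n)$ is the composite isomorphism. Equivariance is clear, since every identification used is natural in $G_\cL$.

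\textbf{Main obstacle.} The step needing the most care is this last one: checking that the zig-zag of quotients induces on $\widetilde H_n$ the map determined by the boundary (denominator) maps. Concretely, one must confirm that a homotopy of apartment-like boundary maps, coned off, gives a homotopy of pointed maps $S^n\to$ quotient. I will handle this with the naturality of the connecting isomorphism $\widetilde H_n(\text{numerator}/\text{denominator})\cong \widetilde H_{n-1}(\text{denominator})$, which holds because each numerator is contractible. This reduces the comparison to the $(n-1)$-cycles given by the apartment-like boundary maps, and their homology classes agree by \cref{apt-like-preserved}.
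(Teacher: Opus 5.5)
Your proposal is correct and follows essentially the same route as the paper: identify $\widetilde H_n(\Tpl(\cL^0)/\Tpl(\cL^0)^{n-1})$ with $\Ls^\cL(X^n)$ via the relative ordered chain complex, which vanishes below degree $n$, and then note that the canonical boundary map $\partial\Delta^n \to \Tpl(\cL^0)^{n-1}$ is apartment-like, so that \cref{apt-like-preserved} and \cref{apt-like-preserved-2} carry each generator to its apartment class along the zig-zag \eqref{eqn:st_zigzag}. The only difference is that you spell out the lifting and homotopy-of-pairs argument behind \cref{apt-like-preserved-2}, which the paper leaves implicit.
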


	\begin{proof}
	We already showed in \cref{steinberg_case_1} that $\ST^\cL(X^n)$ is a wedge of $n$-spheres, so we only have to prove that the apartment map is an isomorphism. Choose a total ordering on the vertices of $\Tpl(\cL^0)$ and form its simplicial chain complex. If we quotient out by all the simplices in $\Tpl(\cL^0)^{n-1}$, the result is a cellular chain complex for the quotient space $\Tpl(\cL^0) / \Tpl(\cL^0)^{n-1}$. We arrive at a formula for $\widetilde H_n(\Tpl(\cL^0) / \Tpl(\cL^0)^{n-1})$: a generator for each non-degenerate $(n+1)$-tuple of points, modulo the simplicial relation. In other words, we get the same presentation as $\Ls^\cL(X^n)$ (compare with \cref{rem:lee-szczarba-group}).

	This gives an isomorphism
	\[ \Ls^\cL(X^n) \cong \widetilde H_n(\Tpl(\cL^0) / \Tpl(\cL^0)^{n-1}), \]
	where each generator goes to the suspension of the canonical map $\partial \Delta^n \to \Tpl(\cL^0)^{n-1}$ that sends the vertices of $\Delta^n$ to the points $x_i \in X^n$. This canonical map is apartment-like, so by \cref{apt-like-preserved-2}, if we compose with the isomorphism
	\[ H_n(\Tpl(\cL^0) / \Tpl(\cL^0)^{n-1}) \cong H_n(\ST^\cL(X^n)) \]
	induced by the equivalence $\eqref{eqn:st_zigzag}$, we get the map $\Ls^\cL(X^n) \to \widetilde{H}_n(\ST^\cL(X^n))$ that sends each generator to its apartment class. We have now shown that this map is a composition of isomorphisms, and is therefore an isomorphism.
	\end{proof}

	This finishes the proof of \cref{thm:generated-by-points} and from it we can deduce \cref{thm:generated-by-hyperplanes-spherical}.
	
	\begin{proof}[Proof of \cref{thm:generated-by-hyperplanes-spherical}] Suppose $\cL$ is a collection of subspaces in the sphere $S^n$ whose intersection is empty, or equivalently subspaces of $\R^{n+1}$ whose intersection is the zero subspace. We consider the dual collection $\cL^\perp$ of orthogonal complements of the subspaces in $\cL$ (including $S^n$ itself and not including $\varnothing$). This sends spans to intersections and vice-versa, so $\cL$ is generated by hyperplanes (and has $\bigcap \cL = \varnothing$) if and only if $\cL^\perp$ is generated by points (which requires that its span is all of $S^n$).
	
	The duality map $U \mapsto U^\perp$ induces an isomorphism of Tits complexes $\T^\cL(S^n) \cong \T^{\cL^\perp}(S^n)$ that reverses the order of the poset. Hence if $\cL$ is generated by hyperplanes and is admissible, $\T^\cL(S^n)$ is a wedge of $(n-1)$-spheres and there is an isomorphism proving \cref{thm:generated-by-hyperplanes-spherical}
	\[ \Ls^{\cL^\perp}(S^n) \overset{\cong}\longrightarrow \widetilde{H}_n(\ST^{\cL^\perp}(X^n)) \cong \widetilde{H}_n(\ST^{\cL}(X^n)).\qedhere\]
	\end{proof}
	
	\begin{remark}Note that in the further case that $\cL$ is generated by both points and hyperplanes, the isomorphism of Tits complexes induces an isomorphism of groups
		\[ \Ls^{\cL}(S^n) \cong \Ls^{\cL^\perp}(S^n), \]
		and in the further case that $\cL^\perp = \cL$, this becomes an involution on the Lee--Szczarba group.\end{remark}
	
	\begin{remark}\label{st-n0}
		The arguments of this section work even when $n = 0$. In the case of $S^0$,  ``generated by points'' means that $\cL = \{S^0\}$. Thus $T^\cL(S^0) = \varnothing$, $\ST^\cL(S^0) \cong S^0$, and the apartment class defines an isomorphism $\Z \cong \Ls^\cL(S^0) \longrightarrow \widetilde{H}_0(\ST^\cL(S^0)) \cong \Z$.
	\end{remark}

	\section{Generation by hyperplanes and the Euclidean case}

	In the next three sections we prove \cref{thm:generated-by-hyperplanes}: it says that $\PT^\cL(X^n)$ is equivalent to a wedge of $n$-spheres and there is an isomorphism $\smash{\Pt^\cL(X^n) \cong \widetilde{H}_n(\PT^\cL(X^n))}$, if $\cL$ is \emph{generated by hyperplanes}---that is, $\cL$ is equal to the set of intersections of finite collections of hyperplanes in $\cL^{n-1}$, other than $\varnothing$---and \emph{admissible}. The notion of admissible depends on the geometry, and appears to be necessary.

	\subsection{The restricted polytope group} Our first objective is to define the $\cL$-polytope group in a geometry $X^n$, fixed for the remainder of this section.

	\begin{definition}\label{df:pt}
	Suppose $\cL$ is generated by hyperplanes.
		\begin{enumerate}
			\item A \emph{convex $\cL$-polytope} in $X^n$ is a subset $P \subseteq X^n$ obtained as an intersection of $X^n$ with finitely many half-spaces in $\bR^{n+1}$ defined by hyperplanes in $\cL^{n-1}$, such that $P$ is bounded and not contained in proper geometric subspace $U \subsetneq X^n$.
			\item An \emph{$\cL$-polytope} is a subset $P \subseteq X$ that is a finite union of convex $\cL$-polytopes.
		\end{enumerate}
	\end{definition}
	
	We refer to the condition in \cref{df:pt} (i) that $P$ is not contained in proper geometric subspace $U \subsetneq X^n$ as $P$ being \emph{$n$-dimensional}, as it is equivalent to its affine span being $X^n$. If $P$ is an $\cL$-polytope, a \emph{subdivision} of $P$ is a finite union of $\cL$-polytopes $\{P_i\}$ whose union is $P$, and whose interiors are disjoint. (This is sometimes called a \emph{weak subdivision} or an \emph{almost-disjoint cover}. The smaller polytopes $P_i$ are not required to intersect along common faces.) Without loss of generality (subdividing further), the polytopes $P_i$ are obtained by intersecting $P$ with finitely many hyperplanes in $\cL^{n-1}$, and taking the closures of all the resulting $n$-dimensional regions.

	\begin{definition}\label{def:polytope-group}
		The \emph{$\cL$-polytope group} $\Pt^\cL(X^n)$ is the free abelian group on the $\cL$-polytopes $P \subseteq X^n$, modulo the relation that whenever $P$ subdivides into finitely many $\cL$-polytopes $\{P_i\}$, we have $[P] = \sum_i [P_i]$:
		\[ \Pt^\cL(X^n) = \frac{ \Z\langle [P] \rangle }{ [P] = \sum_i [P_i] \textup{ if $P$ subdivides into $\{P_i\}$ } }. \]
		We make it a $\Z[G_\cL]$-module by $g \cdot [P] = [gP]$.
	\end{definition}

	\begin{remark}
	Every polytope can be subdivided into convex polytopes. As a result, we get the same group $\Pt^\cL(X^n)$ if we only take convex polytopes in the generators and the relations.
	\end{remark}

	\begin{remark}
		\label{rem:spherical-case-polytope-group}
		In the spherical case, we call a polytope that is convex in the sense of \cref{df:pt} \emph{weakly convex}. We call it \emph{strongly convex} if in addition it lies in an open hemisphere. We define $\Pt^\cL(S^n)$ using weakly convex polytopes because it gives a sensible answer even when $\cL$ has very few hyperplanes. For instance, if $\cL$ has no hyperplanes, then $\Pt^\cL(S^n) \cong \Z$, generated by $S^n$, which is a weakly convex polytope in $S^n$.
	\end{remark}
	
	We record the following fact from the theory of convex polytopes. A \emph{face} of a convex $\cL$-polytope $P$ is a subset $F$ obtained by intersecting with any hyperplane $U$ such that $P$ is contained in one of the half-spaces $U^-$ or $U^+$ defined by $U$. A \emph{facet} is an $(n-1)$-dimensional face $F$, defined by a unique hyperplane $U = \spa F$.
	\begin{proposition}\label{faces_to_L}
		Each convex $\cL$-polytope $P$ has a minimal collection of half-spaces that define it. The corresponding hyperplanes $U_1, \cdots, U_k$ are the spans of the facets of $P$. Furthermore, for every face $F$ of $P$, we have
		\[ \spa F = \bigcap_{F \subseteq U_i} U_i \quad \textup{and} \quad F = P \cap \spa F. \]
	\end{proposition}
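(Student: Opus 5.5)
Since the statement is a standard fact about convex polyhedra, I plan to prove it directly in the linear model. Write $P = X^n \cap \bigcap_{j=1}^m H_j^+$ with each $H_j^+$ a closed half-space of $\R^{n+1}$ bounded by a linear hyperplane $\tilde U_j$, where $U_j = \tilde U_j \cap X^n \in \cL^{n-1}$. In the Euclidean and hyperbolic cases I would pass to an affine chart ($x_0=1$, resp.\ the Klein model), where $P$ is an honest bounded full-dimensional polyhedron and the $U_j$ become affine hyperplanes. In the spherical case I would work with the polyhedral cone $\tilde P = \bigcap H_j^+ \subseteq \R^{n+1}$, so that $P = \tilde P \cap S^n$. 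This cone is full-dimensional, and faces of $P$ correspond to faces of $\tilde P$. The key point is that in every case the statement reduces to the classical one for polyhedra (or polyhedral cones), because geometric subspaces, spans and half-spaces all correspond under these identifications.

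\textbf{Minimal collection.} Start from a defining family and discard redundant half-spaces, i.e.\ those $H_j^+$ with $\bigcap_{i\ne j} H_i^+ \cap X^n = P$, until none remain. For an irredundant family, pick a point $q$ in the complement of $H_j^+$ that lies in all the other half-spaces. The segment (geodesic) from $q$ to an interior point of $P$ crosses $U_j$ in a point lying in the interior of all the other half-spaces. A small neighbourhood of that point in $U_j \cap P$ is then $(n-1)$-dimensional, so $F_j = P \cap U_j$ is a facet with $\spa F_j = U_j$.

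Conversely, every facet $F$ spans a hyperplane that must be one of the $U_j$. Indeed, $F \subseteq \partial P \subseteq \bigcup_j U_j$, and $F$ is $(n-1)$-dimensional, so by a Baire or measure argument $F$ has positive $(n-1)$-measure inside some $U_j$. Hence $\spa F = U_j$. This shows that the irredundant hyperplanes are exactly the spans of the facets, which gives uniqueness and minimality of the collection.

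\textbf{Faces.} Let $F = P \cap U$ be a face, with $P \subseteq U^+$, and set $I = \{ i : F \subseteq U_i \}$. I expect this step to be the main obstacle: showing $\spa F = \bigcap_{i \in I} U_i$ and $F = P \cap \spa F$. My approach is to take a point $p$ in the relative interior of $F$ and let $I' = \{ i : p \in U_i \}$; I expect $I' = I$.

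Near $p$, the polytope $P$ coincides with the cone $\bigcap_{i\in I'} H_i^+$, since the other constraints are strict at $p$. Let $\ell$ be the linear functional defining $U$. On this local cone $\ell$ is nonnegative and vanishes at $p$. By Farkas' lemma, $\ell$ is then a nonnegative combination of the $\ell_i$ with $i \in I'$. It follows that
\[ F = P \cap U \supseteq P \cap \bigcap_{i \in I'} U_i, \]
and locally near $p$ equality holds. The relative-interior choice of $p$ ensures that $F$ fills an open subset of $\bigcap_{i\in I'} U_i$ near $p$, so $\spa F = \bigcap_{i\in I'} U_i$.

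Every $i$ with $F \subseteq U_i$ has $p \in U_i$, so $I \subseteq I'$. Conversely, for $i \in I'$ we have $F \subseteq \spa F \subseteq U_i$, so $I' \subseteq I$. Finally, since $\spa F \subseteq U$, we get $F \subseteq P \cap \spa F \subseteq P \cap U = F$, which proves $F = P \cap \spa F$. The non-strict convexity of weakly convex spherical polytopes is handled by working with cones throughout, where Farkas' lemma applies verbatim.
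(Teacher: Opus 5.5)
Your proof is correct, but it takes a different route from the paper's. The paper does not argue directly. It cites the standard face theory of Euclidean polytopes \cite[Section 2.2]{ziegler} and transports it to hyperbolic and strongly convex spherical polytopes by identifying them with Euclidean polytopes through the linear models in $\R^{n+1}$. It then treats weakly convex spherical polytopes separately, via the observation in \cref{pt-suspended-0}: splitting off the intersection $W$ of the defining hyperplanes writes $P$ as a join $S(W) * P'$ with $P'$ strongly convex in $S(W^\perp)$, and this reduces that case to the previous one.

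You instead prove everything from scratch:
\begin{itemize}
\item irredundancy via the crossing point of a segment;
\item identification of facets via a covering argument;
\item the face formulas via the local cone at a relative interior point.
\end{itemize}
Because you work with polyhedral cones in $\R^{n+1}$, you handle the weakly convex case uniformly: a nontrivial lineality space does not affect your local arguments, so no join reduction is needed. The paper's version is shorter but outsources the content, while yours is self-contained.

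One step should be made explicit. The relative-interior choice of $p$ is exactly what gives $F \subseteq U_i$ for $i \in I'$: a functional $\ell_i \geq 0$ on the convex set $F$ that vanishes at a relative interior point vanishes on all of $F$. Your later step $F \subseteq \spa F \subseteq U_i$ depends on this. The opposite inclusion, that $F$ contains a neighbourhood of $p$ in $\bigcap_{i \in I'} U_i$, holds for any $p \in F$.

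Also, Farkas' lemma is more than you need there. If $x \in \bigcap_{i \in I'} U_i$ is close to $p$, then so is $2p - x$, and both lie in $P$. Then $\ell \geq 0$ at both points together with $\ell(p) = 0$ forces $\ell(x) = 0$.
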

	
	Therefore each convex $\cL$-polytope is defined by a minimal collection of hyperplanes in $\cL^{n-1}$, namely the spans of the facets of $P$. Furthermore, the span of every face is an element of $\cL$, obtained as an intersection of some of these hyperplanes.
	
	The proof of \cref{faces_to_L} follows from the standard theory in e.g.~\cite[Section 2.2]{ziegler}. Note that this theory works equally well for convex hyperbolic polytopes and strongly convex spherical polytopes, since we can identify those with convex Euclidean polytopes using the embeddings of all three geometries into $\R^{n+1}$. The case of weakly convex spherical polytopes then follows easily from the observation made in \cref{pt-suspended-0} below.
	
	\begin{remark}
		We warn the reader that intersecting an arbitrary subset of the $U_i$ together does not give the span of a face in general. Only if we take all of the hyperplanes that contain $F$ do we get the span of $F$.
	\end{remark}

	It can be useful to think of an element of $\Pt^\cL(X^n)$ as a measure on $X^n$. To make this precise, let $C(X^n,\Z)$ denote the abelian group of equivalence classes of functions $X^n \to \Z$, where two functions are equivalent if the set where they are not equal is contained in the union of finitely many hyperplanes in $X^n$.
	
	\begin{lemma}\label{pt_subgroup}
		Assigning a polytope its indicator function yields an injective homomorphism
		\begin{align*} \Pt^\cL(X^n) &\longrightarrow C(X^n,\Z) \\
			P &\longmapsto 1_P.\end{align*}
	\end{lemma}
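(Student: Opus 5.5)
Well-definedness and the homomorphism property come first, and they are routine. If $P$ subdivides into $\{P_i\}$, the $P_i$ cover $P$ and have disjoint interiors. So $1_P$ and $\sum_i 1_{P_i}$ can differ only on the union of the boundaries $\partial P_i$. Each boundary lies in the union of the finitely many facet hyperplanes of the convex pieces (\cref{faces_to_L}), so the two functions agree in $C(X^n,\Z)$. The relations therefore hold, and the map descends to $\Pt^\cL(X^n)$ as a homomorphism.

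For injectivity I would reduce to a canonical form by one common subdivision. Take an element $x=\sum_j a_j[P_j]$; by the remark we may assume each $P_j$ is convex. Let $H_1,\dots,H_m\in\cL^{n-1}$ be all the facet hyperplanes of all the $P_j$. These hyperplanes cut $X^n$ into finitely many open regions; let $C_1,\dots,C_r$ be the closures of the $n$-dimensional regions that meet some $P_j$. Each such region is cut out by half-spaces from $\cL^{n-1}$. It is bounded, because it lies in a bounded $P_j$: a region meeting the interior of $P_j$ sits on the same side of every facet hyperplane of $P_j$. Hence each $C_k$ is a convex $\cL$-polytope. Moreover each $P_j$ is subdivided by exactly the $C_k$ contained in it, so $[P_j]=\sum_{C_k\subseteq P_j}[C_k]$ in $\Pt^\cL(X^n)$. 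This gives $x=\sum_k b_k[C_k]$ with $b_k=\sum_{j:C_k\subseteq P_j}a_j$.

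Now suppose $1_x=0$ in $C(X^n,\Z)$. Then $\sum_k b_k 1_{C_k}$ vanishes outside a finite union of hyperplanes. The interiors of the $C_k$ are pairwise disjoint nonempty open subsets of $X^n$. A finite union of hyperplanes has empty interior, so each interior $\mathrm{int}(C_k)$ contains a point off the exceptional set. At that point the function takes the value $b_k$, so $b_k=0$ for all $k$, and hence $x=0$.

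The main obstacle is the spherical case with weakly convex polytopes. There the regions may fail to be strongly convex, and (if there are few hyperplanes) a region may be all of $S^n$ or a lune. This is harmless for the argument, because \cref{df:pt} allows weakly convex pieces on $S^n$ and boundedness is automatic there. I would also check the degenerate case $m=0$ on $S^n$: the only region is $S^n$ itself, and the argument still goes through. A second point to verify is that the closures $C_k$ are genuinely $n$-dimensional and that their union is each $P_j$ up to a null set. The union claim holds because $P_j$ is the closure of its interior, and the complement of the hyperplanes is dense in that interior.
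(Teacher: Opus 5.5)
Your proposal is correct and follows the paper's proof. Both arguments cut all the (convex) polytopes along the finitely many hyperplanes defining them, obtaining an expression $\sum_k b_k[C_k]$ whose pieces have pairwise disjoint interiors, and then read off $b_k=0$ at an interior point of each $C_k$ lying off the exceptional hyperplanes. The difference is only one of detail: you write out the well-definedness check, the boundedness of the cells, the fact that each $P_j$ is covered by its cells, and the weakly convex spherical case, all of which the paper leaves implicit.
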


	\begin{proof}The indicator function $1_P$ is 1 on the interior of $P$ and 0 on the exterior; it is not necessary to define the function the boundary $\partial P$, since this is contained in a finite union of hyperplanes. It is clear this gives a well-defined homomorphism $\Pt^\cL(X^n) \to C(X^n,\Z)$. To see that it also is injective, we suppose that the element $\sum_i n_i[P_i] \in \Pt^\cL(X^n)$ goes to the equivalence class of the zero function in $C(X^n,\Z)$. Take the finite collection of hyperplanes that define the polytopes $P_i$ and use the subdivision relation in $\Pt^\cL(X^n)$ to cut all of the polytopes along all of these hyperplanes. We get an equivalent expression of the form $\sum_i m_i[Q_i]$, in which the interiors of the $Q_i$ are all disjoint. Since this goes to the equivalence class of a function that is zero on the interior of each $Q_i$, we must have $m_i = 0$ for all $i$, hence our element of $\Pt^\cL(X^n)$ is zero.
	\end{proof}

	\subsection{Apartments and apartment-like maps for \texorpdfstring{$\Pt^\cL(X^n)$}{Pt\unichar{"005E}\unichar{"1D4DB}(X\unichar{"005E}n)}} In contrast to the previous section, we start our proof with the construction of a map of $\Z[G_\cL]$-modules $\Pt^\cL(X^n) \to \widetilde{H}_n(\PT^\cL(X^n))$. Recall that $\PT^{\cL}(X^n)$ is defined to be
	\[ \PT^{\cL}(X^n) \coloneq \frac{ \underset{U \in \cL}\hocolim\, U }{ \underset{U \in \cL \setminus \{X^n\}}\hocolim\, U },\]
	where we take the homotopy colimit in unbased spaces. Again we fix the model of this obtained from the Bousfield--Kan formula. The resulting construction differs from that of $\ST^{\cL}(X^n)$ in that we get a copy of the product $\Delta^p \times U_0$ for each flag of subspaces $\varnothing \subsetneq U_0 \subsetneq U_1 \subsetneq \cdots \subsetneq U_p \subseteq X^n$, joined along faces in the obvious way, and then we collapse to a point the subspace consisting of all simplices in which $U_p$ is not equal to $X^n$.
	
	In this section and the next we will handle the Euclidean and hyperbolic cases, where the evident map $\PT^\cL(X^n) \to \ST^\cL(X^n)$ is an equivalence because all $U$ are contractible. For simplicity we therefore work with $\ST^\cL(X^n)$. The maps we define to $\ST^\cL(X^n)$ are also interesting in the spherical case, even though they are not necessarily isomorphisms; see \cref{lem:map-pt-to-ls}. The following is a variant of \cref{st-apt-like}:

	\begin{definition}\label{st-apt-like-2}
		If $P$ is a convex $\cL$-polytope, a map $\partial P \to \T^\cL(X^n)$ is \emph{apartment-like} if for every face $F \subseteq \partial P$ of any dimension, the image of $F$ lands in the contractible subcomplex $\CT^\cL(\spa F)$ of subspaces contained in the affine span of $F$.
	\end{definition}

	\begin{lemma}\label{apt-like-preserved-3-pt}
		The space of apartment-like maps $\partial P \to \T^\cL(X^n)$ is weakly contractible.
	\end{lemma}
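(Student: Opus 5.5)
We follow the argument of \cref{apt-like-preserved}, replacing the boundary of the simplex by the boundary of the convex polytope $P$. The boundary $\partial P$ is a regular CW complex whose cells are the proper faces $F$ of $P$. Each closed face is a convex polytope, hence a disc, and its boundary is the union of its proper faces. This holds for Euclidean and hyperbolic convex polytopes, and for strongly convex spherical ones, by \cite[Section 2.2]{ziegler}. For weakly convex spherical polytopes we will need the observation referred to as \cref{pt-suspended-0}, namely that they are iterated suspensions of strongly convex ones. This could be the delicate point, and if necessary we handle it by using the face lattice directly instead of a cell structure.

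For each face $F$ we set $CT_{\spa F} \coloneq \CT^\cL(\spa F)$. By \cref{faces_to_L}, $\spa F$ is an intersection of facet hyperplanes, so $\spa F \in \cL$. The complex $\CT^\cL(\spa F)$ is the realisation of a poset with terminal object $\spa F$, and is therefore a cone, hence contractible. If $F \subseteq F'$, then $\spa F \subseteq \spa F'$, so $CT_{\spa F} \subseteq CT_{\spa F'}$. Finally, $\spa F \neq X^n$ for every proper face, so all these complexes lie inside $\T^\cL(X^n)$.

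Weak contractibility requires two things: that an apartment-like map exists, and that every map $S^{m-1} \to \mathrm{Map}^{\apt}(\partial P, \T^\cL(X^n))$ extends over $D^m$. We can treat existence as the case $m=0$, or construct a map directly by induction over skeleta. We extend over the product CW structure on $D^m \times \partial P$ by induction over the cells $D^m \times F$, in order of increasing $\dim F$. At the stage for $D^m \times F$, the map is already defined on
\[ (S^{m-1} \times F) \cup_{S^{m-1} \times \partial F} (D^m \times \partial F). \]
The faces of $F$ map into $CT_{\spa F'} \subseteq CT_{\spa F}$, and $S^{m-1} \times F$ maps into $CT_{\spa F}$ by hypothesis, so the whole subspace maps into $CT_{\spa F}$. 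The pair $(D^m \times F, \partial(D^m\times F))$ is a disc with its boundary sphere, and $CT_{\spa F}$ is contractible, so the map extends to $D^m \times F \to CT_{\spa F}$. The extensions glue to a map $D^m \times \partial P \to \T^\cL(X^n)$ that is apartment-like at every parameter.

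For this inductive construction to be legitimate, the space of apartment-like maps must be identified with the appropriate subspace of the mapping space, using the compact-open topology together with the exponential law for the compact space $\partial P$. This is routine. We expect the main obstacle to be the cell-structure input, particularly in the weakly convex spherical case, where faces may fail to be discs (e.g.\ an antipodal pair, or a whole great sphere). If that happens, we replace the face decomposition by a simplicial subdivision of $\partial P$ and assign each simplex the carrier $CT_{\spa F}$ of the smallest face $F$ containing it. The same cell-by-cell extension then goes through unchanged.
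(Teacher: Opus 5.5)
Your proof is correct and follows the paper's argument: you extend over the cells $D^m \times F$ by induction, using that the carriers $\CT^\cL(\spa F)$ are contractible and increase with $F$. The only difference is in the weakly convex spherical case. There the paper observes that the lowest-dimensional face is a sphere $S^k$, the only face that is not a disc, and simply puts a cell structure on it; your face-compatible triangulation with carriers works equally well.
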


	\begin{proof}
		The proof is the same as in \cref{apt-like-preserved}, except that we are mapping out of $D^m \times \partial P$, and we think of $\partial P$ as a cell complex with one cell for each face of $P$. It may happen in the case of $X^n = S^n$ that the lowest-dimensional face has dimension greater than zero, but in that case it is a sphere $S^k \subseteq S^n$. If so, we choose some cell complex structure on this sphere, before proceeding with the extension argument.
	\end{proof}
	
	To get a consistent choice of signs in the next definition, we fix once and for all an orientation on $X^n$, which induces in turn an orientation on the sphere $P/\partial P$ for each convex polytope $P$.

	\begin{definition}\label{apartment_pt}
		For each convex polytope $P$, we define a \emph{suspended apartment map}
		\[S^n \cong \frac{P}{\partial P} \longrightarrow \frac{\CT^\cL(X^n)}{\T^\cL(X^n)} = \ST^\cL(X^n)\]
		by coning off any apartment-like map $\partial P \to \T^\cL(X^n)$. The image of the fundamental class of $P/\partial P$ under the induced map on homology is the \emph{apartment class} 
		\[ \apt(P) \in \widetilde{H}_n(\ST^\cL(X^n)). \]
	\end{definition}

	\begin{lemma}\label{pt-apt-well-defined}The apartment classes induce a well-defined homomorphism
		\[\apt \colon \Pt^\cL(X^n) \longrightarrow \widetilde{H}_{n}(\ST^\cL(X^n)).\]
	\end{lemma}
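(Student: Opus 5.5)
The plan is to check three things in turn: that $\apt(P)$ is well defined on convex polytopes, that it respects the subdivision relation, and that it is $G_\cL$-equivariant. Since the remark after \cref{def:polytope-group} lets us present $\Pt^\cL(X^n)$ using only convex polytopes and subdivisions of convex polytopes into convex pieces, these three checks suffice.

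\emph{Well-definedness.} The apartment class of a convex $\cL$-polytope $P$ does not depend on the chosen apartment-like map $\partial P \to \T^\cL(X^n)$. By \cref{apt-like-preserved-3-pt}, any two such maps are joined by a path of apartment-like maps, so their cones give homotopic maps $P/\partial P \to \ST^\cL(X^n)$. Apartment-like maps exist, since the space of them is nonempty by the same cell-by-cell extension argument. Equivariance is then immediate: if $g \in G_\cL$ and $f$ is apartment-like for $P$, then $g \circ f \circ g^{-1}$ is apartment-like for $gP$, because $g$ permutes $\cL$ and carries $\spa F$ to $\spa(gF)$. Orientation signs match provided $g$ preserves the fixed orientation. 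If $g$ reverses it, the conventions must be reconciled, and I would note this as a point to check against the definition of the module structure.

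\emph{Additivity, the main step.} Suppose $P$ is subdivided by finitely many hyperplanes $H_1,\dots,H_r \in \cL^{n-1}$ into the closures $P_1,\dots,P_k$ of the $n$-dimensional regions. By induction on $r$ it suffices to treat one hyperplane $H$ cutting $P$ into $P^+ = P\cap H^+$ and $P^- = P \cap H^-$, with $Q = P\cap H$. If one side is lower-dimensional the statement is trivial. Otherwise:
\begin{itemize}
\item The faces of $P^\pm$ are of three kinds: faces of $P$ cut by $H$, faces of $Q$, and $Q$ itself.
\item I would build a compatible family of apartment-like maps on the $(n-1)$-dimensional complex $\partial P \cup Q$. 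This complex has one cell for each face of $P^+$ or $P^-$. On each cell $F$ we require the image to lie in $\CT^\cL(\spa F)$, where the span of a face of $Q$ is an intersection of $H$ with facet hyperplanes of $P$, hence lies in $\cL$.
\item Such a map exists by the same skeletal extension argument as in \cref{apt-like-preserved}, using contractibility of each $\CT^\cL(\spa F)$ and the fact that $F \subseteq F'$ implies $\spa F \subseteq \spa F'$.
\item Since $\spa Q = H$ is proper, $Q$ maps into $\T^\cL(X^n)$. Hence restricting to $\partial P^\pm$ and to $\partial P$ gives apartment-like maps, because every face of $P$ is subdivided into faces of $P^\pm$ with the same or smaller span.
\end{itemize}
Coning off gives a map $P/(\partial P \cup Q) \to \ST^\cL(X^n)$. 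This space is $P^+/\partial P^+ \vee P^-/\partial P^-$, and the pinch map $P/\partial P \to P/(\partial P\cup Q)$ sends the fundamental class to the sum of the two fundamental classes with the orientations induced from $X^n$. Therefore $\apt(P)=\apt(P^+)+\apt(P^-)$.

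The expected obstacle is exactly this compatibility: the restrictions of a single map on $\partial P\cup Q$ must be apartment-like for all three polytopes at once. In particular, faces of $P$ that are cut by $H$ must be handled cell-by-cell using the refined cell structure. In the spherical case there is an additional issue when the lowest face is a sphere, and I would handle it by choosing a cell structure refining $H$ as in \cref{apt-like-preserved-3-pt}.
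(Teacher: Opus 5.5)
Your proposal is correct and is essentially the paper's argument: build a single map on the union of the boundaries that is apartment-like for every piece at once, by extending face-by-face into the contractible complexes $\CT^\cL(\spa F)$; restrict it to $\partial P$; and use that the collapse $P/\partial P \to \bigvee_i P_i/\partial P_i$ has degree one on each summand. The only difference is organisational: you cut along one hyperplane at a time and induct, whereas the paper uses the arrangement of all hyperplanes defining the $P_i$ at once, which also handles convex subdivisions that do not come from an arrangement (for your version, add the one-line remark that such a subdivision and each of its pieces have a common arrangement refinement). Your equivariance paragraph is not needed, since the lemma only asserts a homomorphism of abelian groups, although your orientation-sign concern is legitimate.
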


	\begin{proof}
		Let $P$ be an $\cL$-polytope which is subdivided into finitely many $\cL$-polytopes $\{P_i\}$. Assume that there exists a map $\cup_i \partial P_i \to \T^\cL(X^n)$ that is apartment-like for each $P_i$. Then, this map is automatically apartment-like when restricted to $\partial P$ and hence gives the commutativity of the diagram
		\[ \begin{tikzcd}[column sep=4em]
			P/\partial P \ar[d,"\textup{collapse}"'] \ar[r,"\textup{apt}"] & \ST^{\cL}(X^n). \\
			\bigvee_i P_i/\partial P_i \ar[ru,"\textup{apt}"']
		\end{tikzcd} \]
	By our convention for orientations, if $P$ is subdivided into $P_i$ then the collapse map $P/\partial P \to \bigvee_i P_i/\partial P_i$ is degree one on each generator. It follows that the generator $[P]$ in the polytope group is sent to a homology class in $\widetilde H_n(\ST^{\cL}(X^n))$ that is the sum of the images of the elements $[P_i]$, as required.

	It remains to prove the existence of a map $\cup_i \partial P_i \to \T^\cL(X^n)$ that is apartment-like for each $P_i$. To this end, we take all the hyperplanes defining the $P_i$ and all their intersections to construct a polytopal complex, which contains a subcomplex that identifies with $\cup_i \partial P_i$. Then we define a map from this complex to $\T^\cL(X^n)$, inductively up the dimensions of its faces, sending every face $F$ to the subcomplex $\CT^\cL(\spa F)$. Restriction to $\cup_i \partial P_i$ then yields the desired map $\cup_i \partial P_i \to \T^\cL(X^n)$.
	\end{proof}

	\subsection{Preliminaries for the Euclidean case}
	Let $\cL$ be a set of non-empty affine subspaces of the Euclidean geometry $E^n$ that is generated by hyperplanes, i.e.~it consists of those non-empty subspaces obtained by taking intersections of finitely many hyperplanes in $\cL^{n-1}$. By convention the empty intersection $E^n$ is also in $\cL$. We will prove a Solomon--Tits theorem for $\T^\cL(E^n)$ under the following assumption:

	\begin{definition}\label{admissible-en}
		For $n \geq 1$, a collection $\cL$ of subspaces of $E^n$ that is generated by hyperplanes is \emph{admissible} if  $\cL^0$ is non-empty (that is, there exists a point in $E^n$ that is an intersection of hyperplanes from $\cL^{n-1}$).
	\end{definition}
	
	This is equivalent to asking for the set of hyperplanes $\cL^{n-1}$ to have rank equal to $n$, where the rank is the dimension of the linear space spanned by the normal vectors to the hyperplanes (\cite[Lecture 1]{stanley}).
	
	\begin{remark}
		We will prove under this hypothesis that $\T^\cL(E^n)$ is a wedge of $(n-1)$-spheres. However, suppose $\cL$ is generated by hyperplanes but $\cL^0 = \varnothing$. Then if $k$ is the lowest dimension of subspace in $\cL$, every other subspace in $\cL$ is parallel to some fixed $k$-dimensional one, and therefore our result proves that $\T^\cL(E^n)$ is a wedge of $(n-k-1)$-spheres.
	\end{remark}
	
	For the inductive argument, we will need to consider various collections of subspaces formed out of a given $\cL$ and a hyperplane $U$.

	\begin{definition}\label{cup_and_cap_def}
		Suppose $\cL$ is generated by hyperplanes and $U \subset E^n$ is a single hyperplane. We define:
		\begin{enumerate}
			\item $\cL \Cup U$ as the collection of non-empty subspaces in $E^n$ that are intersections of finitely many hyperplanes in $\cL^{n-1} \cup \{U\}$,
			\item $\cL \uplus U = (\cL \Cup U)\setminus \{U\}$ as the collection of subspaces that are intersections of finitely many hyperplanes in $\cL^{n-1} \cup \{U\}$, but not the subspace $U$ itself, and
			\item $\cL \cap U$ as the collection of subspaces in $\cL \Cup U$ that are contained in $U$. In other words, the set of non-empty subspaces of the form $L \cap U$ for $L \in \cL$.
		\end{enumerate}
		Note that $\cL \cap U$ always contains $U = E^n \cap U$.
	\end{definition}

	\vspace{1em}
	\centerline{
	\def\svgwidth{6.5in}
\begingroup%
  \makeatletter%
  \providecommand\color[2][]{%
    \errmessage{(Inkscape) Color is used for the text in Inkscape, but the package 'color.sty' is not loaded}%
    \renewcommand\color[2][]{}%
  }%
  \providecommand\transparent[1]{%
    \errmessage{(Inkscape) Transparency is used (non-zero) for the text in Inkscape, but the package 'transparent.sty' is not loaded}%
    \renewcommand\transparent[1]{}%
  }%
  \providecommand\rotatebox[2]{#2}%
  \newcommand*\fsize{\dimexpr\f@size pt\relax}%
  \newcommand*\lineheight[1]{\fontsize{\fsize}{#1\fsize}\selectfont}%
  \ifx\svgwidth\undefined%
    \setlength{\unitlength}{465.56599581bp}%
    \ifx\svgscale\undefined%
      \relax%
    \else%
      \setlength{\unitlength}{\unitlength * \real{\svgscale}}%
    \fi%
  \else%
    \setlength{\unitlength}{\svgwidth}%
  \fi%
  \global\let\svgwidth\undefined%
  \global\let\svgscale\undefined%
  \makeatother%
  \begin{picture}(1,0.24967258)%
    \lineheight{1}%
    \setlength\tabcolsep{0pt}%
    \put(0,0){\includegraphics[width=\unitlength,page=1]{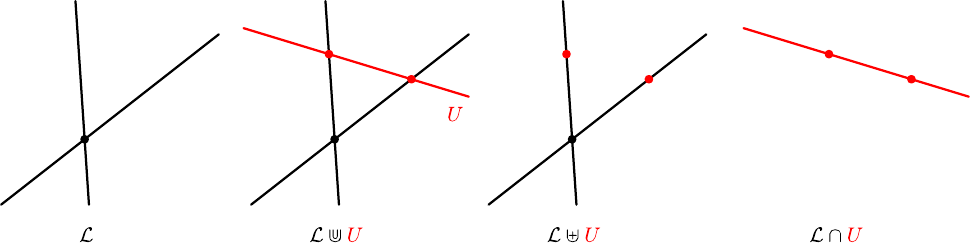}}%
  \end{picture}%
\endgroup%

	}
	\vspace{1em}

	\begin{lemma}\label{admissible_inherited}
		If $\cL$ is admissible in $E^n$ and $n \geq 2$, then for any hyperplane $U \in \cL^{n-1}$ the collection $\cL \cap U$ is also admissible in $U$. Hence, every subspace in $\cL$ contains a point in $\cL^0$.
	\end{lemma}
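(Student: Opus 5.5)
We prove the first claim with linear algebra, via the rank reformulation of admissibility stated after \cref{admissible-en}, and then get the second claim by induction on dimension.

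\textbf{Step 1: $\cL \cap U$ is admissible in $U$.} Since $\cL$ is admissible, pick a point $p \in \cL^0$. Write $p = H_1 \cap \cdots \cap H_k$ with $H_j \in \cL^{n-1}$. The normal vectors of the $H_j$ span $\R^n$, and their common solution set is the single point $p$.

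Let $\nu$ be a normal vector of $U$. Any hyperplane $H_j$ not parallel to $U$ meets $U$ in a hyperplane $H_j \cap U$ of $U$. This is an $(n-2)$-dimensional subspace, and it lies in $\cL \cap U$ because it is an intersection of hyperplanes from $\cL^{n-1}$; here we use $U \in \cL^{n-1}$. The normal vector of $H_j \cap U$ inside $U$ is the orthogonal projection of the normal vector of $H_j$ onto $\nu^\perp$.

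The normals of all the $H_j$ span $\R^n$, so their projections to $\nu^\perp$ span $\nu^\perp$, which is $(n-1)$-dimensional. Parallel $H_j$ have normal proportional to $\nu$, so they project to $0$ and can be discarded. Hence the hyperplanes $H_j \cap U$ of $U$ have rank $n-1$. They are finitely many affine hyperplanes in the $(n-1)$-dimensional space $U$ with normals spanning, so choosing $n-1$ of them with independent normals, their intersection is a single point of $U$. That point lies in $\cL \cap U$, so $(\cL \cap U)^0 \neq \varnothing$.

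One should also check that $\cL \cap U$ is generated by hyperplanes of $U$, in the sense of $\cL \Cup U$ restricted to $U$. This holds because every non-empty $L \cap U$ is an intersection of sets $H \cap U$, and each $H \cap U$ is either a hyperplane of $U$, all of $U$, or empty.

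\textbf{Step 2: every subspace in $\cL$ contains a point of $\cL^0$.} We induct on $n$.

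For $n = 1$, $\cL$ consists of $E^1$ together with points. Admissibility gives a point, so $E^1$ contains one, and each point contains itself.

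For $n \geq 2$, let $V \in \cL$. If $V = E^n$, it contains the given point of $\cL^0$. Otherwise $V \subseteq U$ for some $U \in \cL^{n-1}$, since $V$ is a non-empty intersection of at least one hyperplane. Then $V \in \cL \cap U$, and by Step 1 the collection $\cL \cap U$ is admissible in $U \cong E^{n-1}$. By induction $V$ contains a point of $(\cL \cap U)^0$. Every element of $\cL \cap U$ is an intersection of hyperplanes in $\cL^{n-1}$ (including $U$), so it belongs to $\cL$, and a point of $(\cL \cap U)^0$ therefore lies in $\cL^0$. The induction needs the base case $n = 1$, which is why the lemma assumes $n \geq 2$ for the inherited statement.

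\textbf{Main obstacle.} The only delicate point is the rank computation in Step 1: parallel hyperplanes must be handled, and the projected normals must be shown to span $\nu^\perp$. Both follow because projection $\R^n \to \nu^\perp$ is surjective.
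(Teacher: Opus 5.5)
Your proof is correct and is essentially the paper's argument: the paper takes $n$ hyperplanes of $\cL$ meeting in a point and uses the Steinitz exchange lemma to swap one of their normals for the normal of $U$. That is the same rank statement as your observation that the projected normals span $\nu^\perp$, so that $n-1$ of the traces $H_j \cap U$ meet in a single point. You also spell out the induction behind the ``hence'' clause, which the paper leaves implicit; one small correction is that the lemma assumes $n \geq 2$ because admissibility in $U$ only makes sense when $\dim U \geq 1$, not because of your induction's base case.
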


	\begin{proof}
	As $\cL \cap U$ is generated by those hyperplanes in $U$ of the form $L \cap U$ for $L \in \cL^{n-1}$, it remains to show that it contains a 0-dimensional subspace. Let $L_1 \cap \cdots \cap L_n = \{x\}$ be a collection of hyperplanes in $\cL$ intersecting at one point. As remarked after \cref{admissible-en}, this means that their normal vectors from a basis $\{w_i\}$ of $\R^n$. The normal vector $w$ of $U$ is thus in the linear hull of $\{w_i\}$, and it follows from Steinitz exchange lemma that there exists some $i$ such that $\{w_j\}_{j \neq i} \cup \{w\}$ is a basis of $\R^n$. Using the remark after \cref{admissible-en} again, it follows that $\cap_{j \neq i} L_i \cap U$ is a point.
	\end{proof}

	Observe that if $\cL$ is admissible in $E^n$, then $\cL \Cup U$ also admissible in $E^n$ and if $n \geq 2$ then $\cL \cap U$ is admissible in $U$ (by \cref{admissible_inherited}, as $U$ is an element of $\cL \Cup U$).
	
	\begin{lemma}\label{new_generators}
		If $\cL$ is admissible, $n \geq 1$, and $U \notin \cL$ is a hyperplane, then any $(n-1)$-dimensional $(\cL \cap U)$-polytope $P \subseteq U$ has a subdivision in which each piece $P_i$ is a facet of a convex $n$-dimensional $(\cL \Cup U)$-polytope $Q_i \subseteq E^n$.
	\end{lemma}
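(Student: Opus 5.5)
The plan is to construct the polytopes $Q_i$ explicitly as intersections of half-spaces. First reduce to the convex case: $P$ is a finite union of convex $(\cL \cap U)$-polytopes, and subdividing a piece of a subdivision again gives a subdivision, so we may assume $P$ is convex and $(n-1)$-dimensional in $U$. By \cref{faces_to_L} applied inside $U$, $P$ is cut out in $U$ by half-spaces bounded by hyperplanes $L_1 \cap U, \ldots, L_k \cap U$ with $L_j \in \cL^{n-1}$. Each $L_j$ is a hyperplane of $E^n$ meeting $U$ transversally, and we choose the side $L_j^{\epsilon_j}$ that contains $P$. The candidate for $Q$ is
\[ Q = U^{\pm} \cap \bigcap_j L_j^{\epsilon_j} \cap (\text{further half-spaces from } \cL^{n-1}), \]
with all bounding hyperplanes in $(\cL \Cup U)^{n-1}$. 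This gives $Q \cap U = P$, so $P$ is a facet of $Q$ provided $Q$ is bounded and $n$-dimensional. The case $n=1$ is direct: $U$ is a point not in $\cL^0 \neq \varnothing$, and a point of $\cL^0$ together with $U$ bounds a segment.

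The real content is boundedness, and this is where admissibility enters. By \cref{admissible-en} and the remark after it, there are hyperplanes $H_1, \ldots, H_n \in \cL^{n-1}$ meeting in a point $x \in \cL^0$, with normals forming a basis. Any sign choice $s$ makes $C_s = \bigcap_i H_i^{s_i}$ a simplicial cone with apex $x$, whose recession cone is spanned by vectors $s_i e_i^*$ for the dual basis $\{e_i^*\}$.

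The key step is to first subdivide $P$ by the hyperplanes $H_i \cap U$ of $U$, for those $H_i$ not parallel to $U$; these hyperplanes lie in $\cL \cap U$. Each resulting piece $P'$ lies in a single closed cone $C_s$. I would then take
\[ Q' = U^{\pm} \cap C_s \cap \bigcap_j L_j^{\epsilon_j}, \]
with the side of $U$ chosen so that $Q'$ is bounded. By \cref{pt_subgroup}-style bookkeeping, it is harmless to subdivide $P$ as much as we like.

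I expect checking boundedness to be the main obstacle. The recession cone of $Q'$ consists of directions $v$ with $v$ in the recession cone of $C_s$ and of each $L_j^{\epsilon_j}$, and with $\pm v \cdot n_U \geq 0$.
\begin{itemize}
\item Directions with $v \cdot n_U = 0$ lie in $U$. There they are killed by the boundedness of $P'$, since they belong to the recession cone of $P' = Q' \cap U$.
\item Directions transverse to $U$ must be excluded by the right choice of side $U^{\pm}$ together with $C_s$. This can fail when the generators $s_i e_i^*$ point to both sides of $U$.
\end{itemize}

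To repair this failure, I would replace the single point $x$ by a suitable choice of vertex. Using \cref{admissible_inherited}, every subspace of $\cL$ contains points of $\cL^0$, so I would pick, for each piece, hyperplanes $H_i \in \cL^{n-1}$ through points of $\cL^0$ chosen so that the relevant cone opens towards $U$ from one side. Such a cone intersected with $U^{\pm}$ is a bounded simplex-like region containing the piece, possibly after subdividing $P$ further along the hyperplanes $H_i \cap U$.

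If this direct cone argument proves too delicate, the fallback is induction on the number of facets of $P$. The idea would be to show that each piece cut out by one additional hyperplane from $\cL^{n-1}$ can be capped off on the appropriate side of $U$.
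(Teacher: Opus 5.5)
Your construction is the paper's: adjoin $n$ hyperplanes of $\cL^{n-1}$ meeting in a point of $\cL^0$, subdivide $P$ along them, and cap the polyhedron $R = C_s \cap \bigcap_j L_j^{\epsilon_j}$ with a closed half-space of $U$. The gap is that you never establish the one step with real content, namely that some side of $U$ makes $Q'$ bounded. You say this can fail when the generators of $\operatorname{rec}(C_s)$ straddle $U$. You then offer a repair (re-choosing vertices in $\cL^0$ so the cone ``opens towards $U$'') that you do not justify and that admissibility does not obviously supply, plus a fallback that is only an outline. So the proof is incomplete exactly at its crux.

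The missing idea is that the relevant cone is not $\operatorname{rec}(C_s)$ but $D = \operatorname{rec}(R)$, and $D$ can never straddle $U$. First, $D \subseteq \operatorname{rec}(C_s)$, so $D$ is pointed. Second, your own first bullet gives $D \cap \vec U = \{0\}$, where $\vec U$ is the direction space of $U$, because $R \cap U = P'$ is non-empty and bounded. Now suppose $v, v' \in D$ with $v \cdot n_U > 0 > v' \cdot n_U$. Then $-(v' \cdot n_U)\,v + (v \cdot n_U)\,v' \in D$ is parallel to $U$, hence zero. This forces $v' = -\lambda v$ with $\lambda > 0$, contradicting pointedness. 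So $D \setminus \{0\}$ lies in one open side of $U$, and intersecting $R$ with the opposite closed side gives a bounded $Q'$. This is exactly the paper's argument, phrased there via the Minkowski decomposition $Q + C$.

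Straddling generators of $\operatorname{rec}(C_s)$ are harmless because the $L_j^{\epsilon_j}$ cut those directions away. For example, in $E^2$ take $C_s = \{x, y \geq 0\}$, $U = \{x - y = 1/2\}$, and $P'$ the segment cut out of $U \cap C_s$ by $x \leq 1$. Then $e_1, e_2$ straddle $U$, but $D$ is spanned by $e_2$ alone.

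You should also check that $Q'$ is $n$-dimensional. A relative interior point of $P'$ lies on none of the $L_j$ or $H_i$: those not parallel to $U$ meet it transversally, and those parallel to $U$ miss it entirely since $U \notin \cL$. Hence a small ball around that point lies in $R$.
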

	
	\vspace{1em}
	\centerline{
	\def\svgwidth{5.2in}
\begingroup%
  \makeatletter%
  \providecommand\color[2][]{%
    \errmessage{(Inkscape) Color is used for the text in Inkscape, but the package 'color.sty' is not loaded}%
    \renewcommand\color[2][]{}%
  }%
  \providecommand\transparent[1]{%
    \errmessage{(Inkscape) Transparency is used (non-zero) for the text in Inkscape, but the package 'transparent.sty' is not loaded}%
    \renewcommand\transparent[1]{}%
  }%
  \providecommand\rotatebox[2]{#2}%
  \newcommand*\fsize{\dimexpr\f@size pt\relax}%
  \newcommand*\lineheight[1]{\fontsize{\fsize}{#1\fsize}\selectfont}%
  \ifx\svgwidth\undefined%
    \setlength{\unitlength}{335.70091985bp}%
    \ifx\svgscale\undefined%
      \relax%
    \else%
      \setlength{\unitlength}{\unitlength * \real{\svgscale}}%
    \fi%
  \else%
    \setlength{\unitlength}{\svgwidth}%
  \fi%
  \global\let\svgwidth\undefined%
  \global\let\svgscale\undefined%
  \makeatother%
  \begin{picture}(1,0.29421624)%
    \lineheight{1}%
    \setlength\tabcolsep{0pt}%
    \put(0,0){\includegraphics[width=\unitlength,page=1]{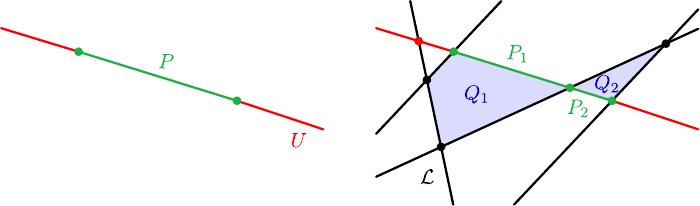}}%
  \end{picture}%
\endgroup%

	}
	\vspace{1em}
	
	\begin{proof}
		Consider the finite collection of hyperplanes in $\cL$ bounding half-spaces that intersect with $U$ to form $P$. If there is no way to form a 0-dimensional space by intersecting some of these hyperplanes, then since $\cL$ is admissible, we can add finitely many additional hyperplanes from $\cL$ to the collection until some subset of them has a 0-dimensional intersection. These additional hyperplanes might further subdivide $P$.
		By considering each piece in this subdivision separately, we may now assume that the $(\cL \cap U)$-polytope $P$ is cut out by intersecting $U$ with half-spaces $L_1^+, \ldots, L_m^+$ bounded by a collection of hyperplanes $L_1, \ldots, L_m \in \cL^{n-1}$ such that some subset of the $L_i$ intersects to form a 0-dimensional subspace. So
		\[ P = L_1^+ \cap \cdots \cap L_m^+ \cap U. \]
		Note that not all of the hyperplanes $L_i$ define facets of $P$; in the figure below, there are three such lines, but only two of them define facets of $P$.

	\vspace{1em}
	\centerline{
	\def\svgwidth{5.8in}
\begingroup%
  \makeatletter%
  \providecommand\color[2][]{%
    \errmessage{(Inkscape) Color is used for the text in Inkscape, but the package 'color.sty' is not loaded}%
    \renewcommand\color[2][]{}%
  }%
  \providecommand\transparent[1]{%
    \errmessage{(Inkscape) Transparency is used (non-zero) for the text in Inkscape, but the package 'transparent.sty' is not loaded}%
    \renewcommand\transparent[1]{}%
  }%
  \providecommand\rotatebox[2]{#2}%
  \newcommand*\fsize{\dimexpr\f@size pt\relax}%
  \newcommand*\lineheight[1]{\fontsize{\fsize}{#1\fsize}\selectfont}%
  \ifx\svgwidth\undefined%
    \setlength{\unitlength}{386.32300499bp}%
    \ifx\svgscale\undefined%
      \relax%
    \else%
      \setlength{\unitlength}{\unitlength * \real{\svgscale}}%
    \fi%
  \else%
    \setlength{\unitlength}{\svgwidth}%
  \fi%
  \global\let\svgwidth\undefined%
  \global\let\svgscale\undefined%
  \makeatother%
  \begin{picture}(1,0.25566342)%
    \lineheight{1}%
    \setlength\tabcolsep{0pt}%
    \put(0,0){\includegraphics[width=\unitlength,page=1]{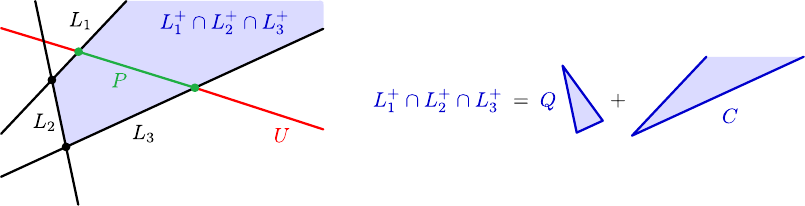}}%
  \end{picture}%
\endgroup%

	}
	\vspace{1em}
		
		The intersection $L_1^+ \cap \cdots \cap L_m^+$ is a convex but not necessarily bounded polyhedron. By the finite basis theorem \cite[Theorem 1.2]{ziegler}, this is expressed as a Minkowski sum of a polytope $Q$ and a convex cone $C$. That is, $C$ is the set of nonnegative linear combinations of some finite set of vectors $\{v_1,\ldots,v_k\}$, and we have
		\begin{equation}\label{eq:cone}
			L_1^+ \cap \cdots \cap L_m^+ = Q + C = \{ \ q + c_1v_1 + \cdots + c_kv_k \ \mid \ q \in Q, \ c_i \geq 0 \ \}.
		\end{equation}
		Since the intersection of the set \eqref{eq:cone} with $U$ is bounded, none of the non-zero vectors in $C$ can be parallel to $U$. Since the hyperplanes $L_1, \ldots, L_m$ have a subset with 0-dimensional intersection, up to a linear change of coordinates, $C$ is a cone in $\R^n$ contained in the set $\{ (x_1,\ldots,x_n) \mid \forall i, \ x_i \geq 0 \}$. Therefore, regardless of the coordinate system we use, $C$ cannot contain a vector and its negative, and more generally it cannot contain a set of the form $\{ v, -\lambda v\}$, with $v \neq 0$ and $\lambda > 0$.

		It follows that $C$ must only contain vectors pointing to one side of $U$, or more precisely there is a single vector $w \in \bR^n$ normal to $U$ such that $v \cdot w > 0$ for every $v \in C$. Otherwise, taking $v,v' \in C$ such that $v \cdot w > 0 > v' \cdot w$, either $v' = -\lambda v$ which is a contradiction, or else the positive linear combination
		\[ -(v' \cdot w)v + (v \cdot w)v' \in C \]
		is non-zero, but it is perpendicular to $w$ and therefore parallel to $U$, a contradiction.

		Letting $U^-$ be the half-space on the opposite side of $U$ from $w$, the intersection $(Q+C) \cap U^-$ does not contain any infinite ray, since the direction of the ray would have a positive dot product with $w$, and would therefore eventually reach the other half-space $U^+$. Therefore $(Q+C) \cap U^-$ is bounded and hence a polytope \cite[Def 0.1]{ziegler}. It is an $(\cL \Cup U)$-polytope, because $(Q+C) \cap U^- = L_1^+ \cap \cdots \cap L_m^+ \cap U^-$.
		
		It remains to see that $(Q+C) \cap U^-$ is $n$-dimensional and has $P$ as a facet. Since $P$ is $(n-1)$-dimensional, it has a point $x$ in its interior. The point $x$ must also be in the interior of $L_1^+ \cap \cdots \cap L_m^+$, since otherwise it would lie in an intersection $L_i \cap U$, and therefore be in the boundary of $P$. Therefore some small open ball about $x$ is contained in the interior of $L_1^+ \cap \cdots \cap L_m^+$, and intersecting this with $U^-$ gives an open set contained in $(Q+C) \cap U^-$. Therefore $(Q+C) \cap U^-$ is $n$-dimensional. It has $P$ as a facet by construction, because $(Q + C) \cap U = L_1^+ \cap \cdots \cap L_m^+ \cap U = P$.
	\end{proof}

	\begin{lemma}\label{pt_quotient}
		If $\cL$ is admissible, $n \geq 1$, and $U \notin \cL$ is a hyperplane, then there is an exact sequence of abelian groups
		\[ 0 \longrightarrow \Pt^{\cL}(E^n) \longrightarrow \Pt^{\cL \Cup U}(E^n) \longrightarrow \Pt^{\cL \cap U}(U) \longrightarrow 0. \]
	\end{lemma}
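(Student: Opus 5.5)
The first map is the inclusion: every convex $\cL$-polytope is a convex $(\cL \Cup U)$-polytope, and every subdivision relation among $\cL$-polytopes is also one among $(\cL\Cup U)$-polytopes. Its injectivity is immediate from \cref{pt_subgroup}: both groups embed compatibly into $C(E^n,\Z)$ via indicator functions, so the composite $\Pt^\cL(E^n) \to \Pt^{\cL\Cup U}(E^n) \to C(E^n,\Z)$ is injective.

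For the second map I will use a ``jump across $U$'' construction. Fix a unit normal $w$ to $U$, with half-spaces $U^+$ (the side $w$ points into) and $U^-$. Given a convex $(\cL\Cup U)$-polytope $Q$, set
\[ \partial_U[Q] \coloneq [\,Q\cap U\,]_{\text{if } Q\subseteq U^-} \;-\; [\,Q\cap U\,]_{\text{if } Q\subseteq U^+}, \]
meaning: if $Q$ lies on one side of $U$ and $Q\cap U$ is a facet, record that facet with sign $+1$ (for $U^-$) or $-1$ (for $U^+$); otherwise first cut $Q$ along $U$ into $Q\cap U^\pm$ and add the two contributions. Equivalently, and this is how I will check well-definedness, $\partial_U$ is induced on indicator functions by $f\mapsto (x\mapsto \lim_{t\to 0^+} f(x-tw)-f(x+tw))$ for generic $x\in U$, which takes values in $C(U,\Z)$ modulo the finitely many $(n-2)$-planes $L\cap U$. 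This description is manifestly additive, so it respects subdivisions. It also lands in the image of $\Pt^{\cL\cap U}(U)\hookrightarrow C(U,\Z)$, since $Q\cap U$ is cut out in $U$ by hyperplanes $L\cap U$ with $L\in\cL^{n-1}$. So $\partial_U$ is a well-defined homomorphism. It kills $\Pt^\cL(E^n)$, because an $\cL$-polytope has no hyperplane of its boundary along $U\notin\cL$: its indicator function is locally constant across generic points of $U$. Surjectivity follows from \cref{new_generators}: each $(\cL\cap U)$-polytope subdivides into pieces $P_i$ that are facets of convex $Q_i$, and after possibly intersecting $Q_i$ with $U^-$ (still an $(\cL\Cup U)$-polytope with facet $P_i$) we get $\partial_U[Q_i]=[P_i]$.

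The main obstacle is exactness in the middle: if $\partial_U(x)=0$ then $x\in \Pt^\cL(E^n)$. My plan is to work in $C(E^n,\Z)$. Write $x=\sum m_j[Q_j]$ and subdivide along all relevant hyperplanes, including $U$, so that $x$ is a combination of pieces lying in $U^+$ or $U^-$. Pair them up across $U$ as follows. For each piece $R\subseteq U^-$ with facet $R\cap U$, use the cone argument of \cref{new_generators} to form the unbounded region on the other side and cut it back using $\cL$-hyperplanes. Concretely, I want to show that $R\cup R'$ is (up to subdivision) an $\cL$-polytope for a suitable $R'\subseteq U^+$ with $R'\cap U = R\cap U$. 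Then $x-\sum(\pm)[R\cup R']$ consists of pieces whose jump across $U$ is zero. Since the combinatorics is matched piece-by-piece with the same $\cL$-walls, the indicator function of the remainder has no discontinuity along $U$. Hence its pieces on both sides can be merged, and $U$ can be erased, giving an element of $\Pt^\cL$.

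Making this rigorous requires that the chambers of the arrangement $\cL$-walls restricted near $U$ are bounded after adding finitely many $\cL$-hyperplanes. That is exactly where admissibility enters, as in \cref{new_generators}. If this direct gluing becomes messy, my fallback is a bounded-chamber argument. Choose finitely many $\cL$-hyperplanes, with a vertex by admissibility, so that all polytopes lie in a union of bounded $\cL$-chambers after adding a bounding box built from $\cL$-hyperplanes. On each $\cL$-chamber $C$ cut by $U$ into $C^\pm$, the kernel condition forces the coefficients of $C^+$ and $C^-$ to agree. The combination then equals a multiple of $[C]$, which lies in $\Pt^\cL$.
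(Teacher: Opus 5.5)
Your jump map $\partial_U$ is a good device. It is well defined because $\Pt^{\cL\Cup U}(E^n)$ embeds in $C(E^n,\Z)$ and $\Pt^{\cL\cap U}(U)$ embeds in $C(U,\Z)$ by \cref{pt_subgroup}. Injectivity, the vanishing of $\partial_U$ on $\Pt^{\cL}(E^n)$, and surjectivity via \cref{new_generators} are also fine. (In the surjectivity step, use $-[Q_i]$ when $Q_i\subseteq U^+$; intersecting with $U^-$ does not help.)

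The gap is exactness in the middle. Your main plan needs, for each piece $R\subseteq U^-$ with a facet on $U$, some $R'\subseteq U^+$ with the same facet such that $R\cup R'$ is an $\cL$-polytope. Such an $R'$ need not exist. In $E^2$, let $\cL$ be generated by the lines $x=0$, $y=0$, $x+y=1$ (this is admissible) and let $U=\{x=5\}$. The triangle $R$ with vertices $(1,0),(5,0),(5,-4)$ is a convex $(\cL\Cup U)$-polytope in $\{x\le 5\}$ with a facet on $U$, but no convex $(\cL\Cup U)$-polytope lies in $\{x\ge 5\}$ at all. Your final step (``the remainder has no jump along $U$, so $U$ can be erased'') is just a restatement of what has to be proved.

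The fallback breaks on the same example. Admissibility gives a vertex, not a bounding box of $\cL$-hyperplanes: here the only bounded $\cL$-chamber is the triangle $x,y\ge 0$, $x+y\le 1$. So you cannot assume every piece lies in a bounded $\cL$-chamber. For a kernel element this is true only a posteriori, and the pieces in unbounded $\cL$-chambers are exactly the case your argument skips.

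The fallback becomes a complete proof once unbounded chambers are included. It needs no admissibility; that hypothesis is used only for surjectivity.
\begin{itemize}
\item Write $x=\sum_j m_j[Q_j]$, and let $\mathcal{A}$ be the finite set of hyperplanes defining the $Q_j$, together with $U$.
\item Subdivide to get $x=\sum_C c_C[\overline{C}]$ over the bounded chambers $C$ of $\mathcal{A}$, with the convention $c_C=0$ for unbounded chambers.
\item A chamber $D$ of $\mathcal{A}\setminus\{U\}$ has all its walls in $\cL^{n-1}$. Either $D$ misses $U$, and is then itself a chamber of $\mathcal{A}$, or $D$ is split into chambers $D^{\pm}$ of $\mathcal{A}$ meeting along the open cell $D\cap U$.
\item On $D\cap U$ the jump of $x$ is the constant $c_{D^-}-c_{D^+}$, so $\partial_U x=0$ forces $c_{D^+}=c_{D^-}$.
\item If $D$ is unbounded, at least one half is unbounded, so both coefficients vanish. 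If $D$ is bounded, the two halves recombine to $c_D[\overline{D}]$.
\end{itemize}
Hence $x=\sum_{D\text{ bounded}} c_D[\overline{D}]\in\Pt^{\cL}(E^n)$.

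The paper instead builds an explicit inverse $\Pt^{\cL\cap U}(U)\to\Pt^{\cL\Cup U}(E^n)/\Pt^{\cL}(E^n)$. It then has to check, by two separate geometric arguments, that two lifts on the same side of $U$ differ by $\cL$-polytopes, and that two lifts on opposite sides glue to an $\cL$-polytope. The repaired chamber count replaces both checks with one step.
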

	
	Note that this fails when $U \in \cL$ because then the left map is an isomorphism but the third group is non-trivial.

	\begin{proof}
		The right map $\Pt^{\cL}(E^n) \to \Pt^{\cL \Cup U}(E^n)$ is injective as it fits in a commutative square
		\[\begin{tikzcd}\Pt^\cL(E^n) \rar \dar[hook] & \Pt^{\cL \Cup U}(E^n) \dar[hook] \\[-5pt]
			C(X^n,\Z) \rar[equal] & C(X^n,\Z)
		\end{tikzcd}\]
		where the vertical maps are injective by \cref{pt_subgroup}.

		Fixing a choice of normal direction to $U$ we get half-spaces $U^+,U^- \subseteq E^n$. We construct a map
		\begin{equation}\label{pt_quotient_identification-en}
			\Pt^{\cL \Cup U}(E^n) \big / \Pt^{\cL}(E^n) \longrightarrow \Pt^{\cL \cap U}(U)
		\end{equation}
		by taking each convex $(\cL \Cup U)$-polytope $Q$ to its facet (if any) that lies along $U$, with positive sign if $Q \subseteq U^+$ and negative sign if $Q \subseteq U^-$. Note that if neither of these containments hold then $Q$ does not have a facet along $U$, and the image of $Q$ along \eqref{pt_quotient_identification-en} is zero. This map is well-defined because it sends a convex $\cL$-polytope to zero, and subdividing along a hyperplane either does not change the result (if the hyperplane does not intersect $U$), subdivides the result (the hyperplane intersects $U$ transversally), or adds new polytopes that cancel each other out (the hyperplane is $U$ itself).
		
	\vspace{1em}
	\centerline{
	\def\svgwidth{5.3in}
\begingroup%
  \makeatletter%
  \providecommand\color[2][]{%
    \errmessage{(Inkscape) Color is used for the text in Inkscape, but the package 'color.sty' is not loaded}%
    \renewcommand\color[2][]{}%
  }%
  \providecommand\transparent[1]{%
    \errmessage{(Inkscape) Transparency is used (non-zero) for the text in Inkscape, but the package 'transparent.sty' is not loaded}%
    \renewcommand\transparent[1]{}%
  }%
  \providecommand\rotatebox[2]{#2}%
  \newcommand*\fsize{\dimexpr\f@size pt\relax}%
  \newcommand*\lineheight[1]{\fontsize{\fsize}{#1\fsize}\selectfont}%
  \ifx\svgwidth\undefined%
    \setlength{\unitlength}{359.62324161bp}%
    \ifx\svgscale\undefined%
      \relax%
    \else%
      \setlength{\unitlength}{\unitlength * \real{\svgscale}}%
    \fi%
  \else%
    \setlength{\unitlength}{\svgwidth}%
  \fi%
  \global\let\svgwidth\undefined%
  \global\let\svgscale\undefined%
  \makeatother%
  \begin{picture}(1,0.24066846)%
    \lineheight{1}%
    \setlength\tabcolsep{0pt}%
    \put(0,0){\includegraphics[width=\unitlength,page=1]{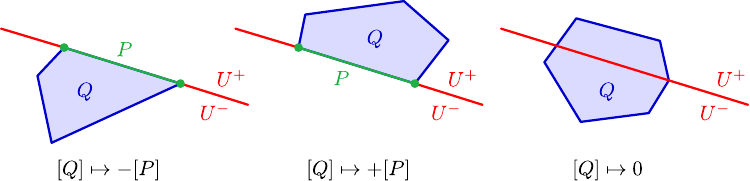}}%
  \end{picture}%
\endgroup%

	}
	\vspace{1em}
	
		We construct an inverse to \eqref{pt_quotient_identification-en} by taking each convex $(\cL \cap U)$-polytope $P \subseteq U$, using \cref{new_generators} to subdivide it into pieces $P_i$ that are facets of convex $(\cL \Cup U)$-polytopes $Q_i$ in $E^n$, then taking the polytopes $\pm [Q_i]$, again with a positive sign when $Q_i \subseteq U^+$ and a negative sign when $Q_i \subseteq U^-$. It is clear that this is an inverse if it is well-defined.

		To show that our proposed inverse is well-defined, we first observe that any two subdivisions of $P$ in which the pieces are faces of convex $(\cL \Cup U)$-polytopes in $E^n$, have a common refinement in which each piece is the face of a convex $(\cL \Cup U)$-polytope in $E^n$. So without loss of generality, we have one convex polytope $P \subseteq U$ that is the face of two convex polytopes $Q, Q' \subseteq E^n$ and must show that these two polytopes give the same element of the quotient $\Pt^{\cL \Cup U}(E^n) \big / \Pt^{\cL}(E^n)$.
		
	\vspace{1em}
	\centerline{
	\def\svgwidth{1.8in}
\begingroup%
  \makeatletter%
  \providecommand\color[2][]{%
    \errmessage{(Inkscape) Color is used for the text in Inkscape, but the package 'color.sty' is not loaded}%
    \renewcommand\color[2][]{}%
  }%
  \providecommand\transparent[1]{%
    \errmessage{(Inkscape) Transparency is used (non-zero) for the text in Inkscape, but the package 'transparent.sty' is not loaded}%
    \renewcommand\transparent[1]{}%
  }%
  \providecommand\rotatebox[2]{#2}%
  \newcommand*\fsize{\dimexpr\f@size pt\relax}%
  \newcommand*\lineheight[1]{\fontsize{\fsize}{#1\fsize}\selectfont}%
  \ifx\svgwidth\undefined%
    \setlength{\unitlength}{120.94785273bp}%
    \ifx\svgscale\undefined%
      \relax%
    \else%
      \setlength{\unitlength}{\unitlength * \real{\svgscale}}%
    \fi%
  \else%
    \setlength{\unitlength}{\svgwidth}%
  \fi%
  \global\let\svgwidth\undefined%
  \global\let\svgscale\undefined%
  \makeatother%
  \begin{picture}(1,0.63641793)%
    \lineheight{1}%
    \setlength\tabcolsep{0pt}%
    \put(0,0){\includegraphics[width=\unitlength,page=1]{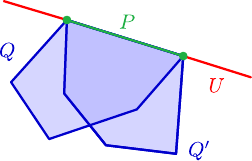}}%
  \end{picture}%
\endgroup%

	}
	\vspace{1em}
	
		The first case is that $Q$ and $Q'$ are on the same side of $U$, and we need to show that the difference $[Q']-[Q]$ lies in the subgroup $\Pt^{\cL}(E^n)$. Break up the union $Q \cup Q'$ into three pieces: the intersection $Q \cap Q'$ and the differences $Q \setminus Q'$ and $Q' \setminus Q$. The intersection is a convex polytope that also has $P$ as its facet along $U$. The differences are polytopes (finite unions of convex polytopes) that cannot have any facets along $U$, since their intersection with $U$ is contained in $\partial P$. Therefore the differences define elements of $\Pt^{\cL}(E^n)$, and we have
		\[ [Q']-[Q] = [Q' \setminus Q] - [Q \setminus Q'] \in \Pt^{\cL}(E^n). \]

	\vspace{1em}
	\centerline{
	\def\svgwidth{1.8in}
\begingroup%
  \makeatletter%
  \providecommand\color[2][]{%
    \errmessage{(Inkscape) Color is used for the text in Inkscape, but the package 'color.sty' is not loaded}%
    \renewcommand\color[2][]{}%
  }%
  \providecommand\transparent[1]{%
    \errmessage{(Inkscape) Transparency is used (non-zero) for the text in Inkscape, but the package 'transparent.sty' is not loaded}%
    \renewcommand\transparent[1]{}%
  }%
  \providecommand\rotatebox[2]{#2}%
  \newcommand*\fsize{\dimexpr\f@size pt\relax}%
  \newcommand*\lineheight[1]{\fontsize{\fsize}{#1\fsize}\selectfont}%
  \ifx\svgwidth\undefined%
    \setlength{\unitlength}{120.94785273bp}%
    \ifx\svgscale\undefined%
      \relax%
    \else%
      \setlength{\unitlength}{\unitlength * \real{\svgscale}}%
    \fi%
  \else%
    \setlength{\unitlength}{\svgwidth}%
  \fi%
  \global\let\svgwidth\undefined%
  \global\let\svgscale\undefined%
  \makeatother%
  \begin{picture}(1,0.78727894)%
    \lineheight{1}%
    \setlength\tabcolsep{0pt}%
    \put(0,0){\includegraphics[width=\unitlength,page=1]{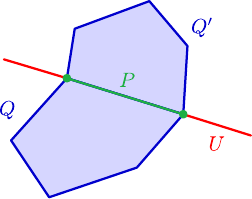}}%
  \end{picture}%
\endgroup%

	}
	\vspace{1em}
	
		The second case is that $Q$ and $Q'$ are on opposite sides of $U$, and we need to show that the sum $[Q']+[Q]$ lies in the subgroup $\Pt^{\cL}(E^n)$. The union $Q \cup Q'$ is a polytope, not necessarily convex, and we have to show that it can be defined using only the hyperplanes in $\cL$, not using $U$. If we take all the half-spaces defining $Q$ other than $U$, they define a polyhedron that contains $P$ and is not necessarily bounded. However, the hyperplanes defining $Q'$ other than $U$ must bound this region, for if not then there is some ray in $E^n$ that starts in $Q$, crosses $U$, and then continues without ever touching the sides of $Q'$, which is impossible because $Q'$ is bounded. Therefore, if we cut up $E^n$ along all the hyperplanes defining $Q$ and $Q'$ other than $U$, we get finitely many bounded regions, whose union contains $Q \cup Q'$. Each of these bounded regions is either inside the union $Q \cup Q'$ or outside, as otherwise there would be a straight-line path from the interior of $Q \cup Q'$ to the exterior that never crosses the hyperplanes defining $Q$ and $Q'$ (other than $U$), and this is impossible. In summary, $Q \cup Q'$ is a union of convex $\cL$-polytopes, so that $[Q]+[Q'] \in \Pt^{\cL}(E^n)$ as required.

	\vspace{1em}
	\centerline{
	\def\svgwidth{1.8in}
\begingroup%
  \makeatletter%
  \providecommand\color[2][]{%
    \errmessage{(Inkscape) Color is used for the text in Inkscape, but the package 'color.sty' is not loaded}%
    \renewcommand\color[2][]{}%
  }%
  \providecommand\transparent[1]{%
    \errmessage{(Inkscape) Transparency is used (non-zero) for the text in Inkscape, but the package 'transparent.sty' is not loaded}%
    \renewcommand\transparent[1]{}%
  }%
  \providecommand\rotatebox[2]{#2}%
  \newcommand*\fsize{\dimexpr\f@size pt\relax}%
  \newcommand*\lineheight[1]{\fontsize{\fsize}{#1\fsize}\selectfont}%
  \ifx\svgwidth\undefined%
    \setlength{\unitlength}{119.62316696bp}%
    \ifx\svgscale\undefined%
      \relax%
    \else%
      \setlength{\unitlength}{\unitlength * \real{\svgscale}}%
    \fi%
  \else%
    \setlength{\unitlength}{\svgwidth}%
  \fi%
  \global\let\svgwidth\undefined%
  \global\let\svgscale\undefined%
  \makeatother%
  \begin{picture}(1,0.7959971)%
    \lineheight{1}%
    \setlength\tabcolsep{0pt}%
    \put(0,0){\includegraphics[width=\unitlength,page=1]{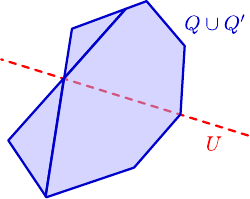}}%
  \end{picture}%
\endgroup%

	}
	\vspace{1em}
	
	\end{proof}

	\subsection{Proof in the Euclidean case}
	
	\begin{theorem}[Solomon-Tits for $\cL$-polytopes in $E^n$]\label{main-en}
		For all $n \geq 1$, if $\cL$ is admissible then $\T^\cL(E^n)$ is equivalent to a wedge of $(n-1)$-spheres and the apartment classes of \cref{apartment_pt} induce an isomorphism
		\[\apt \colon \Pt^\cL(E^n) \overset{\cong}\longrightarrow \widetilde H_{n}(\ST^\cL(E^n)). \]
	\end{theorem}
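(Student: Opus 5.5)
We argue by induction on the number of hyperplanes, after reducing to the case where $\cL^{n-1}$ is finite. Both $\T^\cL(E^n)$ and $\Pt^\cL(E^n)$ are filtered colimits over finite admissible subcollections. Every finite set of hyperplanes in $\cL^{n-1}$ lies in a finite subset whose normals have rank $n$. The geometric realisation and homology commute with filtered colimits of subposets, and so does $\Pt$: a generator or relation involves only finitely many hyperplanes, and injectivity along the colimit follows from \cref{pt_subgroup}. A filtered colimit of wedges of $(n-1)$-spheres along maps is only a homology statement. To upgrade it to a homotopy statement we will separately show that $\T^\cL(E^n)$ is $(n-2)$-connected; since it has dimension $n-1$, this suffices. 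So assume $\cL^{n-1}$ is finite, and start from a minimal admissible collection: $n$ hyperplanes with independent normals. Then $\cL$ is the coordinate arrangement, $\T^\cL(E^n)$ is the boundary of the barycentric subdivision of an $(n-1)$-simplex, hence $\simeq S^{n-1}$. The group $\Pt^\cL(E^n)$ is $0$, since there are no bounded $n$-dimensional regions. Here we must be careful: this is inconsistent, since $\widetilde H_n(\ST)=\Z$ would have to match. The correct base is $n+1$ hyperplanes in general position, where $\Pt\cong\Z$ generated by a simplex. More cleanly, we start instead from the empty-polytope situation and allow the inductive step to build everything, handling the base case $n=1$ and collections of rank $<n$ by the remark after \cref{admissible-en}. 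Concretely, we induct simultaneously on $n$ and on $|\cL^{n-1}|$, with the statement for non-admissible finite collections being that $\Pt=0$ and $\ST$ is a sphere of the appropriate lower dimension, contributing nothing in degree $n$.

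The inductive step adds one hyperplane $U\notin\cL$. On the algebraic side we have the short exact sequence of \cref{pt_quotient}. On the topological side we need a cofibre sequence
\[\ST^{\cL}(E^n)\longrightarrow \ST^{\cL\Cup U}(E^n)\longrightarrow \Sigma\,\ST^{\cL\cap U}(U)\]
or, equivalently, $\T^{\cL\Cup U}/\T^{\cL}\simeq \Sigma\, \ST^{\cL\cap U}(U)\simeq\Sigma^2\T^{\cL\cap U}(U)$. To obtain it, we first compare $\T^\cL$ with $\T^{\cL\uplus U}$. The new elements of $\cL\uplus U$ are the subspaces $L\cap U\notin\cL$, and for each such $V$ the link of $V$ consisting of elements of $\cL$ above it is contractible: its minimum is the intersection $\bigcap\{L\in\cL: L\supseteq V\}$, which lies in $\cL$. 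Quillen's fibre lemma, or adding these subspaces one at a time in order of decreasing dimension, therefore gives $\T^{\cL}\simeq\T^{\cL\uplus U}$. Adding the single vertex $U$ then attaches a cone on its link. The link is the join of the part below $U$, namely $\T^{\cL\cap U}(U)$, with the part above $U$. The part above is empty, since $U$ is a hyperplane and $E^n$ is excluded. Hence $\T^{\cL\Cup U}\simeq \T^{\cL\uplus U}\cup C\,\T^{\cL\cap U}(U)$, which is exactly the cofibre sequence above.

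By induction, using \cref{admissible_inherited} to know that $\cL\cap U$ is admissible when $n\ge2$, and \cref{st-n0} for $n=1$, the space $\T^{\cL\cap U}(U)$ is a wedge of $(n-2)$-spheres. Connectivity is then preserved: attaching a cone along an $(n-3)$-connected space to an $(n-2)$-connected space is $(n-2)$-connected, and dimension $n-1$ gives a wedge of $(n-1)$-spheres. In homology we get a short exact sequence
\[0\to \widetilde H_n\ST^\cL(E^n)\to\widetilde H_n\ST^{\cL\Cup U}(E^n)\to\widetilde H_{n-1}\ST^{\cL\cap U}(U)\to0,\]
where exactness on the left uses the vanishing of $\widetilde H_{n-1}\ST^{\cL\cap U}(U)$ one degree up. We map the sequence of \cref{pt_quotient} into it by $\apt$ and apply the five lemma.

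The main obstacle is checking that the square
\[\Pt^{\cL\Cup U}(E^n)\to \Pt^{\cL\cap U}(U), \qquad \widetilde H_n\ST^{\cL\Cup U}(E^n)\to \widetilde H_{n-1}\ST^{\cL\cap U}(U)\]
commutes up to a global sign. Take a convex $Q$ with facet $P$ on $U$. Choose an apartment-like map $\partial Q\to\T^{\cL\Cup U}$, which exists by \cref{apt-like-preserved-3-pt}. It sends $P$ into $\CT^{\cL\cap U}(U)$, which is the cone on $\T^{\cL\cap U}(U)$ with cone point $U$, and all other facets into the $\cL\uplus U$ part, which is equivalent to $\T^\cL$. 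The connecting map collapses the latter, so the image is the apartment class of $P/\partial P$ in $\ST^{\cL\cap U}(U)$, restricted to $\partial P$ as an apartment-like map for $\cL\cap U$. Facets not on $U$ contribute zero, matching the definition of the quotient map. The sign comes from the orientation convention, using $U^\pm$. Injectivity at the left end is forced by the diagram once the outer maps are isomorphisms. Finally, the isomorphism persists through the filtered colimit.
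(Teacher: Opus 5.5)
Your inductive step is the paper's: the retraction $\T^{\cL}(E^n)\simeq\T^{\cL\uplus U}(E^n)$, the pushout attaching $\CT^{\cL\cap U}(U)$ along $\T^{\cL\cap U}(U)$, the short exact sequence of \cref{pt_quotient}, the five lemma, and the check that the connecting map sends $[Q]$ to the apartment class of its facet on $U$. Replacing the paper's Zorn's lemma argument by a filtered colimit over finite admissible subcollections, with $(n-2)$-connectivity passed to the colimit by compactness, is equally fine.

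\textbf{The gap is the base of the induction on hyperplanes.} Your computation for $n$ hyperplanes $L_1,\dots,L_n$ with independent normals is wrong. The point $x=L_1\cap\cdots\cap L_n$ is itself a vertex of $\T^{\cL}(E^n)$, and it is contained in every other element of $\cL\setminus\{E^n\}$. So $\T^{\cL}(E^n)$ is a cone on $x$ and is contractible. (You took the order complex of proper non-empty subsets and dropped the full intersection.) Hence $\widetilde H_n(\ST^{\cL}(E^n))=0=\Pt^{\cL}(E^n)$. There is no inconsistency, and this is exactly the paper's base case. Every finite admissible collection contains such an $n$-tuple, so it is reached from it by adding hyperplanes one at a time through admissible collections, which is all your step needs.

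\textbf{Neither of your replacement bases works as stated.}

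The configuration of $n+1$ hyperplanes in general position cannot anchor the induction. Many admissible collections (e.g.\ all hyperplanes through one common point) contain no such configuration.

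Starting from non-admissible collections breaks at the step where the rank jumps from $n-1$ to $n$. Your step invokes \cref{pt_quotient} and \cref{admissible_inherited}, and both require the smaller collection $\cL$ to be admissible. Your connectivity count also needs $\T^{\cL}(E^n)$ to be $(n-2)$-connected. At that step, however, $\T^{\cL}(E^n)$ and $\T^{\cL\cap U}(U)$ are in general wedges of lower-dimensional spheres. That transition needs a separate argument, which you do not give. For instance: if every hyperplane of $\cL$ contains a direction $v$ transverse to $U$, then $L\mapsto L\cap U$ is an isomorphism $\T^{\cL}(E^n)\cong\T^{\cL\cap U}(U)$ whose inverse is the retraction. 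So the cone is attached along an equivalence and the result is contractible. Every region is still unbounded, so $\Pt^{\cL\Cup U}(E^n)=0$.

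\textbf{Smaller slips.}
First, $\T^{\cL\Cup U}(E^n)/\T^{\cL}(E^n)\simeq\Sigma\T^{\cL\cap U}(U)\simeq\ST^{\cL\cap U}(U)$, not a double suspension.
Second, left exactness uses $\widetilde H_n(\ST^{\cL\cap U}(U))=0$, and right exactness uses $\widetilde H_{n-1}(\ST^{\cL}(E^n))=0$.
Third, for $n=1$ the relevant input is \cref{n0-once-suspended}, not \cref{st-n0}.
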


	\begin{proof}The proof is by induction on $n$.
	In the initial case $n = 1$, $\cL$ is admissible so long as $\cL^0 = \cL \setminus \{E^1\}$ is non-empty. In this case, we recognise $\ST^\cL(E^1)$ as the unreduced suspension of the set $\cL^0$, so it is equivalent a wedge of circles. Moreover, the map
		\[\apt \colon \Pt^\cL(E^1) \longrightarrow \widetilde{H}_1(\ST^\cL(E^1)) \cong \ker(\Z\langle \cL^0 \rangle \overset{\epsilon}\longrightarrow \Z)\]
	sends the convex polytope given by the interval bounded by $p<p'$ to the class $[p']-[p]$. It is easy to check that this map is an isomorphism.

	\smallskip
	
	We now assume $n \geq 2$ and that the theorem has been proven for the previous value of $n$. We shall apply Zorn's lemma to the partially ordered set $\mathcal{P}_\cL$ of admissible subsets $\cL' \subseteq \cL$ for which the conclusion of the theorem is true, ordered by inclusion. There are three main steps:
	\begin{itemize}
		\item proving that $\mathcal{P}_\cL$ is non-empty,
		\item adding one hyperplane $U$ to $\cL' \in \mathcal{P}_\cL$, and proving that $\cL' \Cup U \in \mathcal{P}_\cL$, and
		\item taking a colimit of collections $\cup_{\lambda} \cL_\lambda$ with $\cL_\lambda \in \mathcal{P}_\cL$ and proving that $\cup_{\lambda} \cL_\lambda \in \mathcal{P}_\cL$.
	\end{itemize}
	
	For the first step, we prove $\mathcal{P}_\cL$ is non-empty. Since $\cL$ is admissible, we can find $n$ hyperplanes in $\cL^{n-1}$ whose intersection is a point $x \in E^n$. Let $\cL_\circ \subseteq \cL$ consist of these hyperplanes and their intersections. Then $\T^{\cL_\circ}(E^n)$ is contractible as $x$ is an initial object, while $\smash{\Pt^{\cL_\circ}(E^n)} = 0$ as every region cut out by these hyperplanes is unbounded, so $\cL_\circ \in \mathcal{P}_\cL$.
	
	\medskip

	For the second key inductive step, we prove that for $\cL' \in \mathcal{P}_{\cL}$ and a hyperplane $U \in \cL \backslash \cL'$ we have $\cL' \Cup U \in \mathcal{P}_{\cL}$. Recall from \cref{cup_and_cap_def} that $\cL' \Cup U$ is the collection obtained from $\cL'$ by adding in $U$ and all of its intersections with subspaces in $\cL'$, whereas $\cL' \uplus U$ means that we add the intersections of $U$ with subspaces of $\cL'$ but not $U$ itself. We can thus factor the inclusion $\smash{\T^{\cL'}(E^n) \to \T^{\cL' \Cup U}(E^n)}$ as the top row of the following diagram.
	\begin{equation}\label{eqn:fact-pushout} \begin{tikzcd} \T^{\cL'}(E^n) \rar{\simeq} &  \T^{\cL' \uplus U}(E^n) \rar & \T^{\cL' \Cup U}(E^n) \\[-5pt]
		& \T^{\cL' \cap U}(U) \rar \uar & \CT^{\cL' \cap U}(U) \uar \end{tikzcd}\end{equation}	
	In this commutative diagram the square is a pushout, as the downward link of $U$ in $\smash{\T^{\cL' \Cup U}(E^n)}$ comes from the subposet of those subspaces that are properly contained in $U$, which is by definition $\smash{\T^{\cL' \cap U}(U)}$. Moreover, the top-left map of \eqref{eqn:fact-pushout} is an equivalence because it admits a homotopy inverse, given by the map of posets
	\[
		\big(V \in \cL' \uplus U \big) \longmapsto \left(\bigcap_{V \subseteq V' \in \cL'} V' \in \cL' \right).\]
	This is well-defined, as any element of $\cL' \uplus U$ is obtained by intersecting some elements of $\cL'$ possibly with $U$. It is an order-increasing map, $V \subseteq f(V)$, that is the identity on $\smash{\T^{\cL'}(E^n)}$, so induces a deformation retract upon realisation.

	\vspace{1em}
	\centerline{
	\def\svgwidth{4.5in}
\begingroup%
  \makeatletter%
  \providecommand\color[2][]{%
    \errmessage{(Inkscape) Color is used for the text in Inkscape, but the package 'color.sty' is not loaded}%
    \renewcommand\color[2][]{}%
  }%
  \providecommand\transparent[1]{%
    \errmessage{(Inkscape) Transparency is used (non-zero) for the text in Inkscape, but the package 'transparent.sty' is not loaded}%
    \renewcommand\transparent[1]{}%
  }%
  \providecommand\rotatebox[2]{#2}%
  \newcommand*\fsize{\dimexpr\f@size pt\relax}%
  \newcommand*\lineheight[1]{\fontsize{\fsize}{#1\fsize}\selectfont}%
  \ifx\svgwidth\undefined%
    \setlength{\unitlength}{306.0804212bp}%
    \ifx\svgscale\undefined%
      \relax%
    \else%
      \setlength{\unitlength}{\unitlength * \real{\svgscale}}%
    \fi%
  \else%
    \setlength{\unitlength}{\svgwidth}%
  \fi%
  \global\let\svgwidth\undefined%
  \global\let\svgscale\undefined%
  \makeatother%
  \begin{picture}(1,0.49351584)%
    \lineheight{1}%
    \setlength\tabcolsep{0pt}%
    \put(0,0){\includegraphics[width=\unitlength,page=1]{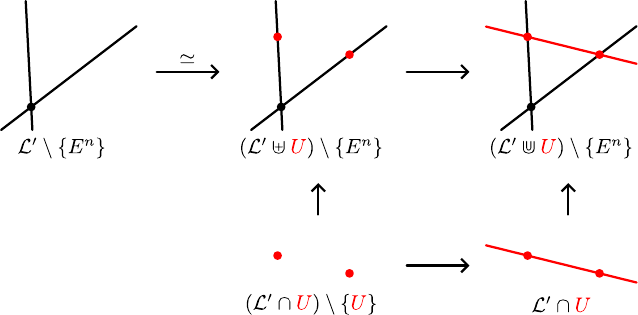}}%
  \end{picture}%
\endgroup%

	}
	\vspace{1em}

	By \cref{admissible_inherited}, the collection of hyperplanes $\smash{\cL' \cap U}$ is admissible in $U$, so $\smash{\T^{\cL' \cap U}(U)}$ is a wedge of $(n-2)$-spheres by the inductive hypothesis. As we assumed that $\smash{\T^{\cL'}(E^n)}$ is a wedge of $(n-1)$-spheres, we get a homotopy pushout square of the form
	\begin{equation*}\label{eqn:fact-pushout-2} \begin{tikzcd} \bigvee S^{n-1} \rar & \T^{\cL' \Cup U}(E^n) \\[-5pt]
		\bigvee S^{n-2} \rar \uar & {*}\uar \end{tikzcd}
	\end{equation*}
	in which the left-hand vertical map must be nullhomotopic, and so $\T^{\cL' \Cup U}(E^n)$ is equivalent to a wedge of $(n-1)$-spheres, completing the proof of the first half of the theorem for $\cL' \Cup U$.

	The second half of the theorem requires us to prove that the apartment map is an isomorphism. The above homotopy pushout square provides a short exact sequence
	\[0 \longrightarrow \widetilde{H}_n(\ST^{\cL'}(E^n)) \longrightarrow \widetilde{H}_n(\ST^{\cL' \Cup U}(E^n)) \longrightarrow \widetilde{H}_{n-1}(\ST^{\cL' \cap U}(U)) \longrightarrow 0.\]
	As the apartment map is natural with respect to inclusions of collections of subspaces $\cL$, using \cref{pt_quotient} we can extend this to a map of short exact sequences
	\begin{equation}\label{st_en_map_of_les}
	\begin{tikzcd}[column sep = 1.5em] 0 \rar & \Pt^{\cL'}(E^n) \rar \dar{\apt}[swap]{\cong} & \Pt^{\cL' \Cup U}(E^n) \dar{\apt} \rar & \Pt^{\cL' \cap U}(U) \dar[dashed] \rar & 0 \\[-5pt]
		0 \rar & \widetilde{H}_n(\ST^{\cL'}(E^n)) \rar &  \widetilde{H}_n(\ST^{\cL' \Cup U}(E^n)) \rar & \widetilde{H}_{n-1}(\ST^{\cL' \cap U}(U)) \rar & 0.\end{tikzcd}
	\end{equation}
	We claim that the dashed map agrees with the apartment map inside the geometry $U$, if we give $U$ the ``induced'' orientation from $U^+$, which inherits an orientation from $E^n$. To see this, we identify the reduced homology of $\ST$ with the homology of the pair $(\CT,\T)$, so that apartment maps are given by maps of pairs $(Q,\partial Q) \to (\CT,\T)$. Under this identification, the bottom-right surjective homomorphism in \eqref{st_en_map_of_les} can be rewritten as
	
	\begin{equation}\label{snake_map_expanded}
		\begin{tikzcd}
		 H_n(\CT^{\cL' \Cup U}(E^n),\T^{\cL' \Cup U}(E^n)) \rar{\cong} \arrow[phantom, ""{coordinate, name=Z}]{d} & \widetilde{H}_{n-1}(\T^{\cL' \Cup U}(E^n))
			\arrow[
			rounded corners,
			to path={
				-- ([xshift=2ex]\tikztostart.east)
				|- (Z) [near end]\tikztonodes
				-| ([xshift=-2ex]\tikztotarget.west)
				-- (\tikztotarget)
			}
			]{dl} \\
			H_{n-1}(\T^{\cL' \Cup U}(E^n),\T^{\cL' \uplus U}(E^n))  & H_{n-1}(\CT^{\cL' \cap U}(U),\T^{\cL' \cap U}(U)) \lar[swap]{\cong},
		\end{tikzcd}
	\end{equation}
	where the first isomorphism takes a relative chain to its boundary, the second map is induced by a map of pairs, and the third isomorphism is by excision using that the square in \eqref{eqn:fact-pushout} is a pushout. If we apply this to the relative chain represented by an apartment-like map $(Q,\partial Q) \to \smash{(\CT^{\cL' \Cup U}(E^n),\T^{\cL' \Cup U}(E^n))}$, it restricts to $\partial Q$, then rewrites the resulting cycle as a relative cycle on the smaller pair $\smash{(\CT^{\cL' \cap U}(U),\T^{\cL' \cap U}(U))}$.

	Using this, we compute the dashed map of \eqref{st_en_map_of_les} on an element $[P] \in \smash{\Pt^{\cL' \cap U}(U)}$. Without loss of generality $P$ is a convex $(\cL' \cap U)$-polytope in $U$ that is a facet of a convex $(\cL' \Cup U)$-polytope $Q \subseteq U^+$ so that by \cref{pt_quotient}, $[Q] \in \smash{\Pt^{\cL' \Cup U}(E^n)}$ is a lift of $[P]$. We then map $[Q]$ down to the chain represented by the map 
	\[(Q,\partial Q) \longrightarrow (\CT^{\cL' \Cup U}(E^n),\T^{\cL' \Cup U}(E^n))\] 
	given by any apartment-like map on $\partial Q$. We now observe that apartment-like maps give a commuting diagram of pairs as follows:
	\[
	\begin{tikzcd}
		(\partial Q, \varnothing) \dar \rar{\apt} & (\T^{\cL' \Cup U}(E^n),\varnothing) \dar \\[-5pt]
		(\partial Q, \partial Q \setminus \textup{Int }P) \rar{\apt} & (\T^{\cL' \Cup U}(E^n),\T^{\cL' \uplus U}(E^n)) \\[-5pt]
		(P,\partial P) \uar{\cong_{H_*}} \rar{\apt} & (\CT^{\cL' \cap U}(U),\T^{\cL' \cap U}(U)), \uar[swap]{\cong_{H_*}}
	\end{tikzcd}
	\]
	where the maps marked $\cong_{H_*}$ are isomorphisms on homology. Furthermore, our convention for the orientation of $U$ implies that the distinguished generator of $H_n(Q,\partial Q)$ is sent to the distinguished generator of $H_{n-1}(P,\partial P)$. Therefore under \eqref{snake_map_expanded} the class $[Q]$ goes to the apartment class as desired
	\[ \smash{\apt[P] \in \widetilde{H}_{n-1}(\ST^{\cL' \cap U}(U))}.\]

	Now that we know the dashed map of the \eqref{st_en_map_of_les} agrees with the apartment map for the geometry $U$, it is an isomorphism by induction, so $\smash{\apt \colon \Pt^{\cL \Cup U}(E^n) \to \widetilde H_n(\ST^{\cL \Cup U}(E^n))}$ is an isomorphism using the 5-lemma.

	\smallskip
	
	For the third and final step, we prove that $\mathcal{P}_\cL$ has upper bounds for chains. We claim that given a totally ordered non-empty subset $\cdots \subset \cL_\lambda \subset \cL_{\lambda'} \subset \cdots$ of collections in $\mathcal{P}_\cL$, we have $\cup_{\lambda} \cL_\lambda \in \mathcal{P}_\cL$ as well. This union is evidently generated by hyperplanes and admissible, so it remains to check the conclusion of the theorem holds for it. We first observe that we have a homeomorphism
	\[\underset{\lambda \in \Lambda}\colim\,\T^{\cL_\lambda}(E^n) \overset{\cong}\longrightarrow \T^{\cup_{\lambda \in \Lambda} \cL_\lambda}(E^n) \]
	with domain a directed colimit of inclusions of cell complexes. Since any map from a compact space into $\T^{\cup_{\lambda \in \Lambda} \cL_\lambda}(E^n)$ factors over a $\T^{\cL_\lambda}(E^n)$, we deduce that the former is $(n-2)$-connected because each of the latter is. Since $\T^{\cup_{\lambda \in \Lambda} \cL_\lambda}(E^n)$ is also an $(n-1)$-dimensional complex, we conclude that it is equivalent to a wedge of $(n-1)$-spheres, proving the first half of the theorem. We next form the commutative diagram
	\[\begin{tikzcd} \underset{\lambda \in \Lambda}\colim\, \Pt^{\cL_\lambda}(E^n) \rar{\cong}[swap]{\apt} \dar[swap]{\cong} & \underset{\lambda \in \Lambda}\colim\, \widetilde{H}_n(\ST^{\cL_\lambda}(E^n)) \dar{\cong} \\[-5pt]
	\Pt^{\cup_{\lambda \in \Lambda} \cL_\lambda}(E^n) \rar[swap]{\apt} & \widetilde{H}_n(\ST^{\cup_{\lambda \in \Lambda} \cL_\lambda}(E^n)).
	\end{tikzcd}\]
	The right-hand vertical map is an isomorphism since homology commutes with directed colimits, and  the top horizontal map is a colimit of isomorphisms so an isomorphism. The left-hand vertical map is an isomorphism using the presentation of the polytope groups in \cref{def:polytope-group}, or alternatively using \cref{pt_subgroup}. Hence the bottom horizontal map is an isomorphism as well.
	
	\smallskip

	Having completed the three steps we can apply Zorn's lemma and conclude that $\mathcal{P}_\cL$ has a maximal element. This must be $\cL$, because if it were not then by the inductive step we could add another hyperplane. Therefore the conclusion of the theorem holds for $\cL$, as desired.
	\end{proof}
	
	\begin{remark}\label{n0-once-suspended}
		When $n = 0$, a ``once-suspended'' form of \cref{main-en} is true. We declare that $\cL = \{E^0\}$ is admissible, we see that $\T^\cL(E^0) = \varnothing$, and the apartment map for the unique polytope $* \subset E^0$ is the map $\varnothing \to \varnothing$. Therefore the suspension $\ST^\cL(E^0)$ is a single 0-sphere and the suspended apartment map induces an isomorphism
		\[ \begin{tikzcd} \Z \cong \Pt^\cL(E^0) \rar{\apt}[swap]{\cong} & \widetilde H_{0}(\ST^\cL(E^0)) \cong \Z. \end{tikzcd} \]
	\end{remark}
	
	\section{A local Solomon-Tits theorem and the hyperbolic case}
	
	In this section we prove a local version of the Solomon-Tits theorem that applies only to those hyperplanes that touch a fixed polytope $A$. We then use this to deduce a Solomon-Tits theorem when $\cL$ is generated by hyperplanes in the case of hyperbolic geometry.
	
	\subsection{The local Tits complex and local apartments}
	Let $X^n$ be either Euclidean or hyperbolic geometry, or an open hemisphere in spherical geometry, and let $A$ be a convex polytope in $X^n$. We will establish the Solomon-Tits theorem for $\cL$-polytopes contained inside $A$, by a similar induction on the number of hyperplanes in $\cL$ as \cref{main-en}. When intersecting with a hyperplane $U$ and reducing to the previously-proven version of the theorem inside $U$, we sometimes find that $A \cap U$ has dimension lower than $U$. To handle this, we also prove a version of the theorem where $A$ is allowed to have dimension smaller than $n$.
	
	\begin{definition}\label{def:polytope-group-q} 
		Let $\cL$ be a collection of subspaces of $X^n$ that is generated by hyperplanes, and let $V \in \cL$ be a subspace of any dimension. As in \cref{cup_and_cap_def}, let $(\cL \cap V) \subseteq \cL$ be the set of subspaces in $\cL$ that are contained in $V$, including $V$ itself. Finally, let $A$ be a non-empty convex $(\cL \cap V)$-polytope in $V$.
		\begin{enumerate}
			\item $\cL|A$ is the subposet of $\cL$ consisting of all subspaces that intersect $A$ in at least one point. The corresponding subcomplex of $T^\cL(X^n)$ is denoted $\T^{\cL|A}(X^n)$.
			\item $\Pt^{\cL|A}(X^n)$ is the subgroup of $\Pt^\cL(X^n)$ generated by those $\cL$-polytopes that are contained in $A$. (If the dimension of $A$ is less than $n$, this group is zero.)
		\end{enumerate}
	\end{definition} 
	
	\vspace{1em}
	\centerline{
	\def\svgwidth{1.9in}
\begingroup%
  \makeatletter%
  \providecommand\color[2][]{%
    \errmessage{(Inkscape) Color is used for the text in Inkscape, but the package 'color.sty' is not loaded}%
    \renewcommand\color[2][]{}%
  }%
  \providecommand\transparent[1]{%
    \errmessage{(Inkscape) Transparency is used (non-zero) for the text in Inkscape, but the package 'transparent.sty' is not loaded}%
    \renewcommand\transparent[1]{}%
  }%
  \providecommand\rotatebox[2]{#2}%
  \newcommand*\fsize{\dimexpr\f@size pt\relax}%
  \newcommand*\lineheight[1]{\fontsize{\fsize}{#1\fsize}\selectfont}%
  \ifx\svgwidth\undefined%
    \setlength{\unitlength}{97.33736693bp}%
    \ifx\svgscale\undefined%
      \relax%
    \else%
      \setlength{\unitlength}{\unitlength * \real{\svgscale}}%
    \fi%
  \else%
    \setlength{\unitlength}{\svgwidth}%
  \fi%
  \global\let\svgwidth\undefined%
  \global\let\svgscale\undefined%
  \makeatother%
  \begin{picture}(1,0.71508434)%
    \lineheight{1}%
    \setlength\tabcolsep{0pt}%
    \put(0,0){\includegraphics[width=\unitlength,page=1]{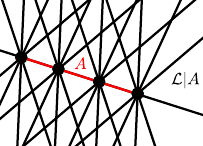}}%
  \end{picture}%
\endgroup%

	}
	\vspace{1em}
	
	Note that $\cL|A$ might no longer be closed under intersection: it is possible for $L_1 \cap A$ and $L_2 \cap A$ to be non-empty while $(L_1 \cap L_2) \cap A$ is empty. For example, in the figure above, any two black lines in $\cL|A$ that do not intersect in $A$, do not have an intersection that is in $\cL|A$.
	
	\begin{definition}\label{apt-restricted}
	If $P$ is a polytope contained in $A$, we say that a map $\partial P \to \T^{\cL|A}(X^n)$ is a \emph{restricted apartment-like map} if it sends each face $F$ into the contractible subcomplex $\CT^{\cL|A}(\spa F)$. These form a weakly contractible space as before, and coning off the restricted apartment-like maps gives maps of pairs $(P,\partial P) \to (\CT^{\cL|A}(X^n),\T^{\cL|A}(X^n))$ which can be used to define a homomorphism
	\begin{equation}\label{eq:apt_a}
		\apt \colon \Pt^{\cL|A}(X^n) \longrightarrow \widetilde{H}_n(\ST^{\cL|A}(X^n)).
	\end{equation}
	\end{definition}
	
	We first prove that when $\dim A < n$, this is an isomorphism between trivial groups.
	
	\begin{lemma}\label{q_complex_contractible}
		If $A$ is non-empty and $\dim A < n$, then $\T^{\cL|A}(X^n)$ is contractible and therefore \eqref{eq:apt_a} is an isomorphism.
	\end{lemma}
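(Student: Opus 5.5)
Since $A \subseteq V$ with $\dim A < n$, the span $W \coloneq \spa A$ is a proper geometric subspace of $X^n$, and in fact $W \in \cL$: $A$ is a convex $(\cL\cap V)$-polytope in $V$, so by \cref{faces_to_L} applied inside $V$ (with $A$ regarded as a polytope in its own span), $\spa A$ is $V$ itself or an intersection of hyperplanes of $\cL \cap V$. Either way it is an element of $\cL$. The strategy is to show that $\T^{\cL|A}(X^n)$ is contractible via a Quillen-type order-preserving retraction onto a cone, using $W$ as cone point.

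Consider the map of posets
\[ f \colon \cL|A \setminus \{X^n\} \longrightarrow \cL|A \setminus \{X^n\}, \qquad L \longmapsto L \cap W. \]
The key point to check is that $f$ is well-defined: $L \cap W$ must be a non-empty element of $\cL$ meeting $A$, and it must be proper. Since $L$ meets $A \subseteq W$, we get $L \cap W \supseteq L \cap A \neq \varnothing$. Also $L \cap W \in \cL$, because $\cL$ is closed under non-empty intersections (it is generated by hyperplanes). Moreover $L\cap W \subseteq W \subsetneq X^n$, and it meets $A$.

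Then $f(L) \subseteq L$ and $f(L) \subseteq W$, so we have natural inequalities $L \supseteq f(L) \subseteq W$ in the poset $\cL|A \setminus\{X^n\}$, where $W$ itself lies in this poset because it contains $A$. By the standard fact that comparable poset maps induce homotopic maps on realisations, the identity is homotopic to $f$, and $f$ is homotopic to the constant map at $W$. Hence $\T^{\cL|A}(X^n)$ is contractible.

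Consequently $\ST^{\cL|A}(X^n)$, the unreduced suspension of a contractible space (or the quotient of the contractible $\CT^{\cL|A}$ by a contractible subcomplex), is contractible, so $\widetilde H_n(\ST^{\cL|A}(X^n)) = 0$. Meanwhile $\Pt^{\cL|A}(X^n) = 0$, because no $n$-dimensional polytope fits inside $A$, as noted in \cref{def:polytope-group-q}. So \eqref{eq:apt_a} is a map between zero groups and hence an isomorphism.

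The only mildly delicate step is confirming $W = \spa A \in \cL$ and that $W \ne X^n$. If $\spa A \in \cL$ turns out to be awkward in some degenerate case (for instance when $A$ is a point, in the spherical/hemisphere setting), use $V$ itself in place of $W$ when $V \neq X^n$; if $V = X^n$ then $A$ is $n$-dimensional by definition of polytope, contradicting $\dim A<n$.
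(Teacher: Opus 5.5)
Your proof is correct and is essentially the paper's argument. The paper takes $V=\spa A\in\cL$ and realises the order-decreasing poset map $U\mapsto U\cap V$ as a deformation retraction of $\T^{\cL|A}(X^n)$ onto the subcomplex of subspaces of $V$ meeting $A$, which is contractible because $V$ is terminal; this is exactly what your chain $L\supseteq L\cap W\subseteq W$ does, and it likewise concludes that both groups in \eqref{eq:apt_a} vanish. (Your hedging about whether $\spa A\in\cL$ is unnecessary: a convex $(\cL\cap V)$-polytope in $V$ is by definition not contained in any proper subspace of $V$, so $\spa A=V$, which lies in $\cL$ by hypothesis.)
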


	\begin{proof}
		Let $V \in \cL$ be the span of $A$, and let $\CT^{(\cL \cap V)|A}(V)$ be the subcomplex of  $\T^{\cL|A}(X^n)$ given by those subspaces in $\cL \cap V$ that intersect $A$, including $V$ itself. This is contractible since $V$ is terminal. We define a deformation retraction onto this subcomplex
		\begin{align*}
			\T^{\cL|A}(X^n) &\longrightarrow \CT^{(\cL \cap V)|A}(V)
		\end{align*}
		by realising the map of posets $U \mapsto U \cap V$: this is an order-decreasing map, $U \supseteq f(U)$, that is the identity on $\CT^{(\cL \cap V)|A}(V)$ and preserves the property of having non-trivial intersection with $A$, as $(U \cap V) \cap A = U \cap A$. We conclude that $\smash{\T^{\cL|A}(X^n)}$ is contractible.
	\end{proof}
	
	We next move to the case where $\dim A = n$ and give the analogue of \cref{new_generators}. We note that in the subsequent, local version of this lemma $U$ can be any hyperplane in $X^n$---that is, the condition $U \notin \cL$ in \cref{new_generators} can be dropped.
	
	\begin{lemma}\label{new_generators_hn}
		If $\cL$ is generated by hyperplanes, $A$ is an $n$-dimensional convex $\cL$-polytope, and $U \subseteq X^n$ is any hyperplane, then each $(n-1)$-dimensional convex $(\cL \cap U)$-polytope $P \subseteq A \cap U$ is a facet of an $n$-dimensional convex $(\cL \Cup U)$-polytope $Q \subseteq A$.
	\end{lemma}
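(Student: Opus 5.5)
The idea is to mimic the proof of \cref{new_generators}, with the ambient convex polytope $A$ replacing the unbounded polyhedron: $A$ is bounded, so boundedness is automatic. No admissibility hypothesis or finite basis theorem is then needed.

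First I present $P$ as a convex $(\cL \cap U)$-polytope in $U$:
\[ P = L_1^+ \cap \cdots \cap L_m^+ \cap U \]
with $L_i \in \cL^{n-1}$. Since $P \subseteq A \cap U$, I may add to this list the finitely many half-spaces $M_1^+,\ldots,M_k^+$ (from $\cL^{n-1}$) defining $A$ without changing the intersection with $U$. I then set
\[ R = L_1^+ \cap \cdots \cap L_m^+ \cap M_1^+ \cap \cdots \cap M_k^+ = \Big(\bigcap_i L_i^+\Big) \cap A, \]
a convex region contained in $A$ with $R \cap U = P$. Fixing a side $U^+$ of $U$, the candidate is
\[ Q = R \cap U^+ , \]
an intersection of half-spaces from $\cL^{n-1} \cup \{U\}$. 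It is bounded because it lies in $A$. Hence it is a convex $(\cL \Cup U)$-polytope once I know it is $n$-dimensional. In the hyperbolic and hemisphere settings, "bounded" and "convex" should be read via the embedding into $\R^{n+1}$ (the Klein/gnomonic model), where $A$ is compact and everything reduces to Euclidean convex geometry; I will say this once up front.

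The main point is $n$-dimensionality, and that $P$ is a facet. Take $x$ in the relative interior of $P$ in $U$. I must choose the side $U^\pm$ so that $Q$ contains an open half-ball at $x$. The obstacle is that $x$ may lie on some $L_i$ or $M_j$ that contains all of $P$. For example, if $U$ contains a facet of $A$, then $A \cap U$ is that facet, and $R$ lies on one side of $U$. I will argue that any defining hyperplane $H$ (an $L_i$ or $M_j$) with $x \in H$ satisfies $H \supseteq P$, so $H \supseteq U$ since $P$ spans $U$; hence $H = U$. Otherwise $x$ would be on the relative boundary of $P$. So near $x$ the only constraint passing through $x$ is possibly $U$ itself (as an $L_i$ or $M_j$ with a given side). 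Discarding that constraint and deleting every hyperplane equal to $U$ from the list, all remaining half-spaces contain $x$ in their interior. Then a small ball $B$ around $x$ lies in the intersection of the remaining half-spaces.

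Now choose $U^+$ to be the side compatible with the discarded constraint; if there was none, choose either side. With this choice, $B \cap U^+ \subseteq Q$, so $Q$ is $n$-dimensional. Finally, $Q \cap U = R \cap U = P$, with $Q$ on one side of $U$. Therefore $P$ is a face of $Q$ of dimension $n-1$, that is, a facet, and $Q \subseteq R \subseteq A$.
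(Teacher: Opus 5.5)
Your proof is correct and takes the same approach as the paper. The paper subdivides $A$ by $U$ and by the hyperplanes defining $A$ and $P$, then observes that $P$ is a facet of one or two of the resulting cells. Your $Q=\bigl(\bigcap_i L_i^+\bigr)\cap A\cap U^+$ is exactly one of those cells, and your relative-interior argument supplies the $n$-dimensionality and facet check that the paper leaves implicit.

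One small point to tidy up: first drop from the presentation of $P$ any $L_i$ equal to $U$, since it is redundant there. Then the only discarded constraints along $U$ come from $A$, and their sides must agree because $A$ is $n$-dimensional, so your rule for choosing $U^+$ is well defined.
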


	\begin{proof}
		This is easier than \cref{new_generators} because we have the polytope $A$ to work with. Take the hyperplanes defining $A$ and $P$, including the hyperplane $U$. These subdivide $A$ into convex polytopes $Q_i$, and $P$ is either the facet of a unique $Q_i$ (when $U$ defines a facet of $A$) or of two polytopes $Q_i$ and $Q_j$ (when $U$ does not define a facet of $A$). Since each $Q_i$ is a convex $n$-dimensional $(\cL \Cup U)$-polytope, we are done.
	\end{proof}

	\vspace{1em}
	\centerline{
	\def\svgwidth{3.8in}
\begingroup%
  \makeatletter%
  \providecommand\color[2][]{%
    \errmessage{(Inkscape) Color is used for the text in Inkscape, but the package 'color.sty' is not loaded}%
    \renewcommand\color[2][]{}%
  }%
  \providecommand\transparent[1]{%
    \errmessage{(Inkscape) Transparency is used (non-zero) for the text in Inkscape, but the package 'transparent.sty' is not loaded}%
    \renewcommand\transparent[1]{}%
  }%
  \providecommand\rotatebox[2]{#2}%
  \newcommand*\fsize{\dimexpr\f@size pt\relax}%
  \newcommand*\lineheight[1]{\fontsize{\fsize}{#1\fsize}\selectfont}%
  \ifx\svgwidth\undefined%
    \setlength{\unitlength}{250.3603052bp}%
    \ifx\svgscale\undefined%
      \relax%
    \else%
      \setlength{\unitlength}{\unitlength * \real{\svgscale}}%
    \fi%
  \else%
    \setlength{\unitlength}{\svgwidth}%
  \fi%
  \global\let\svgwidth\undefined%
  \global\let\svgscale\undefined%
  \makeatother%
  \begin{picture}(1,0.41150674)%
    \lineheight{1}%
    \setlength\tabcolsep{0pt}%
    \put(0,0){\includegraphics[width=\unitlength,page=1]{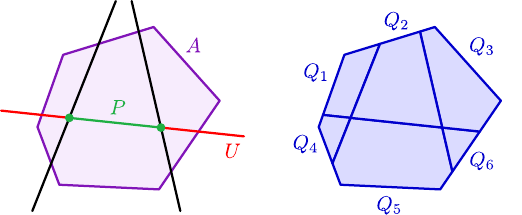}}%
  \end{picture}%
\endgroup%

	}
	\vspace{1em}
	
	Next we consider how $\Pt^{\cL|A}(X^n)$ changes when we add a new hyperplane $U$, analogously to \cref{pt_quotient}.
	
	\begin{lemma}\label{pt_quotient_hyp}
		If $\cL$ is generated by hyperplanes, $A$ is an $n$-dimensional convex $\cL$-polytope, and $U \notin \cL$ is a hyperplane, then there is an exact sequence of abelian groups
		\[ 0 \longrightarrow \Pt^{\cL|A}(X^n) \longrightarrow \Pt^{(\cL \Cup U)|A}(X^n) \longrightarrow \Pt^{(\cL \cap U)|(A \cap U)}(U) \longrightarrow 0. \]
	\end{lemma}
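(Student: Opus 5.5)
I will follow the proof of \cref{pt_quotient}, replacing \cref{new_generators} by \cref{new_generators_hn}; this makes the argument shorter, because all polytopes now sit inside the bounded polytope $A$.

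\emph{Injectivity.} The left map $\Pt^{\cL|A}(X^n) \to \Pt^{(\cL \Cup U)|A}(X^n)$ is the restriction of $\Pt^{\cL}(X^n) \to \Pt^{\cL\Cup U}(X^n)$ to subgroups. Both of these groups embed compatibly into $C(X^n,\Z)$ by \cref{pt_subgroup}, whose proof goes through verbatim in the hyperbolic and hemisphere settings. Hence the left map is injective.

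\emph{The right map.} Fix a normal direction to $U$, giving half-spaces $U^\pm$. A convex $(\cL\Cup U)$-polytope $Q \subseteq A$ is sent to $\pm$ its facet along $U$: the sign is $+$ if $Q\subseteq U^+$, $-$ if $Q\subseteq U^-$, and the image is $0$ if $Q$ has no facet along $U$. That facet lies in $A\cap U$ and is an $(\cL\cap U)$-polytope, so it is a generator of $\Pt^{(\cL \cap U)|(A \cap U)}(U)$. This is compatible with subdivision by the same three-case check as in \cref{pt_quotient}: a cutting hyperplane either misses $U$, cuts $U$ transversally, or equals $U$, in which case the two new pieces cancel. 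Convex $\cL$-polytopes have no facet along $U$, since $U \notin \cL$ and facets span hyperplanes of $\cL$ by \cref{faces_to_L}. So the composite of the two maps is zero, and we get a map from the quotient $\Pt^{(\cL \Cup U)|A}(X^n)/\Pt^{\cL|A}(X^n)$.

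\emph{The inverse.} If $A\cap U$ is not $(n-1)$-dimensional, the target is $0$. We must then check that the quotient vanishes, i.e.\ that no $Q\subseteq A$ has a facet along $U$. This holds: a facet along $U$ would be an $(n-1)$-dimensional subset of $A\cap U$.

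Otherwise, take a convex $(\cL\cap U)$-polytope $P\subseteq A\cap U$. By \cref{new_generators_hn}, $P$ is a facet of some convex $(\cL\Cup U)$-polytope $Q\subseteq A$, and we send $[P]\mapsto \pm[Q]$. For well-definedness, first pass to common refinements as in \cref{pt_quotient}; this reduces to showing that two choices $Q,Q'\subseteq A$ with common facet $P$ agree in the quotient.
\begin{itemize}
\item If $Q,Q'$ lie on the same side of $U$, then $Q\cap Q'$ again has facet $P$. The differences $Q\setminus Q'$ and $Q'\setminus Q$ are finite unions of convex $(\cL\Cup U)$-polytopes in $A$ with no facet along $U$. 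Cutting them only along the non-$U$ hyperplanes shows they are $\cL$-polytopes in $A$.
\item If $Q,Q'$ lie on opposite sides, cut $A$ along all hyperplanes defining $Q$ and $Q'$ other than $U$. The resulting cells are bounded automatically, since they lie in $A$; this is exactly where $A$ replaces the unboundedness argument of \cref{pt_quotient}. Each cell lies either inside or outside $Q\cup Q'$: a cell meeting both would give a path from the interior to the exterior of $Q\cup Q'$ crossing none of these hyperplanes, which is impossible because $U$ is crossed only through the interior of $P$. Hence $[Q]+[Q']\in\Pt^{\cL|A}(X^n)$.
\end{itemize}
The resulting map is inverse to the facet map on generators. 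This gives exactness in the middle and surjectivity at the right.

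The main obstacle is the bookkeeping that all pieces remain inside $A$ and are genuinely $\cL$-polytopes, i.e.\ cut out without using $U$. I would handle this uniformly by always subdividing $A$ by a finite arrangement consisting of $\cL$-hyperplanes together with $U$. The whole argument then reduces to the combinatorics of one finite arrangement, where convexity of $A$ guarantees that every cell is bounded.
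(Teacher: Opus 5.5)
Your proposal is correct and is essentially the paper's proof. The paper just says to rerun the argument of \cref{pt_quotient} using only polytopes contained in $A$, with \cref{new_generators_hn} supplying the lifts for the inverse; you have filled in those details faithfully, including the point that cutting inside the bounded polytope $A$ makes the opposite-sides case easier. One step is worth making explicit: in the degenerate case, passing from ``no $Q \subseteq A$ has a facet along $U$'' to ``the quotient vanishes'' uses \cref{faces_to_L} once more, since a convex $(\cL \Cup U)$-polytope with no facet on $U$ is already a convex $\cL$-polytope. The paper phrases the same point as $U$ missing the interior of $A$.
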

	
	It is possible for $A \cap U$ to have dimension less than $(n-1)$, or to be empty. In either of these cases $\Pt^{(\cL \cap U)|(A \cap U)}(U) = 0$ and we shall show the left-hand map is an isomorphism.
	
	\begin{proof}
		The proof is analogous to that of \cref{pt_quotient}. In particular, we label the half-spaces cut out by $U$ as $U^+,U^- \subseteq X^n$, and define the map
		\begin{equation}\label{pt_quotient_identification-hn}
			\Pt^{(\cL \Cup U)|A}(X^n) \big / \Pt^{\cL|A}(X^n) \longrightarrow \Pt^{(\cL \cap U)|(A \cap U)}(U)
		\end{equation}
		by taking each convex $(\cL \Cup U)$-polytope $Q \subseteq A$ to its facet (if any) that lies along $U$, with positive sign if $Q \subseteq U^+$ and negative sign if $Q \subseteq U^-$. The proof that this is an isomorphism is the same as in \cref{pt_quotient}, using only polytopes that are contained in $A$ and using \cref{new_generators_hn} when defining the inverse. Note that when $A \cap U$ is not $n$-dimensional, $U$ does not intersect the interior of $A$, and so there is no difference between convex $\cL$-polytopes contained in $A$ and convex $(\cL \Cup U)$-polytopes contained in $A$.
	\end{proof}
	
	\begin{theorem}[$A$-local Solomon-Tits]\label{main-step-hn}
		For all $n \geq 1$, if $\cL$ is generated by hyperplanes and $A$ is an $n$-dimensional convex $\cL$-polytope, then $\T^{\cL|A}(X^n)$ is equivalent to a wedge of $(n-1)$-spheres, and the restricted apartment map from \cref{apt-restricted} induces an isomorphism
		\[\apt \colon \Pt^{\cL|A}(X^n) \overset{\cong}\longrightarrow \widetilde{H}_n(\ST^{\cL|A}(X^n)). \]
	\end{theorem}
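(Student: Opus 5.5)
We prove the statement by induction on $n$, mirroring the proof of \cref{main-en}, combined with Zorn's lemma over subcollections of $\cL$. Let $\cL_A \subseteq \cL$ be the collection generated by the finitely many hyperplanes spanning the facets of $A$. Let $\mathcal{P}$ be the poset of collections $\cL'$ with $\cL_A \subseteq \cL' \subseteq \cL$, generated by hyperplanes, for which the conclusion holds.

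\emph{Base case $n=1$.} Here $A$ is an interval $[a,b]$ and $\cL|A$ consists of $E^1$ (or $X^1$) together with the points of $\cL^0$ lying in $[a,b]$; this set contains $a$ and $b$. So $\T^{\cL|A}(X^1)$ is a nonempty discrete set, and $\Pt^{\cL|A}(X^1)$ is freely generated by the consecutive intervals. As in \cref{main-en}, the apartment map sends $[p,p']$ to $[p']-[p]$, and this is an isomorphism onto $\ker(\Z\langle \cL^0 \cap A\rangle \to \Z)$.

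\emph{$\mathcal{P}$ is non-empty.} We check that $\cL_A \in \mathcal{P}$. Since $A$ is bounded and full-dimensional, its facet hyperplanes have $0$-dimensional intersections; for instance, every vertex of $A$ is such an intersection. In $\cL_A|A$, every element meets $A$, and the only bounded $\cL_A$-polytope in $A$ is $A$ itself up to subdivision, so $\Pt^{\cL_A|A}(X^n) \cong \Z$. The claim is then that $\T^{\cL_A|A}(X^n) \simeq \partial A \simeq S^{n-1}$, with $\apt(A)$ as a generator. I would prove this by identifying $\cL_A|A$ with a poset that is homotopy equivalent to the face poset of $\partial A$. Concretely, the map $U \mapsto U \cap A$ goes to nonempty intersections of $A$ with flats of its facet arrangement. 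Each such set is either a face of $A$, or meets the interior of $A$, which happens only for $X^n$ itself. Hence $\cL_A|A \setminus\{X^n\}$ is the poset of spans of proper faces, which is isomorphic to the face poset of $\partial A$ by \cref{faces_to_L}. The apartment-like map $\partial A \to \T$ is then the identification $|{\rm sd}\,\partial A| \cong \T$. (Alternatively, one can skip this step and start the induction from the empty hyperplane set by a separate argument.)

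\emph{Inductive step.} Given $\cL' \in \mathcal{P}$ and a hyperplane $U \in \cL \setminus \cL'$, form the analogue of \eqref{eqn:fact-pushout} with everything restricted to $A$:
\[ \T^{\cL'|A} \xrightarrow{\simeq} \T^{(\cL'\uplus U)|A} \to \T^{(\cL'\Cup U)|A} \leftarrow \CT^{(\cL'\cap U)|(A\cap U)}(U). \]
The square is a pushout because the downward link of $U$ is $\T^{(\cL'\cap U)|(A\cap U)}(U)$. The first map is an equivalence via $V \mapsto \bigcap_{V\subseteq V'\in\cL'}V'$, which is increasing and so preserves meeting $A$. There are two cases.
\begin{itemize}
\item If $\dim(A\cap U)=n-1$, then $A\cap U$ is a convex $(\cL'\cap U)$-polytope in $U$ (which is a Euclidean, hyperbolic or hemispherical space). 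Induction gives a wedge of $(n-2)$-spheres, and \cref{pt_quotient_hyp} together with the snake-map computation from \cref{main-en} and the five lemma gives the isomorphism. The dashed map is again the apartment map for $U$, by the identical diagram of pairs.
\item If $A\cap U$ is nonempty of lower dimension, the link is contractible by \cref{q_complex_contractible}, so nothing changes. Likewise \cref{pt_quotient_hyp} has zero third term. If $A\cap U=\varnothing$, then $U\notin\cL'|A$ and the complex is unchanged.
\end{itemize}

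\emph{Chains.} Unions of chains are handled by the same directed-colimit argument as in \cref{main-en}, and Zorn's lemma gives $\cL \in \mathcal{P}$.

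The main obstacle is the base case of $\mathcal{P}$: identifying $\T^{\cL_A|A}$ with $\partial A$. The subtle points are checking that every flat of the facet arrangement meeting $A$ only on the boundary is the span of a face (possibly a lower-dimensional flat touching a face), and that the retraction $U\mapsto \spa(U\cap A)$ is a well-defined poset map with $\spa(U\cap A)\subseteq U$. Since $U\cap A$ is a face whose span lies in $U$, this retraction gives the required homotopy equivalence.
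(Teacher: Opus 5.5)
Your proposal is correct and follows essentially the same route as the paper. Both argue by induction on $n$ with Zorn's lemma, starting from the collection generated by the facet hyperplanes of $A$. That starting collection is deformation-retracted via $U \mapsto \spa(U\cap A)$ onto the spans of faces, i.e.\ the derived subdivision of $\partial A$. Adding a hyperplane is then handled by the same three cases $A\cap U=\varnothing$, $\dim(A\cap U)<n-1$ and $\dim(A\cap U)=n-1$.

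Two small slips. First, $\cL_A|A\setminus\{X^n\}$ is not literally the poset of spans of faces, since several flats can meet $A$ in the same vertex (as in the paper's octahedron remark); this is exactly why the retraction you invoke at the end is needed. Second, in the base case the ``consecutive intervals'' basis only exists when $\cL^0\cap A$ is finite; the general case follows by a directed colimit.
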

	
	\begin{proof}
	As for \cref{main-en}, the proof is by induction on $n$. In the initial case $n = 1$, $A$ is a line segment with endpoints in $\cL^0$, and $\T^{\cL|A}(X^1)$ is set of points of $\cL^0$ contained between these endpoints. Again its suspension is a wedge of circles, and the map
		\[\apt \colon \Pt^{\cL|A}(X^1) \longrightarrow \widetilde{H}_1(\ST^{\cL|A}(X^1)) \cong \ker(\Z\langle \cL^0 \cap A \rangle \overset{\epsilon}\longrightarrow \Z)\]
	sends each interval to the difference between its endpoints $[p'] - [p]$, which is easily checked to be an isomorphism.
	
	We henceforth assume $n \geq 2$ and that the theorem has been proven for the previous value of $n$. As before, we prove the result by applying Zorn's Lemma to the set $\mathcal{P}_\cL$ of all collections $\cL' \subseteq \cL$ that are generated by hyperplanes and for which the theorem is true, now for a fixed polytope $A$. Note that such a collection $\cL'$ must at the very least contain the hyperplanes that define $A$ itself. There are three main steps:
	\begin{itemize}
		\item proving that $\mathcal{P}_\cL$ is non-empty,
		\item adding one hyperplane $U$ to $\cL' \in \mathcal{P}_\cL$, and proving that $\cL' \Cup U \in \mathcal{P}_\cL$, and
		\item taking a colimit of collections $\cup_{\lambda} \cL_\lambda$ with $\cL_\lambda \in \mathcal{P}_\cL$ and proving that $\cup_{\lambda} \cL_\lambda \in \mathcal{P}_\cL$.
	\end{itemize}
	
	For the first step, we prove that $\mathcal{P}_\cL$ is non-empty. Let $\cL_\circ \subseteq \cL$ be the minimal collection of hyperplanes that define $A$, and their intersections. By \cref{faces_to_L}, this has one hyperplane for each facet of $A$. Each of the remaining subspaces $U \in \cL_\circ|A$ intersects $A$ in a face, because \cref{faces_to_L} implies the intersection property
	\[ A \cap (\spa F_1 \cap \spa F_2) = (A \cap \spa F_1) \cap (A \cap \spa F_2) = F_1 \cap F_2. \]
	This gives a surjection from $\cL_\circ|A$ to the nonempty faces of $A$. It is not in general a bijection because the same face could be obtained as the intersection of several subspaces in $\cL_\circ$ with $A$. (For instance, if $A$ is an octahedron and $F$ is a vertex, then there are three subspaces in $\cL_\circ$ whose intersection with $A$ is the vertex.)
	
	Still, the rule $U \mapsto U \cap \spa(U \cap A)$ gives a deformation retraction of the complex $\T^{\cL_\circ|A}(X^n)$ onto the subcomplex of those spaces that are spans of the faces of $A$. This is identified with the derived subdivision of $\partial A$, in other words the simplicial complex with a simplex for each flag of proper nonempty faces $F_0 \subseteq \cdots \subseteq F_p$. Furthermore the inclusion $\partial A \to \T^{\cL_\circ|A}(X^n)$ is a restricted apartment-like map. So the entire complex is homotopy equivalent to a single $(n-1)$-sphere, $\Pt^{\cL_\circ|A}(X^n) = \Z$, and the restricted apartment classes induce an isomorphism on homology.
	
	\smallskip
	
	For the second key inductive step, we form the diagram
	\begin{equation}\label{eqn:fact-pushout-hn}
	\begin{tikzcd}
		\T^{\cL'|A}(X^n) \rar{\simeq} &  \T^{(\cL' \uplus U)|A}(X^n) \rar & \T^{(\cL' \Cup U)|A}(X^n) \\[-5 pt]
		& \T^{(\cL' \cap U)|(A \cap U)}(U) \rar \uar & \CT^{(\cL' \cap U)|(A \cap U)}(U). \uar
	\end{tikzcd}
	\end{equation}
	The map marked $\simeq$ is an equivalence using the same deformation retract given below the diagram \eqref{eqn:fact-pushout}. This deformation retract preserves the property of having non-trivial intersection with $A$, because it sends each space $V \in \cL' \uplus U$ to a larger space in $\cL'$. The remaining square of \eqref{eqn:fact-pushout-hn} is a pushout because, as before, a space in $\cL' \Cup U \setminus \{X^n\}$ that intersects $A$ either has to be a subspace of $U$, or an element of $\cL' \uplus U \setminus \{X^n\}$, or both, which happens precisely when it is an element of $(\cL' \cap U) \setminus \{U\}$. In all of these cases, the space must continue to intersect $A$.
	
	If $U$ does not intersect $A$, then the complexes on the bottom row of \eqref{eqn:fact-pushout-hn} are empty, the ones along the top row are identical, and the polytope group does not change by \cref{pt_quotient_hyp}, so $\cL' \Cup U \in \mathcal{P}_\cL$ as desired. 
	
	If $U$ intersects $A$ but $\dim(A \cap U) < \dim U$, then the complexes on the bottom row of \eqref{eqn:fact-pushout-hn} are contractible by \cref{q_complex_contractible}, so the inclusion $\T^{\cL'|A}(X^n) \to \T^{(\cL' \Cup U)|A}(X^n)$ is an equivalence, and so $\T^{(\cL' \Cup U)|A}(X^n)$ is a wedge of $(n-1)$-spheres. The map of polytope groups $\Pt^{\cL'|A}(X^n) \to \Pt^{(\cL' \Cup U)|A}(X^n)$ is also an isomorphism by \cref{pt_quotient_hyp}, so the apartment classes still give an isomorphism to $\widetilde{H}_n(\ST^{(\cL' \Cup U)|A}(X^n))$, and so again $\cL' \Cup U \in \mathcal{P}_\cL$.
	
	If $U$ intersects $A$ and $\dim(A \cap U) = \dim U = (n-1)$, then $U$ is not one of the hyperplanes defining $A$ (those are already contained in $\cL'$), so $U$ intersects the interior of $A$. This is the interesting case, because new homology classes are created. In this case the bottom-right term $\smash{\CT^{(\cL' \cap U)|(A \cap U)}(U)}$ is contractible, and the bottom-left term $\smash{\T^{(\cL' \cap U)|(A \cap U)}(U)}$ is a wedge of $(n-2)$-spheres by inductive hypothesis. Therefore $\smash{\T^{(\cL' \Cup U)|A}(X^n)}$ is a wedge of $(n-1)$-spheres. We form the map of exact sequences
	\begin{equation}\label{st_hn_map_of_les}
	\begin{tikzcd}[column sep = .9em]
		0 \rar & \Pt^{\cL'|A}(X^n) \rar \dar{\apt}[swap]{\cong} & \Pt^{(\cL' \Cup U)|A}(X^n) \dar{\apt} \rar & \Pt^{(\cL' \cap U)|(A \cap U)}(U) \dar[dashed] \rar & 0 \\[-5pt]
		0 \rar & \widetilde{H}_n(\ST^{\cL'|A}(X^n)) \rar &  \widetilde{H}_n(\ST^{(\cL' \Cup U)|A}(X^n)) \rar & \widetilde{H}_{n-1}(\ST^{(\cL' \cap U)|(A \cap U)}(U)) \rar & 0,\end{tikzcd}
	\end{equation}
	 and the same argument as before shows that the dashed map is given by the restricted apartment classes in $U$. (The lift $[Q]$ of $[P]$ has to be chosen so that $Q \subseteq A$, which is possible by \cref{new_generators_hn}.) Therefore, the dashed map is an isomorphism by inductive hypothesis. Hence, the middle vertical map is an isomorphism, and we conclude that $\cL' \Cup U \in \mathcal{P}_\cL$ in this case as well. This finishes all the cases and therefore finishes the second step of the proof.
	
	\smallskip
	
	The proof of the third step (the one that takes the colimit $\cup_\lambda \cL_\lambda$), as well as the end of the proof, is identical to the proof in \cref{main-en}.
	\end{proof}
	
	\subsection{Proof in the hyperbolic case}
	We will now prove the Solomon-Tits theorem in the hyperbolic case when $\cL$ is generated by hyperplanes, using the above local form of the Solomon-Tits theorem. In order for this approach to work, we need a stronger notion of admissibility:

	\begin{definition}\label{admissible-hn}
	Let $n \geq 1$. We say that a collection $\cL$ of subspaces of $H^n$ is \emph{admissible} if
		\begin{enumerate}
			\item $\cL$ is generated by hyperplanes, and 
			\item there is an increasing sequence of convex $\cL$-polytopes $A_1 \subseteq A_2 \subseteq A_3 \subseteq \cdots$ such that the union of the interiors is all of $H^n$.
		\end{enumerate}
	\end{definition}
	
	\begin{remark}
		Condition (ii) is stronger than the one we imposed in the Euclidean case: that $\cL^0$ is non-empty (\cref{admissible-en}). The Solomon-Tits theorem for $H^n$ can be false under that weaker assumption: take seven hyperplanes in $E^3$, whose intersections form two cubes sharing a common face, and arrange these hyperplanes so the two cubes lie mostly in the unit ball $B^3 \subseteq E^3$, except that exactly two vertices of one of the cubes stick outside the ball, as shown below. Then in the Klein model for $H^3$, this gives seven hyperplanes in $H^3$ with non-trivial 0-dimensional intersections. However, $\T^\cL(H^3)$ has the homotopy type of $S^1 \vee S^2$.
	\end{remark}

	\vspace{1em}
	\centerline{
	\def\svgwidth{1.8in}
\begingroup%
  \makeatletter%
  \providecommand\color[2][]{%
    \errmessage{(Inkscape) Color is used for the text in Inkscape, but the package 'color.sty' is not loaded}%
    \renewcommand\color[2][]{}%
  }%
  \providecommand\transparent[1]{%
    \errmessage{(Inkscape) Transparency is used (non-zero) for the text in Inkscape, but the package 'transparent.sty' is not loaded}%
    \renewcommand\transparent[1]{}%
  }%
  \providecommand\rotatebox[2]{#2}%
  \newcommand*\fsize{\dimexpr\f@size pt\relax}%
  \newcommand*\lineheight[1]{\fontsize{\fsize}{#1\fsize}\selectfont}%
  \ifx\svgwidth\undefined%
    \setlength{\unitlength}{124.29032789bp}%
    \ifx\svgscale\undefined%
      \relax%
    \else%
      \setlength{\unitlength}{\unitlength * \real{\svgscale}}%
    \fi%
  \else%
    \setlength{\unitlength}{\svgwidth}%
  \fi%
  \global\let\svgwidth\undefined%
  \global\let\svgscale\undefined%
  \makeatother%
  \begin{picture}(1,0.94282322)%
    \lineheight{1}%
    \setlength\tabcolsep{0pt}%
    \put(0,0){\includegraphics[width=\unitlength,page=1]{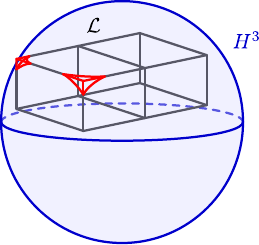}}%
  \end{picture}%
\endgroup%

	}
	\vspace{1em}

	When $n \geq 2$, admissibility in the sense of \cref{admissible-hn} inherited by $\cL \cap U$ for any hyperplane $U \in \cL^{n-1}$. The following lemma makes the condition easier to verify in examples.
	
	\begin{lemma}
		If $\cL$ is generated by hyperplanes, then it is admissible if and only if for every $x \in H^n$, every ray originating at $x$, and every $r > 0$, there exists a hyperplane in $\cL$ that intersects the ray and whose closest distance to $x$ in the hyperbolic metric is larger than $r$.
	\end{lemma}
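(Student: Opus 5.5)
We prove the two implications separately. Throughout we use the Klein (projective) model or simply the hyperboloid model with hyperbolic distance $d$, and the basic fact that for a hyperplane $L$ and a point $x\notin L$, the hyperbolic half-space of $L$ containing $x$ contains the ball $B(x,d(x,L))$.

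\emph{Admissible $\Rightarrow$ ray condition.} Fix $x$, a ray $\gamma$ from $x$ and $r>0$. Since the interiors of the $A_i$ exhaust $H^n$ and closed balls are compact, some $A_i$ contains the closed ball $\overline{B}(x,R)$ with $R$ chosen large (we will only need $R>r$). The ray $\gamma$ leaves the bounded set $A_i$, so it crosses $\partial A_i$ at some point $y$ lying on a facet $F$ of $A_i$, and by \cref{faces_to_L} the hyperplane $L=\spa F$ lies in $\cL^{n-1}$. Then $L$ meets $\gamma$ at $y$, and since $A_i$ lies in one closed half-space of $L$ containing $\overline B(x,R)$ in its interior side, we get $d(x,L)\geq R>r$. (If $\overline{B}(x,R)\subseteq \mathrm{Int}\,A_i$ then the ball is disjoint from $L$, giving $d(x,L)\ge R$.)

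\emph{Ray condition $\Rightarrow$ admissible.} Fix a basepoint $x$. For each $k$ we want a convex $\cL$-polytope $A_k \supseteq \overline B(x,k)$; then replacing $A_k$ by $A_1\cap\cdots\cap A_k$ (still a convex $\cL$-polytope, still containing $\overline B(x,k)$, hence $n$-dimensional and bounded) gives an increasing sequence whose interiors exhaust $H^n$. To construct $A_k$: for each unit direction $v$ at $x$, the hypothesis gives a hyperplane $L_v\in\cL^{n-1}$ meeting the ray in direction $v$ with $d(x,L_v)>k+1$. The key step is a compactness argument on the sphere of directions $S^{n-1}$: the set $W_v$ of directions $u$ whose rays hit $L_v$ is open (transversality: the ray in direction $v$ crosses $L_v$ transversally, since $x\notin L_v$ and rays from $x$ can meet a hyperplane not containing $x$ at most once, and crossing is stable under perturbation). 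Choose finitely many $v_1,\dots,v_m$ with $W_{v_j}$ covering $S^{n-1}$, and let $A_k$ be the intersection of the closed half-spaces of the $L_{v_j}$ containing $x$.

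It remains to check $A_k$ is a convex $\cL$-polytope containing $\overline B(x,k)$. Containment of the ball follows from $d(x,L_{v_j})>k+1$. Boundedness: every ray from $x$ lies in some $W_{v_j}$, hence exits $A_k$; using that $A_k$ is closed and convex with $x$ in its interior, the exit distance is a continuous (indeed upper semicontinuous suffices) finite function on the compact $S^{n-1}$, hence bounded, so $A_k$ is bounded. It is $n$-dimensional since it contains a ball. The main obstacle is this boundedness/openness step: I would handle it in the Klein model, where hyperbolic hyperplanes are affine hyperplanes intersected with the open unit ball and rays are straight segments, so $A_k$ is an intersection of finitely many Euclidean half-spaces with the open ball; the covering shows each ray from $x$ hits a wall before the boundary sphere, and a compactness argument then shows $A_k$ is contained in a smaller closed Euclidean ball, hence is hyperbolically bounded.
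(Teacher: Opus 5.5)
Your forward direction is essentially the paper's argument. In the converse, your covering of the sphere of directions by the open sets $W_v$, the finite subcover, and the upper-semicontinuity argument for boundedness are also sound; the boundedness argument actually fills in a step the paper only asserts.

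\textbf{Gap.} The step that makes the sequence nested is false. The set $A_1\cap\cdots\cap A_k$ is contained in $A_1$, so these sets form a \emph{decreasing} sequence. Since $A_1$ is bounded, they cannot contain $\overline{B}(x,k)$ once $k$ is large. Each $A_j$ contains only $\overline{B}(x,j)$, and for $j\le k$ we have $\overline{B}(x,j)\subseteq\overline{B}(x,k)$, not the reverse. Intersecting can only shrink sets, so no intersection trick produces the increasing sequence $A_1\subseteq A_2\subseteq\cdots$ required by \cref{admissible-hn}. As written, you only obtain polytopes whose interiors exhaust $H^n$ but which are not nested.

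\textbf{Fix.} Build the polytopes inductively, as the paper does. Your construction works for any radius $R$, not just $k$: choose $L_v$ with $d(x,L_v)>R+1$. You have also shown that each $A_{k-1}$ is bounded, say $A_{k-1}\subseteq\overline{B}(x,\rho_{k-1})$. Run the construction at stage $k$ with $R_k=\max(k,\rho_{k-1})$. Then $A_{k-1}\subseteq\overline{B}(x,R_k)\subseteq\mathrm{Int}\,A_k$, so the sequence is increasing and the interiors still exhaust $H^n$. Equivalently, keep your original $A_k$ and pass to a subsequence $k_1<k_2<\cdots$ with $k_{j+1}>\rho_{k_j}$. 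With this change your proof is complete.
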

	
	\begin{proof}
		Suppose we have the increasing sequence of polytopes $A_i$. Given a ray originating at $x$ and a ball $B_r$ of radius $r$ about $x$, some $A_i$ contains $B_r$ in its interior since $\overline{B_r}$ is compact. Then the ray intersects $\partial A_i$ at some point; taking one of the hyperplanes defining $A_i$ that passes through that point, since the hyperplane does not intersect the interior of $A_i$, it does not intersect $B_r$, as desired.
		
		Conversely, suppose we have the condition in the statement of the lemma and we fix $x \in H^n$. Given $A_{i-1}$, since it is bounded we can choose a large ball $B_i$ centred at $x$ and containing $A_{i-1}$, of radius at least $i$. For every ray originating from $x$, we may choose a hyperplane in $\cL$ intersecting the ray and not intersecting $B_i$. Necessarily the hyperplane must intersect the ray transversally, otherwise they would be tangent and that would force the hyperplane to pass through $x$.
		
		Since the intersection is transverse, the same hyperplane will also intersect all rays through $x$ sufficiently close to the given ray. Therefore by compactness of $S^{n-1}$, it is possible to select finitely many hyperplanes that collectively intersect all possible rays emanating from $x$, and that individually do not intersect $B_i$. It follows that these hyperplanes define a convex $\cL$-polytope $A_i$ containing $B_i$ and therefore also containing $A_{i-1}$. Since the radii of the balls $B_i$ tend to infinity, the union of the polytopes $A_i$ is all of $H^n$.
	\end{proof}
	
	Now we will prove the Solomon-Tits theorem for $\T^\cL(H^n)$ assuming $\cL$ is admissible.

	\begin{theorem}[Solomon-Tits for $\cL$-polytopes in $H^n$]\label{main-hn}
		For all $n \geq 1$, if $\cL$ is admissible then $\T^\cL(H^n)$ is equivalent to a wedge of $(n-1)$-spheres and the apartment classes of \cref{apartment_pt} induce an isomorphism
		\[ \Pt^\cL(H^n) \overset{\cong}\longrightarrow \widetilde H_{n}(\ST^\cL(H^n)). \]
	\end{theorem}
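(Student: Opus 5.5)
We deduce the theorem from the $A$-local Solomon--Tits theorem (\cref{main-step-hn}) by passing to a colimit over the exhausting sequence $A_1 \subseteq A_2 \subseteq \cdots$ of convex $\cL$-polytopes supplied by admissibility (\cref{admissible-hn}). Replacing each $A_i$ by a slightly larger member of the sequence if needed, we may assume each $A_i$ is $n$-dimensional. The restriction inclusions $\cL|A_i \subseteq \cL|A_{i+1}$ then give inclusions of subcomplexes $\T^{\cL|A_1}(H^n) \subseteq \T^{\cL|A_2}(H^n) \subseteq \cdots \subseteq \T^\cL(H^n)$. Similarly, the subgroups $\Pt^{\cL|A_i}(H^n) \subseteq \Pt^\cL(H^n)$ are nested.

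First, the union of these subcomplexes is all of $\T^\cL(H^n)$. A simplex of $\T^\cL(H^n)$ is a flag $U_0 \subsetneq \cdots \subsetneq U_p$ of non-empty subspaces. Choose a point $x \in U_0$. Since the interiors of the $A_i$ exhaust $H^n$, some $A_i$ contains $x$, so every $U_j$ meets $A_i$ and the flag lies in $\T^{\cL|A_i}(H^n)$. Hence $\T^\cL(H^n) = \colim_i \T^{\cL|A_i}(H^n)$ as a directed union of subcomplexes, and likewise $\ST^\cL(H^n) = \colim_i \ST^{\cL|A_i}(H^n)$. Each stage is a wedge of $(n-1)$-spheres by \cref{main-step-hn}, hence $(n-2)$-connected. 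By compactness, every sphere mapping into the colimit factors through some stage, so $\T^\cL(H^n)$ is $(n-2)$-connected. Being an $(n-1)$-dimensional complex, it is therefore a wedge of $(n-1)$-spheres, exactly as in the third step of \cref{main-en}.

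Second, on the polytope side, $\Pt^\cL(H^n) = \bigcup_i \Pt^{\cL|A_i}(H^n)$. Every $\cL$-polytope is compact, so it lies in the interior of some $A_i$, hence in $A_i$. The groups $\Pt^{\cL|A_i}(H^n)$ are genuine subgroups by definition, and they are compatible with the inclusion into $C(H^n,\Z)$ of \cref{pt_subgroup}, so the colimit is this union.

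Finally, we form the commutative square comparing $\colim_i \Pt^{\cL|A_i}(H^n) \to \colim_i \widetilde H_n(\ST^{\cL|A_i}(H^n))$ with $\Pt^\cL(H^n) \to \widetilde H_n(\ST^\cL(H^n))$.

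\begin{itemize}
\item The top map is a colimit of isomorphisms, by \cref{main-step-hn}.
\item The right vertical map is an isomorphism because homology commutes with directed unions of subcomplexes.
\item The left vertical map is an isomorphism by the previous paragraph.
\end{itemize}

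The step needing care is commutativity of this square. We must check that a restricted apartment-like map $\partial P \to \T^{\cL|A_i}(H^n)$, composed with the inclusion into $\T^\cL(H^n)$, is apartment-like in the sense of \cref{st-apt-like-2}. This holds because $\CT^{\cL|A_i}(\spa F) \subseteq \CT^\cL(\spa F)$. Then \cref{apt-like-preserved-3-pt} shows the resulting class is the apartment class $\apt(P)$ of \cref{apartment_pt}. It follows that the bottom map is an isomorphism.
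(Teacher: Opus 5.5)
Your proposal is correct and follows the paper's proof essentially verbatim. You exhaust $\T^\cL(H^n)$ and $\Pt^\cL(H^n)$ by their $A_i$-local versions, apply the $A$-local Solomon--Tits theorem at each stage, and pass to the directed colimit through the same commutative square. You also spell out details the paper leaves implicit, all correctly: every flag and every polytope lies in some stage, and restricted apartment-like maps are apartment-like, which makes the square commute.
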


	\begin{proof}
		Take a sequence of convex $\cL$-polytopes $A_1 \subseteq A_2 \subseteq A_3 \subseteq \cdots$ whose union is all of $H^n$. As in the proof of \cref{main-en}, we have a homeomorphism
		\[\underset{i \to \infty}\colim\,\T^{\cL|A_i}(H^n) \overset{\cong}\longrightarrow \T^{\cL}(H^n). \]
		The complexes $\T^{\cL|A_i}(H^n)$ are all wedges of $(n-1)$-spheres by \cref{main-step-hn}, hence are $(n-2)$-connected and so is their colimit $\smash{\T^{\cL}(H^n)}$. Since the colimit is also $(n-1)$-dimensional, it is therefore equivalent to a wedge of $(n-1)$-spheres, proving the first half of the theorem.
		
		For the second half, we form the commutative diagram
		\[\begin{tikzcd} \underset{i \to \infty}\colim\, \Pt^{\cL|A_i}(H^n) \rar{\cong}[swap]{\apt} \dar[swap]{\cong} &[5pt] \underset{i \to \infty}\colim\, \widetilde{H}_n(\ST^{\cL|A_i}(H^n)) \dar{\cong} \\[-5pt]
		\Pt^{\cL}(H^n) \rar[swap]{\apt} & \widetilde{H}_n(\ST^{\cL}(H^n)).
		\end{tikzcd}\]
		in which the horizontal maps are given by the apartment class map and the restricted apartment class map. As in the proof of \cref{main-en}, the right-hand vertical map is an isomorphism since homology commutes with directed colimits, the top horizontal map is a colimit of isomorphisms by \cref{main-step-hn} so an isomorphism, and the left-hand vertical map is an isomorphism by the presentation of the polytope groups. Hence the bottom horizontal map is an isomorphism.
	\end{proof}
	
	\begin{remark}
		As in \cref{n0-once-suspended}, a ``once-suspended'' form of this theorem holds when $n = 0$. In fact, it is the same theorem as the one stated in \cref{n0-once-suspended}, because $H^0$ is isomorphic to $E^0$. In particular, we have an isomorphism $\apt \colon \Z \cong \Pt^\cL(H^0) \smash{\overset{\cong}\longrightarrow} \widetilde H_{0}(\ST^\cL(H^0)) \cong \Z$.
	\end{remark}

	\section{Apartments in \texorpdfstring{$\PT^\cL(S^n)$}{PT\unichar{"005E}\unichar{"1D4DB}(S\unichar{"005E}n)} and the spherical case}
	
	In this section we prove \cref{thm:generated-by-hyperplanes} in the case of spherical geometry. Though the idea of the argument is the same, the details require substantial modifications because the complexes $\PT^\cL(S^n)$ and $\ST^\cL(S^n)$ are not equivalent, see the comments following their introduction in \cref{df:st-and-pt}. 
	
	In particular, the definition of an apartment-like map $\partial P \to \T^\cL(S^n)$ has to be modified so as to get a map $P/\partial P \to \PT^\cL(S^n)$ instead of a map to $\ST^\cL(S^n)$. It is also more productive to think of $\cL$ as a collection of linear subspaces of $\R^{n+1}$, rather than as a collection of geodesic subspaces of $S^n$, although of course these are equivalent.
	
	\subsection{Admissibility and strongly convex polytopes}
	
	When we are working with $\PT^\cL$, it is convenient to think of $\cL$ as a collection of linear subspaces of $\R^{n+1}$. Doing so we can allow the zero subspace, even though we did not allow empty subsets of our geometry before, and this does not affect the definition of $\PT^\cL(S^n)$ since empty subspaces do not contribute to the homotopy colimit. This is only really important in the 0-dimensional case $S^0$, since it affects whether the individual points in $S^0$ are $\cL$-polytopes.
	
	\begin{definition}\label{admissible-sn}
	Let $n \geq 0$. We say that a collection $\cL$ of subspaces of $\R^{n+1}$ is \emph{admissible} if
		\begin{enumerate}
			\item $\cL$ is generated by hyperplanes, and 
			\item the intersection of all the hyperplanes in $\cL$ is the zero vector $\{0\}$.
		\end{enumerate}
	\end{definition}
	
	When $n = 0$, the only admissible collection of hyperplanes is the collection whose only element is the zero subspace $\{0\} \subseteq \R^1$. In particular, we have $\Pt^{\cL}(S^0) \cong \Z^2$ for this collection in contrast to the result for the collection with no hyperplanes in \cref{rem:spherical-case-polytope-group}.

	When $\cL$ is admissible, the associated geodesic subspaces of $S^n$ are also generated by hyperplanes, and the intersection of all of them is empty. For the purposes of the proofs in this section, we will think of the unit sphere as a quotient of the complement of the origin rather than a subspace, as this will allow us to apply linear changes of coordinates in our proofs:
	\[ S^n = (\R^{n+1} \setminus \{0\})/\R_+. \]
	
	\begin{lemma}\label{admissible-contains-coords}
		If $\cL$ is an admissible collection of subspaces of $\R^{n+1}$, then there exists a linear coordinate change after which $\cL$ contains all of the standard coordinate planes $\{ x \in \R^{n+1} : x_i = 0\}$.
	\end{lemma}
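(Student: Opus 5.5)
The plan is to pick $n+1$ hyperplanes in $\cL$ whose normal vectors form a basis of $(\R^{n+1})^*$, and then move them to the coordinate hyperplanes by a linear change of coordinates. Each hyperplane $U \in \cL^{n-1}$, viewed as a linear hyperplane of $\R^{n+1}$, is the kernel of a non-zero linear functional $\phi_U$, unique up to scaling. Admissibility (\cref{admissible-sn}) says that $\bigcap_{U \in \cL^{n-1}} U = \{0\}$, which is equivalent to saying that the common kernel of all the $\phi_U$ is zero. By linear algebra, this holds exactly when the functionals $\phi_U$ span the dual space $(\R^{n+1})^*$.

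Since $(\R^{n+1})^*$ is finite-dimensional, a spanning set contains a basis. So we extract $n+1$ hyperplanes $U_0,\ldots,U_n \in \cL^{n-1}$ whose functionals $\phi_0,\ldots,\phi_n$ are linearly independent. This is the only step with any content: even if $\cL^{n-1}$ is infinite, the intersection condition involves all of them at once, so we first note that the intersection stabilises. Choose hyperplanes one at a time, each time taking one that strictly lowers the dimension of the current intersection. This must reach $\{0\}$ after at most $n+1$ steps, since otherwise every hyperplane in $\cL$ would contain the current non-zero intersection, contradicting admissibility.

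Next consider the linear map $\Phi = (\phi_0,\ldots,\phi_n)\colon \R^{n+1} \to \R^{n+1}$. It is invertible because the $\phi_i$ form a basis, and it sends $U_i = \ker \phi_i$ onto the coordinate hyperplane $\{x_i = 0\}$. Applying $\Phi$ as the coordinate change, the transformed collection $\Phi(\cL)$ contains every coordinate hyperplane.

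Finally, we check that the hypotheses survive this change. A linear isomorphism preserves intersections, so $\Phi(\cL)$ is still generated by hyperplanes and still admissible. It also induces a homeomorphism of $S^n = (\R^{n+1}\setminus\{0\})/\R_+$. This is exactly why the paper models $S^n$ as this quotient: the coordinate change is generally not an isometry, but it is a legitimate change of coordinates for the combinatorial and topological statements that follow.

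I expect no real obstacle. The only point needing care is the reduction from a possibly infinite family of hyperplanes to finitely many, which the dimension-counting argument above handles.
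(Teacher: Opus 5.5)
Your proof is correct and is essentially the paper's argument. The paper also chooses hyperplanes $U_1,\ldots,U_{n+1}\in\cL$ one at a time so that each strictly lowers the dimension of the running intersection, and then passes to coordinates given by vectors $v_i$ spanning the lines $U_1\cap\cdots\cap\widehat{U_i}\cap\cdots\cap U_{n+1}$; these $v_i$ are, up to scaling, the dual basis to your functionals $\phi_i$, so your map $\Phi=(\phi_0,\ldots,\phi_n)$ is the same coordinate change phrased on the dual side.
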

	
	\begin{proof}
		Select any hyperplane $U_1 \in \cL$, and then inductively choose a hyperplane $U_i \in \cL$ such that the intersection $U_1 \cap \cdots \cap U_i$ has dimension $(n+1)-i$, giving $(n+1)$ hyperplanes $U_1$, $\ldots$, $U_{n+1}$ whose intersection is the zero vector. Any intersection of $n$ of the hyperplanes must be 1-dimensional, and we take $v_i$ to be a vector spanning the line $U_1 \cap \cdots \cap \widehat{U_i} \cap \cdots \cap U_{n+1}$. Then in the coordinate system given by $v_1$, $\ldots$, $v_{n+1}$, the hyperplanes $U_1$, $\ldots$, $U_{n+1}$ become the standard coordinate hyperplanes.
	\end{proof}
	
	We will only be able to define apartments for convex polytopes that are contained in an open hemisphere. This condition will arise frequently, so we give it a special name.
	\begin{definition}\label{strong-convex}
		A polytope $P \subseteq S^n$ is \emph{strongly convex} if it is convex (a finite intersection of half-spaces) and contained in an open hemisphere.
	\end{definition}
	
	\begin{lemma}\label{strongly-convex-intersection}
		A convex polytope $P$ is strongly convex if and only if the hyperplanes that define $P$ have as their intersection the zero subspace of $\R^{n+1}$.
	\end{lemma}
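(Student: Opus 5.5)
The plan is to work entirely with the cone in $\R^{n+1}$, using the model $S^n = (\R^{n+1}\setminus\{0\})/\R_+$. Write the convex polytope as $P = \{x : f_1(x) \geq 0, \ldots, f_k(x) \geq 0\}/\R_+$ for linear functionals $f_i$ whose kernels $U_i = \ker f_i$ are the defining hyperplanes. Let $C = \{x \in \R^{n+1} : f_i(x) \geq 0 \ \forall i\}$ be the corresponding closed convex cone, so that $P = (C \setminus \{0\})/\R_+$. The key observation is that
\[ C \cap (-C) = \{x : f_i(x) = 0 \ \forall i\} = U_1 \cap \cdots \cap U_k. \]
This set is the lineality space of $C$, and so it depends only on $P$ and not on the chosen defining half-spaces. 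Hence the statement makes sense for any defining collection, in particular the minimal one of \cref{faces_to_L}.

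For the direction ``strongly convex $\Rightarrow$ intersection is zero'', suppose $P$ lies in an open hemisphere $\{w > 0\}$ for some linear functional $w$. If $0 \neq v \in U_1 \cap \cdots \cap U_k$, then both $v$ and $-v$ lie in $C$. So both $[v]$ and $[-v]$ lie in $P$, which would require $w(v) > 0$ and $w(-v) > 0$, a contradiction.

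For the converse, suppose $U_1 \cap \cdots \cap U_k = \{0\}$ and set $w = f_1 + \cdots + f_k$. For $x \in C \setminus \{0\}$ every $f_i(x) \geq 0$. Since $x \notin \bigcap_i U_i$, some $f_i(x) > 0$, and therefore $w(x) > 0$. Thus $P \subseteq \{w > 0\}$. In particular $w \neq 0$, so this set is an open hemisphere, and $P$ is strongly convex. The degenerate case $k = 0$ is covered by the convention that the empty intersection is all of $\R^{n+1}$, which is nonzero when $n \geq 0$, matching the fact that $P = S^n$ is not strongly convex.

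There is no real obstacle here. The only point needing care is to phrase hemispheres and the defining hyperplanes linearly in $\R^{n+1}$ rather than on the round sphere. One must also note that antipodal points $[v]$ and $[-v]$ can never both lie in an open hemisphere; the explicit choice $w = \sum_i f_i$ avoids any appeal to Farkas' lemma.
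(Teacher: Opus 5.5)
Your proof is correct and follows the paper's outline. The ``only if'' direction is the same antipodal-pair observation. For the ``if'' direction, the paper uses \cref{admissible-contains-coords} to turn $n+1$ independent defining hyperplanes into coordinate hyperplanes, so that $P$ lies in an orthant and hence in the hemisphere where the sum of the coordinates is positive; that amounts to a positive combination of $n+1$ of your $f_i$. Your direct choice of $w=\sum_i f_i$ avoids the coordinate change, and identifying $\bigcap_i U_i$ with the lineality space $C\cap(-C)$ shows the condition does not depend on which defining half-spaces are chosen, a point the paper leaves implicit.
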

	
	\begin{proof}
		For the ``if'' direction, suppose that the intersection of the hyperplanes defining $P$ is the zero subspace. Then, by \cref{admissible-contains-coords} and up to linear change of coordinates, $P$ is contained in a region of the form $\{ x \in \R^{n+1} \mid x_i \geq 0\}$, which is contained in an open hemisphere. Since any linear coordinate change preserves open hemispheres, $P$ itself is also contained in an open hemisphere, and hence strongly convex. The ``only if'' direction follows by observing that if the intersection of the hyperplanes defining $P$ is non-zero, then $P$ contains a pair of antipodal points, and hence does not lie in a single open hemisphere.
	\end{proof}
	
	\begin{lemma} 
		A collection of hyperplanes $\cL$ is admissible if and only if there exists a strongly convex $\cL$-polytope. If so, every $\cL$-polytope decomposes into strongly convex $\cL$-polytopes.
	\end{lemma}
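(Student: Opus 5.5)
The plan is to prove both directions of the equivalence using \cref{strongly-convex-intersection}, and then derive the decomposition statement from \cref{admissible-contains-coords}.

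For the ``only if'' direction, assume $\cL$ is admissible. By \cref{admissible-contains-coords}, after a linear change of coordinates $\cL$ contains the $n+1$ coordinate hyperplanes $\{x_i = 0\}$. The positive orthant $\{x \mid x_i \geq 0 \text{ for all } i\}$, viewed in $S^n = (\R^{n+1}\setminus\{0\})/\R_+$, is then a convex $\cL$-polytope. It is $n$-dimensional, and its defining hyperplanes intersect in $\{0\}$, so it is strongly convex by \cref{strongly-convex-intersection}.

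For the ``if'' direction, suppose $P$ is a strongly convex $\cL$-polytope. By \cref{strongly-convex-intersection}, the hyperplanes defining $P$ intersect in the zero subspace. These hyperplanes lie in $\cL$, so the intersection of all hyperplanes in $\cL$ is contained in theirs and is therefore $\{0\}$. Hence $\cL$ is admissible.

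For the decomposition statement, it suffices to treat a (weakly) convex $\cL$-polytope $P$, since every $\cL$-polytope subdivides into convex ones.
\begin{enumerate}
\item Again use \cref{admissible-contains-coords} to place the coordinate hyperplanes $\{x_i=0\}$, $i = 0,\ldots,n$, in $\cL$.
\item Cut $P$ along all $n+1$ of these hyperplanes. The closures of the $n$-dimensional pieces are the sets $P \cap O_\epsilon$, where $O_\epsilon = \{x \mid \epsilon_i x_i \geq 0\}$ for a sign vector $\epsilon \in \{\pm 1\}^{n+1}$.
\item Each such piece is cut out by the hyperplanes of $P$ together with all the coordinate hyperplanes, and this collection has intersection $\{0\}$. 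The piece is therefore strongly convex by \cref{strongly-convex-intersection}.
\item Alternatively, one can see strong convexity directly: $O_\epsilon$ lies in the open hemisphere $\{\sum_i \epsilon_i x_i > 0\}$. Indeed, the only point of $O_\epsilon$ with $\sum_i \epsilon_i x_i = 0$ is $x = 0$, which is not a point of the sphere.
\end{enumerate}
These pieces have disjoint interiors and cover $P$, so they form a subdivision into strongly convex $\cL$-polytopes; any pieces of lower dimension are discarded.

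The main subtlety is bookkeeping around the convention that $\cL$-polytopes are $n$-dimensional. One must check that discarding the lower-dimensional intersections still leaves a genuine subdivision, i.e.\ that the union of the closures of the top-dimensional pieces is all of $P$; this follows because $P$ is the closure of its interior. One must also handle $n = 0$ separately. There the only admissible $\cL$ is $\{\{0\}\}$, and the two points of $S^0$ are strongly convex polytopes, each defined by the empty intersection being $\{0\} \subseteq \R^1$. This case should be checked against the convention of \cref{admissible-sn}. Otherwise the argument is routine.
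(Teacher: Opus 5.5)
Your proposal is correct and follows essentially the same route as the paper. For the ``only if'' direction and the decomposition, \cref{admissible-contains-coords} supplies the coordinate hyperplanes; their $2^{n+1}$ orthants are strongly convex $\cL$-polytopes, and cutting along them splits any $\cL$-polytope into pieces lying in open hemispheres. The ``if'' direction is read off from \cref{strongly-convex-intersection}, exactly as in the paper. Your extra bookkeeping makes explicit what the paper leaves implicit: you discard the lower-dimensional pieces, and you check directly that $\sum_i \epsilon_i x_i = \sum_i |x_i| > 0$ on $O_\epsilon$. The only blemish is the $n=0$ remark: each point of $S^0$ is cut out by a half-line bounded by the hyperplane $\{0\} \subseteq \R^1$, not by an ``empty intersection''.
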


	\begin{proof}
		For the ``only if'' direction, we note that if $\cL$ is admissible then after a linear coordinate change it contains the $2^{n+1}$ spherical simplices cut out by the $(n+1)$ standard coordinate hyperplanes in $\R^{n+1}$. For the addendum observe that each of these simplices is contained in an open hemisphere, and that any $\cL$-polytope can be cut along these hyperplanes to make pieces that are contained in an open hemisphere. The ``if'' direction follows by noting that if there exists a strongly convex $\cL$-polytope $P$, then $\cL$ is admissible by \cref{strongly-convex-intersection}.
	\end{proof}
	
	\begin{lemma}\label{strongly-convex-contractible}
		If $P$ is strongly convex then every face of $P$ is contractible.
	\end{lemma}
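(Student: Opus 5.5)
Every face $F$ of $P$ is itself a subset of $P$, and I would show that it is a strongly convex (possibly lower-dimensional) spherical polytope. Such a set is then contractible by radial projection to an affine chart. Concretely, since $P$ is contained in an open hemisphere, there is a linear functional $\ell \colon \R^{n+1} \to \R$ with $\ell > 0$ on every representative of every point of $P$. Since $F \subseteq P$, the same holds for $F$. The affine hyperplane $E = \{\ell = 1\}$ meets each ray $\R_+ x$ with $\ell(x) > 0$ exactly once. This gives a homeomorphism from the open hemisphere $\{\ell > 0\}/\R_+$ onto the affine space $E \cong \R^n$; this is the gnomonic or Klein-type projection implicit in the remark after \cref{faces_to_L}.

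The key steps are as follows. By \cref{faces_to_L}, $F = P \cap \spa F$, where $\spa F$ is a linear subspace of $\R^{n+1}$. So $F$ corresponds to the cone $C_F = \{x \in \R^{n+1}\setminus\{0\} : x \text{ satisfies the defining inequalities of } P,\ x \in \spa F\}$. This cone is an intersection of closed half-spaces and linear subspaces, hence convex as a subset of $\R^{n+1}$ (after adding back $0$). Under the radial homeomorphism above, $F$ corresponds to $C_F \cap E$. This set is an intersection of the affine space $E$ with finitely many closed half-spaces and a linear subspace, hence a convex subset of $E \cong \R^n$. It is also non-empty, since faces are non-empty by convention, as in \cref{faces_to_L}; if the empty set were counted as a face, the statement would be read for non-empty faces only. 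A non-empty convex subset of a Euclidean space is contractible via straight-line homotopy to any of its points. Transporting this homotopy back along the radial homeomorphism shows that $F$ is contractible.

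The main point to check carefully is that the radial projection really is a homeomorphism on the relevant set and carries convexity correctly. In particular, the convex combination $(1-t)y + t y_0$ of two points $y, y_0 \in C_F \cap E$ stays in $C_F$ and has $\ell = 1$. Both follow from linearity of $\ell$ and of the defining inequalities. The essential use of strong convexity is the existence of $\ell$ with $\ell > 0$ on $P$. Without it the argument fails: for instance, a weakly convex $P$ may contain antipodal points, and its faces can be spheres, as noted in the proof of \cref{apt-like-preserved-3-pt}. I therefore expect no serious obstacle, only the bookkeeping of passing between $S^n = (\R^{n+1}\setminus\{0\})/\R_+$ and the affine chart $E$.
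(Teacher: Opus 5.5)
Your proof is correct and is essentially the paper's argument. The paper embeds $P$ in an open hemisphere and contracts a face to one of its points along the unique geodesics there, and your radial chart $\{\ell>0\}/\R_+ \cong \{\ell = 1\}$ straightens those geodesics into line segments, while also making explicit two points the paper leaves implicit: that a face, as the intersection of $P$ with a linear subspace, is closed under these geodesics, and that faces are taken to be non-empty.
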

	
	\begin{proof}
		Embed $P$ in an open hemisphere and note that any two points in this hemisphere are connected by a unique geodesic. This can be used to construct deformation retraction from a face onto any one of its points.
	\end{proof}
	
	\subsection{Apartments in \texorpdfstring{$\PT^\cL(S^n)$}{PT\unichar{"005E}\unichar{"1D4DB}(S\unichar{"005E}n)}}
	
	Next we recall the definition of $\PT^{\cL}(S^n)$ from \cref{df:st-and-pt}. It will be convenient to write it in the following way. Use $V$ to denote any linear subspace of $\R^{n+1}$ and $S(V)$ the corresponding sphere $(V \setminus \{0\})/\R_+$ with positive real numbers acting by scaling. Then for non-empty $\cL$ the definition of $\PT^{\cL}(S^n)$ from \cref{df:st-and-pt} becomes
		\[ \PT^{\cL}(S^n) = \frac{ \underset{V \in \cL}\hocolim\, S(V) }{ \underset{V \in \cL \setminus \{\R^{n+1}\}}\hocolim\, S(V) },\]
	where once more we fix the concrete model for the homotopy colimits provided by the Bousfield--Kan formula. This is given by a geometric realisation of a simplicial object and hence gives us point-set topological control over the terms appearing in this quotient:
	
	\begin{lemma}\label{pt-subcomplex}
		The space $\hocolim_{V \in \cL} S(V)$ is an $n$-dimensional cell complex, and for every sub-poset $\cL' \subseteq \cL$ the subspace $\hocolim_{V \in \cL'} S(V)$ is a subcomplex.
	\end{lemma}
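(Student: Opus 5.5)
The plan is to unwind the Bousfield--Kan model and cellulate each of its building blocks compatibly. Concretely, $\hocolim_{V \in \cL} S(V)$ is the quotient of
\[ \coprod_{p \geq 0}\ \coprod_{V_0 \subsetneq V_1 \subsetneq \cdots \subsetneq V_p} \Delta^p \times S(V_0) \]
by the face identifications. The identification for $d_0$ sends $\Delta^{p-1} \times S(V_0)$ into $\Delta^{p-1} \times S(V_1)$ along the inclusion $S(V_0) \hookrightarrow S(V_1)$; every other face map keeps the sphere $S(V_0)$ fixed and only drops a $V_i$ from the flag. Since the flags are strictly increasing and every $V \in \cL$ is a linear subspace of $\R^{n+1}$, only nondegenerate simplices occur. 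Hence this is the realisation of a semi-simplicial space whose $p$-simplices form the disjoint union over $p$-flags of the spheres $S(V_0)$.

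The key step is to choose CW structures on all the spheres $S(V)$, $V \in \cL$, simultaneously, so that each $S(V_0) \subseteq S(V_1)$ is a subcomplex. I would do this using the hyperplane arrangement itself rather than an abstract structure. The difficulty is that $\cL$ may be infinite, so I cannot simply cut $S^n$ by all its hyperplanes. The first thing to try is a second cellulation: the cells of $\Delta^p \times S(V_0)$ would be $\Delta^p \times \sigma$ for $\sigma$ a cell of $S(V_0)$. These only need to be compatible with the single inclusion map $S(V_0) \to S(V_1)$, and that inclusion is always the inclusion of a great subsphere. So it suffices to fix, for each $V$, a CW structure on $S(V)$ in which every great subsphere $S(W)$ with $W \in \cL$, $W \subseteq V$, is a subcomplex. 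For infinitely many $W$ that cannot hold. Instead I would replace cells by the simpler structure of a ``cell complex'' in the sense used in \cref{apt-like-preserved-3-pt}: a space built from pieces $\Delta^p \times S(V_0)$ of dimension $p + \dim V_0 - 1$, glued along subspaces of lower dimension, and refined into genuine cells only where needed.

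Honestly, the cleanest route is to cellulate each sphere $S(V_0)$ with the standard minimal structure: one $0$-cell and one top cell, or better, the structure given by a fixed full flag inside $V_0$. Then each product $\Delta^p \times S(V_0)$ becomes a CW complex. The attaching map for $d_0$ is cellular because the inclusion $S(V_0) \to S(V_1)$ is homotopic to a cellular map. To make the gluing strict, however, I would pass to the realisation formula in which the $d_0$ face is glued via the mapping cylinder. Dimension counting then gives $\dim \Delta^p \times S(V_0) = p + \dim V_0 - 1 \leq n$, because a strict flag of length $p$ ending in $\R^{n+1}$ forces $\dim V_0 \leq n+1-p$. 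This is the $n$-dimensionality claim.

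For the subcomplex statement, the cells indexed by flags lying entirely in $\cL'$ are closed under all the face identifications, since faces of a flag in $\cL'$ are again flags in $\cL'$. Their union is therefore $\hocolim_{V \in \cL'} S(V)$, which makes it a subcomplex. I expect the main obstacle to be making $d_0$ strictly cellular. I would first try the flag-indexed cellulation: give $S(V)$ the cell structure cut out by all $V' \in \cL$ that are contained in $V$. If infinitude is a problem, I would reinterpret ``cell complex'' in the looser sense already used in the paper (a filtered space with $n$-dimensional cell-like pieces), which suffices for the later homology arguments.
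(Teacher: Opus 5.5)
There is a genuine gap, and it comes from a false premise in your second paragraph. You assume that the $d_0$ identification, i.e.\ the inclusion $S(V_0)\hookrightarrow S(V_1)$, has to be cellular, or even an inclusion of subcomplexes. That sends you looking for cell structures on all the $S(V)$ that are simultaneously compatible. As you correctly note, this is impossible when $\cL$ is infinite, and none of your workarounds closes the gap:
\begin{itemize}
\item ``cellular because it is homotopic to a cellular map'' is simply false;
\item regluing the $d_0$ faces along mapping cylinders produces a different space, whereas the lemma concerns the actual Bousfield--Kan model, and its later use depends on that model (e.g.\ that the square of sub-homotopy-colimits in the inductive step of \cref{main-sn} is a strict pushout of subcomplexes);
\item the final ``looser sense'' of cell complex is never made precise.
\end{itemize}

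The missing idea is that a cell complex, in the sense of an iterated pushout along boundary inclusions $\partial D^d\hookrightarrow D^d$, needs neither cellular attaching maps nor attachment in order of dimension. So you may choose an arbitrary cell structure on each $S(V)$, independently.

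The Bousfield--Kan realisation is built by skeletal pushouts that attach $\coprod\Delta^k\times S(U_0)$ along $\coprod\partial\Delta^k\times S(U_0)$ to the $(k-1)$-skeleton. For a fixed flag, the inclusion $\partial\Delta^k\times S(U_0)\hookrightarrow\Delta^k\times S(U_0)$ is itself an iterated pushout along the product cells $\Delta^k\times D^m$, attached in increasing $m$. The part $\partial\Delta^k\times D^m$ of such a cell's boundary lands in the already-built $(k-1)$-skeleton; its $d_0$ face goes into some $\Delta^{k-1}\times S(U_1)$. It is irrelevant how $D^m$ sits relative to the cells of $S(U_1)$.

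So the cells are exactly the product cells $\Delta^k\times D^m$ you wrote down and then abandoned, and no compatibility between the chosen structures is needed. With this in place, your dimension count $k+\dim U_0-1\le n$ and your subcomplex argument (flags in $\cL'$ are closed under taking faces) go through as you wrote them.
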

	
	\begin{proof}
		We fix some cell complex structure on $S(V)$ for every subspace $V$ and we define maps into the homotopy colimit of the form
		\[ \Delta^k \times D^m \to \underset{V \in \cL}\hocolim\, S(V), \]
		one for each $k$-tuple of composable inclusions of linear subspaces $\varnothing \subsetneq U_0 \subsetneq U_1 \subsetneq \cdots \subsetneq U_k \subseteq \R^{n+1}$ and choice of $m$-cell $D^m \to S(U_0)$. Note that the maximum dimension of any one of these cells is $k+\dim\,S(U_0)$, which is no greater than
		\[ ((n+1) - \dim U_0) + (\dim U_0 - 1) = n. \]
		To see that these give a cell complex structure, we recall the Bousfield--Kan formula expresses the homotopy colimit as a colimit of skeleta, and each skeleton is constructed from the previous one as a pushout
		\[ \begin{tikzcd}
			\coprod_{U_0 \subsetneq U_1 \subsetneq \cdots \subsetneq U_k} \partial \Delta^k \times S(U_0) \dar \rar &
			\coprod_{U_0 \subsetneq U_1 \subsetneq \cdots \subsetneq U_k} \Delta^k \times S(U_0) \dar \\[-5pt]
			\left(\underset{V \in \cL'}\hocolim\, S(V)\right)^{(k-1)} \rar &
			\left(\underset{V \in \cL'}\hocolim\, S(V)\right)^{(k)}.
		\end{tikzcd} \]
		Each of the maps $\partial\Delta^k \times S(U_0) \to \Delta^k \times S(U_0)$ is formed as an iterated pushout along the cells we constructed for a fixed choice of $U_0,\ldots,U_k$. Putting these all together, we deduce that the horizontal maps of the above diagram are formed as pushouts along the cells we chose for a fixed value of $k$. Therefore the entire homotopy colimit is formed as an iterated pushout along all of the cells that we chose.
	\end{proof}
		
	\begin{definition}\label{pt-apt-like}
		If $P$ is a strongly convex $\cL$-polytope in $S^n$, a map
		\[ P \longrightarrow \underset{V \in \cL}\hocolim\, S(V) \]
		is \emph{apartment-like} if for every face $F \subseteq P$ of any dimension, including $P$ itself, the image of $F$ lands in the subspace
		\begin{equation}\label{face_complex_spherical_case}
			N(F) := \underset{V \in (\cL \cap \spa F)}\hocolim\, (S(V) \cap F).
		\end{equation}
	\end{definition}
	
	\begin{lemma}\label{apt-like-preserved-3}
		If $P$ is strongly convex, then the space of apartment-like maps in the sense of \cref{pt-apt-like} is weakly contractible.
	\end{lemma}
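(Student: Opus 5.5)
The plan is to run the same cell-by-cell extension argument as in \cref{apt-like-preserved} and \cref{apt-like-preserved-3-pt}, with the complexes $\CT^\cL(\spa F)$ replaced by the subspaces $N(F)$ of \eqref{face_complex_spherical_case}. Since $P$ is strongly convex, \cref{faces_to_L} applies and the faces of $P$ give a regular cell structure on $P$, each face $F$ being a closed cell; here $P$ itself is the top cell, so unlike before we map out of all of $P$ rather than $\partial P$. Two structural facts are needed. First, the assignment $F \mapsto N(F)$ respects inclusion: if $F \subseteq F'$ then $\spa F \subseteq \spa F'$, so $\cL \cap \spa F \subseteq \cL \cap \spa F'$, and $S(V) \cap F \subseteq S(V) \cap F'$. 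Hence $N(F) \subseteq N(F')$ as a sub-homotopy-colimit, which by \cref{pt-subcomplex} (applied with the restricted diagram) is a reasonable subspace. Second, each $N(F)$ is contractible; this is the only genuinely new ingredient.

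For contractibility of $N(F)$, the diagram $V \mapsto S(V) \cap F$ is indexed by the poset $\cL \cap \spa F$, which has the terminal object $\spa F$. A homotopy colimit over a category with terminal object is equivalent to the value at the terminal object, so $N(F) \simeq S(\spa F) \cap F = F$. Here $S(\spa F)\cap F = F$ because $F \subseteq \spa F$. By \cref{strongly-convex-contractible}, $F$ is contractible because $P$ is strongly convex. I would make the equivalence explicit: the Bousfield--Kan model maps onto the value at the terminal object via the natural projection $\hocolim \to \colim = F$, and this projection is an equivalence when there is a terminal object. A cleaner option is to observe that the projection has a section (include $F$ as the vertex $\spa F$) together with a straight-line homotopy in the simplicial coordinate toward the terminal vertex.

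With these in hand, take an $(m-1)$-sphere of apartment-like maps $S^{m-1} \times P \to \hocolim_{V \in \cL} S(V)$ and extend it over $D^m \times P$. Proceed by induction over the faces $F$ in order of increasing dimension. At each step the map is already defined on $(S^{m-1} \times F) \cup (D^m \times \partial F)$. Because the assignment respects inclusions, this map lands in $N(F)$. Contractibility of $N(F)$ gives an extension over $D^m \times F$ inside $N(F)$. The map obtained at the end is apartment-like by construction.

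Nonemptiness of the space, which is the case $m = 0$, is handled by the same induction starting from vertices. Each vertex $v$ has $N(v) \simeq \{v\}$, which is nonempty.

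The main obstacle I expect is the point-set care in the contractibility step. One must check that $N(F)$ really is the homotopy colimit of the restricted diagram, so that the terminal-object argument applies to this particular subspace of the Bousfield--Kan model. One must also check that the cells $D^m \times F$ extend into $N(F)$ and not merely into some larger space. Both follow from the explicit skeletal description in the proof of \cref{pt-subcomplex}.
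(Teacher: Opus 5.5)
Your proposal is correct and follows essentially the same route as the paper: the cell-by-cell extension over $D^m \times P$ with one cell per face, using that $F \subseteq F'$ implies $N(F) \subseteq N(F')$, and that $N(F) \simeq S(\spa F) \cap F = F$ (via the terminal object $\spa F$) is contractible by \cref{strongly-convex-contractible}. Your extra remarks on the explicit deformation retraction in the Bousfield--Kan model and on non-emptiness only spell out details the paper leaves implicit.
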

	
	\begin{proof}
		The proof is the same as in \cref{apt-like-preserved} and \cref{apt-like-preserved-2}, except that we are mapping out of $D^m \times P$, and as in \cref{apt-like-preserved-2} we think of $P$ as a cell complex with one cell for each face. We only need to check that for the complex $N(F)$ defined in \eqref{face_complex_spherical_case}, an inclusion $F \subseteq F'$ implies $N(F) \subseteq N(F')$, and that furthermore $N(F)$ is contractible for each face $F$ of $P$.
		
		The first of these conditions follows from inspecting the definition of the homotopy colimit. For the second condition, the homotopy colimit in \eqref{face_complex_spherical_case} has a terminal object, namely the span of $F$, so it is homotopy equivalent to $S(\spa F) \cap F \cong F$. But $F$ is contractible by \cref{strongly-convex-contractible}, and therefore the homotopy colimit is contractible, as required.
	\end{proof}
	
	\begin{definition}\label{apartment_pt_sn}
		For each strongly convex polytope $P$, we define the \emph{suspended apartment map}
		\[ S^n \cong \frac{P}{\partial P} \longrightarrow \frac{ \underset{V \in \cL}\hocolim\, S(V) }{ \underset{V \in \cL \setminus \{\R^{n+1}\}}\hocolim\, S(V) } = \PT^\cL(S^n)\]
		as the map induced by any apartment-like map $P \to \hocolim_{V \in \cL} S(V)$. The resulting \emph{apartment class} is the image of the fundamental class under the induced map on homology
		\[ \apt[P] \in \widetilde{H}_n(\PT^\cL(S^n)). \]
		As in \cref{apartment_pt}, we fix an orientation on $S^n$, which induces an orientation on each sphere $P/\partial P$.
	\end{definition}
	
	Note that along the forgetful map $\PT^\cL(S^n) \to \ST^\cL(S^n)$, these apartment classes go to the apartment classes defined previously in \cref{apartment_pt}, justifying the overlap in terminology. The proof of the following is the same as in \cref{pt-apt-well-defined}:

	\begin{lemma}\label{pt-apt-well-defined-sn}
		When $\cL$ is admissible, the apartment-like maps induce a well-defined homomorphism
		\[\apt \colon \Pt^\cL(S^n) \longrightarrow \widetilde{H}_{n}(\PT^\cL(S^n)).\]
	\end{lemma}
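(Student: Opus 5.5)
We follow the proof of \cref{pt-apt-well-defined}, with two preliminary reductions adapted to the spherical setting. First, apartment classes are so far only defined for strongly convex $\cL$-polytopes. Since $\cL$ is admissible, every $\cL$-polytope decomposes into strongly convex $\cL$-polytopes (cut along $n+1$ hyperplanes of $\cL$ with zero intersection, as in \cref{admissible-contains-coords}). Moreover, any subdivision of a strongly convex polytope consists of strongly convex pieces, since they lie in the same open hemisphere. Hence $\Pt^\cL(S^n)$ has a presentation with generators the strongly convex $\cL$-polytopes and relations the subdivisions among them. To see this, consider two decompositions of an arbitrary $\cL$-polytope into strongly convex pieces. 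They have a common refinement, obtained by cutting along all the hyperplanes involved, and both expressions map to the same element. So we may set $\apt[P] := \sum_i \apt[P_i]$ for any strongly convex decomposition $P=\bigcup P_i$, once we check compatibility with subdivisions of strongly convex polytopes. By \cref{apt-like-preserved-3}, $\apt[P]$ for strongly convex $P$ is independent of the choice of apartment-like map.

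The main step is then: for a strongly convex $P$ subdivided into strongly convex $\cL$-polytopes $P_i$, show $\apt[P]=\sum_i\apt[P_i]$. We take all hyperplanes defining $P$ and the $P_i$; together these cut $P$ into a polytopal complex $K$. Every cell $F$ of $K$ is a strongly convex polytope whose span lies in $\cL$ by \cref{faces_to_L}. We build a map $\phi\colon P\to\hocolim_{V\in\cL}S(V)$ inductively over the cells of $K$ in increasing dimension, sending each cell $F$ into $N(F)$. The extension exists because $N(F)$ is contractible and $N(F')\subseteq N(F)$ for $F'\subseteq F$, exactly as in the proof of \cref{apt-like-preserved-3}.

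It remains to check that $\phi$ restricted to each $P_i$, and $\phi$ itself, are apartment-like in the sense of \cref{pt-apt-like}. This is the main obstacle, since the faces of $P_i$ are unions of cells of $K$ rather than single cells. If $G$ is a face of $P_i$ and $F\subseteq G$ is a cell of $K$, then $\spa F\subseteq \spa G$ and $F\subseteq G$. By naturality of the Bousfield--Kan model in the indexing poset and in the subspaces, $N(F)\subseteq N(G)$, since $\cL\cap\spa F\subseteq \cL\cap\spa G$ and $S(V)\cap F\subseteq S(V)\cap G$. So $\phi(G)\subseteq N(G)$, and likewise for the faces of $P$.

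Composing with the quotient to $\PT^\cL(S^n)$, the boundaries $\partial P_i$ all land in the denominator. Indeed, every proper face has span a proper subspace, and $N(F)$ for such $F$ lies in $\hocolim_{V\in\cL\setminus\{\R^{n+1}\}}S(V)$. Hence the map $P/\partial P\to\PT^\cL(S^n)$ factors through the collapse map $P/\partial P\to\bigvee_i P_i/\partial P_i$, followed by the apartment maps of the $P_i$. The collapse has degree one on each wedge summand by our orientation convention. Taking fundamental classes gives the relation, and the construction is $G_\cL$-equivariant because $G_\cL$ preserves $\cL$ and apartment-like maps.
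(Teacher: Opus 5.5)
Your proposal is correct and follows the paper's route. The paper just says the argument is the same as for \cref{pt-apt-well-defined}, and you supply what that tacitly needs: using admissibility to reduce to strongly convex polytopes (the only ones for which spherical apartment maps are defined), then building the compatible map on the polytopal complex cell by cell into the contractible $N(F)$. One caveat on your closing aside about equivariance, which the statement does not require: $P/\partial P$ is oriented via a fixed orientation of $S^n$, so an orientation-reversing $g\in G_\cL$ satisfies $\apt[gP]=-g_*\apt[P]$, and the map is equivariant only after twisting by the orientation character.
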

	
	\subsection{Proof in the spherical case for \texorpdfstring{$\cL$}{\unichar{"1D4DB}} admissible}
	
	Now that we have introduced the new notion of apartment for $S^n$, we finish the remaining preliminaries and prove \cref{thm:generated-by-hyperplanes} in the case that $\cL$ is admissible.
	
	Let $U$ be any hyperplane in $\R^{n+1}$, and recall from \cref{cup_and_cap_def} that $\cL \cap U$ is the set of subspaces of the form $L \cap U$ for $L \in \cL$, while $\cL \Cup U$ is the set of intersections of all hyperplanes in the set $\cL^{n-1} \cup \{U\}$.
	
	\begin{lemma}\label{new_generators_sn}
		If $\cL$ is generated by hyperplanes and $U \subseteq \R^{n+1}$ is a hyperplane, then any $(n-1)$-dimensional convex $(\cL \cap U)$-polytope $P \subseteq S(U)$ is a facet of two convex $n$-dimensional $(\cL \Cup U)$-polytopes $Q,Q' \subseteq S^n$. Furthermore if $P$ is strongly convex then so are $Q$ and $Q'$.
	\end{lemma}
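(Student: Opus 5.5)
The plan is to build $Q$ and $Q'$ directly from the half-spaces defining $P$, using the fact that on the sphere every intersection of half-spaces is automatically bounded, so there is no unbounded-cone issue as in \cref{new_generators}. Write $P = S(U) \cap L_1^+ \cap \cdots \cap L_m^+$ with $L_i \in \cL^{n-1}$; this is possible since $\cL \cap U$ is generated by the hyperplanes $L \cap U$ with $L \in \cL^{n-1}$, and each half-space of $U$ bounded by $L \cap U$ is the trace of a half-space of $\R^{n+1}$ bounded by $L$. Let $U^\pm$ be the two closed half-spaces of $\R^{n+1}$ bounded by $U$. We will set
\[ Q = S(L_1^+ \cap \cdots \cap L_m^+ \cap U^+), \qquad Q' = S(L_1^+ \cap \cdots \cap L_m^+ \cap U^-). \]
Both are weakly convex $(\cL \Cup U)$-polytopes by definition; boundedness is automatic in $S^n$.

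Next we check that they are $n$-dimensional and have $P$ as a facet. Pick $x$ in the relative interior of $P$ inside $S(U)$. As in the last paragraph of the proof of \cref{new_generators}, $x$ cannot lie on any $L_i$ with $L_i \neq U$, since otherwise $x \in L_i \cap U$ would be in $\partial P$. If some $L_i$ equals $U$, we simply discard it from the list, since it imposes no condition on $S(U)$. Thus a small neighbourhood of $x$ in $S^n$ lies in the interior of $\bigcap L_i^+$, and intersecting it with $U^\pm$ gives open subsets of $Q$ and $Q'$. Hence both are $n$-dimensional. Moreover $Q \cap S(U) = Q' \cap S(U) = P$, which is $(n-1)$-dimensional and spans $S(U)$, so $P$ is the facet of each along $U$ by \cref{faces_to_L}.

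For the strong convexity clause we use \cref{strongly-convex-intersection}. If $P$ is strongly convex in $S(U)$, then the hyperplanes $L_i \cap U$ of $U$ defining it intersect in $\{0\}$, so $U \cap L_1 \cap \cdots \cap L_m = \{0\}$. These are exactly the hyperplanes defining $Q$ and $Q'$, possibly with redundant ones, which only shrink the intersection. By \cref{strongly-convex-intersection}, applied to a defining collection, $Q$ and $Q'$ are strongly convex.

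The main point needing care is the interface with \cref{strongly-convex-intersection}, which is phrased in terms of the hyperplanes that define $P$. Presumably this means the minimal facet hyperplanes of \cref{faces_to_L}. Non-facet hyperplanes could make the intersection smaller, so I would first reduce to minimal presentations for both $P$ and $Q$, or alternatively argue directly: $Q$ lies in $\bigcap L_i^+ \cap U^+$, a pointed cone because its lineality space $\bigcap L_i \cap U$ is zero, hence it lies in an open hemisphere. The direct hemisphere argument avoids the ambiguity, so that is the version I would write.
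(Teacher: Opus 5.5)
Your proof is correct and is essentially the paper's argument. The paper subdivides $S^n$ along $U$ and the hyperplanes defining $P$, and takes the two regions adjacent to $P$; these are exactly your $Q=S(\bigcap_i L_i^+\cap U^+)$ and $Q'=S(\bigcap_i L_i^+\cap U^-)$, and the paper likewise gets strong convexity from \cref{strongly-convex-intersection}. Your worry about minimal presentations is harmless but unnecessary, since both directions of the proof of \cref{strongly-convex-intersection} work for any defining collection of half-spaces.
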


	\begin{proof}
		The proof is as in \cref{new_generators_hn} with $A = S^n$, so we only need to subdivide using $U$ and the hyperplanes that define $P$.
		If in addition $P$ is strongly convex in $U$, then any convex polytope $Q$ that has $P$ as a facet must also be strongly convex, using \cref{strongly-convex-intersection}.
	\end{proof}
	
	\begin{lemma}\label{pt_quotient_sph}
		If $\cL$ is generated by hyperplanes and $U \notin \cL$ is a hyperplane, then there is an exact sequence of abelian groups
		\[ 0 \longrightarrow \Pt^{\cL}(S^n) \longrightarrow \Pt^{\cL \Cup U}(S^n) \longrightarrow \Pt^{\cL \cap U}(S(U)) \longrightarrow 0. \]
	\end{lemma}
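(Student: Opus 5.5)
I will follow the proof of \cref{pt_quotient} essentially verbatim, replacing \cref{new_generators} by \cref{new_generators_sn}. Injectivity of $\Pt^{\cL}(S^n) \to \Pt^{\cL \Cup U}(S^n)$ follows from \cref{pt_subgroup}. Both groups embed compatibly into $C(S^n,\Z)$ via indicator functions, so the map is injective.

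Next, fix a normal direction to $U$, giving closed hemispheres $U^\pm$. I will define
\[ \Pt^{\cL \Cup U}(S^n)/\Pt^{\cL}(S^n) \longrightarrow \Pt^{\cL \cap U}(S(U)) \]
by sending a convex $(\cL \Cup U)$-polytope $Q$ to $\pm$ its facet along $S(U)$. The sign is $+$ if $Q \subseteq U^+$ and $-$ if $Q \subseteq U^-$, and the image is $0$ if $Q$ has no such facet. To check well-definedness I would verify two things. First, an $\cL$-polytope has no facet along $U$, since $U \notin \cL$. Second, the three behaviours under cutting by a hyperplane persist: no intersection, transverse subdivision, or cancellation when cutting along $U$ itself. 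Weakly convex polytopes such as $S^n$ itself or lunes pose no problem, since the facet-along-$U$ construction only uses \cref{faces_to_L}. The composite $\Pt^{\cL \Cup U}(S^n) \to \Pt^{\cL \cap U}(S(U))$ is then surjective by \cref{new_generators_sn}: every convex $(n-1)$-dimensional $(\cL \cap U)$-polytope $P$ is a facet of some $Q \subseteq U^+$. It remains to show the kernel is exactly $\Pt^{\cL}(S^n)$, i.e.\ to construct an inverse $[P] \mapsto \pm[Q]$.

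For well-definedness of the inverse, I reduce as before to a single convex $P$ that is a facet of two convex $(\cL \Cup U)$-polytopes $Q, Q'$. Passing to common refinements is fine, since refining $P$ by hyperplanes of $\cL$ lifts to refining $Q$.

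\emph{Same side.} The argument of \cref{pt_quotient} works unchanged: $Q \setminus Q'$ and $Q' \setminus Q$ meet $S(U)$ only inside $\partial P$, so they are $\cL$-polytopes.

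\emph{Opposite sides.} This is the step I expect to be the main obstacle. In the Euclidean case it used boundedness of $Q'$ to show that the hyperplanes other than $U$ cut out bounded regions. In the sphere there is no boundedness issue, which actually simplifies things. I would cut $S^n$ by all hyperplanes of $\cL$ defining $Q$ and $Q'$ other than $U$. This yields finitely many closed convex regions, possibly weakly convex, which are legitimate $\cL$-polytopes by \cref{rem:spherical-case-polytope-group}.

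I then claim each region lies entirely inside or entirely outside $Q \cup Q'$. Suppose some region $R$ met both the interior and exterior of $Q\cup Q'$. Connect an interior point to an exterior point by a path inside the interior of $R$. The path must cross $\partial(Q \cup Q')$ somewhere away from the cutting hyperplanes. The boundary of $Q \cup Q'$ consists of facets of $Q$ and $Q'$, with the common facet $P$ along $U$ removed. The remaining facets of $Q$ and $Q'$ lying on $S(U)$ are contained in $P$ after restricting to one hemisphere each, since $Q \cap S(U) = P = Q' \cap S(U)$. So the path would cross a non-$U$ defining hyperplane, a contradiction. Hence $Q \cup Q'$ is a union of these regions, and $[Q]+[Q'] \in \Pt^{\cL}(S^n)$.

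A small care point is the degenerate case $n=0$ or $n=1$, where faces may be spheres or antipodal pairs. There I would check directly that the subdivision relations and the "facet along $U$" description still make sense, using the convention of allowing the zero subspace.
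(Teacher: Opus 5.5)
Your proposal is correct and is the paper's argument: the paper proves this lemma simply by saying the proof of \cref{pt_quotient} goes through with \cref{new_generators_sn} in place of \cref{new_generators}. Your observation that the opposite-sides case is easier on the sphere is the adaptation the paper leaves implicit: every region of the hyperplane arrangement is automatically a (possibly weakly convex) $\cL$-polytope, so the Euclidean boundedness argument is not needed.
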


	\begin{proof}
		The proof is the same as in \cref{pt_quotient}, using \cref{new_generators_sn}.
	\end{proof}
	
	\begin{theorem}[Solomon-Tits for $\cL$-polytopes in $S^n$]\label{main-sn}
		For all $n \geq 0$, if $\cL$ is admissible  then $\PT^\cL(S^n)$ is equivalent to a wedge of $n$-spheres and the apartment classes of \cref{apartment_pt_sn} induce an isomorphism
		\[ \Pt^\cL(S^n) \overset{\cong}\longrightarrow \widetilde H_{n}(\PT^\cL(S^n)). \]
	\end{theorem}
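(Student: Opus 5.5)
We argue by induction on $n$, following the pattern of \cref{main-en}, with $\PT^\cL(S^n)$ and the apartments of \cref{apartment_pt_sn} in place of $\ST^\cL$.

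\textbf{Base case $n=0$.} The only admissible collection is $\cL=\{\{0\},\R^1\}$. The empty zero subspace does not contribute, so $\PT^\cL(S^0)=S(\R^1)_+ = S^0_+$, a wedge of two $0$-spheres. The two points are the strongly convex polytopes, $\Pt^\cL(S^0)\cong\Z^2$, and the apartment classes are the two point classes, so the map is an isomorphism.

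\textbf{Inductive step, strategy.} Assume $n\geq 1$ and the result for $n-1$. We apply Zorn's lemma to the poset $\mathcal{P}_\cL$ of admissible subcollections $\cL'\subseteq\cL$ for which the theorem holds.

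\textbf{Non-emptiness of $\mathcal{P}_\cL$.} By \cref{admissible-contains-coords}, after a linear coordinate change we may take $\cL_\circ$ to be the $n+1$ coordinate hyperplanes together with their intersections. Then $\cL_\circ$ is the Boolean lattice of coordinate subspaces, and $\hocolim_{V\in\cL_\circ\setminus\{\R^{n+1}\}}S(V)$ is equivalent to $\colim S(V)$, since this is a diagram of cofibrations closed under intersection. That colimit is the union of the coordinate great spheres, i.e.\ the boundary of the octahedral cell structure on $S^n$. The quotient $\PT^{\cL_\circ}(S^n)$ is therefore $S^n$ modulo its $(n-1)$-skeleton, a wedge of $2^{n+1}$ $n$-spheres, one for each orthant simplex. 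Each orthant simplex is a strongly convex $\cL_\circ$-polytope, and its apartment class is the fundamental class of the corresponding sphere. Moreover $\Pt^{\cL_\circ}(S^n)$ is free on these simplices, by \cref{pt_subgroup} and the fact that they are the only chambers.

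\textbf{Adding a hyperplane $U\notin\cL'$.} Write $C(\cL)=\hocolim_{V\in\cL}S(V)$ and $B(\cL)$ for the hocolim over proper subspaces. The pushout decomposition \eqref{eqn:fact-pushout} has a hocolim analogue:
\begin{itemize}
\item $B(\cL')\simeq B(\cL'\uplus U)$, via the same order-increasing retraction $V\mapsto\bigcap_{V\subseteq V'\in\cL'}V'$, now applied to the diagram $S(-)$;
\item $B(\cL'\Cup U)$ is the homotopy pushout of $B(\cL'\uplus U)\leftarrow B(\cL'\cap U)\rightarrow C(\cL'\cap U)$, the last term being contractible onto $S(U)$.
\end{itemize}
Hence the cofibre of $B(\cL')\to B(\cL'\Cup U)$ is $C(\cL'\cap U)/B(\cL'\cap U)=\PT^{\cL'\cap U}(S(U))$. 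Since $C(\cL)\simeq S^n$ for every $\cL$, it is cleaner to compare cofibres: we obtain a cofibre sequence
\[\PT^{\cL'}(S^n)\to\PT^{\cL'\Cup U}(S^n)\to\Sigma\PT^{\cL'\cap U}(S(U)).\]
The collection $\cL'\cap U$ is admissible in $U$, because the intersection of all its hyperplanes is $\{0\}$. By induction $\PT^{\cL'\cap U}(S(U))$ is a wedge of $(n-1)$-spheres, so the third term is a wedge of $n$-spheres. The dimension bound of \cref{pt-subcomplex} forces the connecting map to $\Sigma\PT^{\cL'}$ to vanish on $\widetilde H_{n+1}$. This gives a short exact sequence in $\widetilde H_n$, and both outer terms are $(n-1)$-connected, so the middle term is a wedge of $n$-spheres.

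Pairing this with \cref{pt_quotient_sph} yields a ladder like \eqref{st_en_map_of_les}. By \cref{new_generators_sn}, a strongly convex $P\subseteq S(U)$ lifts to a strongly convex $Q\subseteq U^+$ having $P$ as a facet. Restricting an apartment-like map $Q\to C(\cL'\Cup U)$ (\cref{pt-apt-like}) to the facet $P$ lands in $N(P)\subseteq C(\cL'\cap U)$, which is an apartment-like map in $U$. So the dashed map is the apartment map in $U$, and the five-lemma finishes this step.

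\textbf{Chains.} Unions of chains are handled exactly as in \cref{main-en}, since homology commutes with directed colimits of subcomplexes (\cref{pt-subcomplex}).

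\textbf{Main obstacle.} We expect the main obstacle to be making the pushout and excision identification rigorous for hocolims of the spheres $S(V)$ rather than of points, and checking orientations in the boundary map. We would first verify, at the level of Bousfield--Kan cells, that the square of subcomplexes is a genuine pushout of subcomplexes, as in \cref{pt-subcomplex}.
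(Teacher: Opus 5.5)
Your proposal is correct and follows essentially the same route as the paper. It has the same base case, starts Zorn's lemma from the coordinate hyperplanes of \cref{admissible-contains-coords}, and uses the pushout of Bousfield--Kan subcomplexes, which \cref{pt-subcomplex} makes strict and so resolves your ``main obstacle''. It also uses the ladder with \cref{pt_quotient_sph} and the five-lemma, and identifies the dashed map by restricting an apartment-like map on $Q$ to its facet $P$.

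The only differences are cosmetic. You use the rotated cofibre sequence $\PT^{\cL'}(S^n)\to\PT^{\cL'\Cup U}(S^n)\to\Sigma\PT^{\cL'\cap U}(S(U))$, whereas the paper uses $\PT^{\cL'\cap U}(S(U))\to\PT^{\cL'}(S^n)\to\PT^{\cL'\Cup U}(S^n)$ from the triple $A\subseteq B\subseteq C$ together with a nullhomotopy. Also, $C(\cL'\cap U)$ is equivalent to $S(U)$ rather than contractible; this is harmless, since only its cofibre enters the argument.
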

	
	Note that the statement is slightly different than the versions for Euclidean and hyperbolic geometry (\cref{main-en} and \cref{main-hn}) because there is no wedge of $(n-1)$-spheres that we suspend to form $\PT^\cL(S^n)$.
	
	\begin{proof}
	As in \cref{main-en} and \cref{main-step-hn}, the proof is by induction on $n$. In the initial case $n = 0$, the only possible admissible collection of subspaces is $\cL = \{0,\R^1\}$. The zero subspace in $\cL$ cuts the 0-sphere into its two constituent points, so that $\Pt^\cL(S^n) = \Z^2$. Similarly we have
	\[ \PT^\cL(S^0) = \frac{S(\R^1)}{\varnothing} = S(\R^1)_+ \cong S^0 \vee S^0, \]
	which also has $\Z^2$ as its reduced 0th homology. Each of the two points in the 0-sphere is a strongly convex $\cL$-polytope, and if we let $P$ be one of these points, then the apartment-like map for $P$ is just the inclusion of $P$ into $S^0$. On quotients this induces a map $S^0 \to S^0 \vee S^0$ which includes one of the summands. Therefore it induces an isomorphism $\Z^2 \cong \Z^2$ on reduced 0th homology.
	
	We now assume $n \geq 1$ and that the theorem has been proven for the previous value of $n$. As before, we let $\mathcal{P}_\cL$ denote all admissible collections $\cL' \subseteq \cL$ for which the theorem is true, and follow three steps:
	\begin{itemize}
		\item proving that $\mathcal{P}_\cL$ is non-empty,
		\item adding one hyperplane $U$ to $\cL' \in \mathcal{P}_\cL$, and proving that $\cL' \Cup U \in \mathcal{P}_\cL$, and
		\item taking a colimit of collections $\cup_{\lambda} \cL_\lambda$ with $\cL_\lambda \in \mathcal{P}_\cL$ and proving that $\cup_{\lambda} \cL_\lambda \in \mathcal{P}_\cL$.
	\end{itemize}
	
	For the first step, we let $\cL_\circ \subseteq \cL$ be the collection generated by the coordinate hyperplanes from \cref{admissible-contains-coords}. Without loss of generality they are actually the standard coordinate hyperplanes. They cut $S^n$ into $2^{n+1}$ simplices which we label $P_1$ through $P_{2^{n+1}}$, from which we get that $\Pt^{\cL_\circ}(S^n)$ is free on $2^{n+1}$ generators. Similarly we can form an equivalence
	\begin{equation}\label{eq:pt-collapse}
		\PT^{\cL}(S^n) = \frac{ \underset{V \in \cL}\hocolim\, S(V) }{ \underset{V \in \cL \setminus \{\R^{n+1}\}}\hocolim\, S(V) } \overset{\simeq}\longrightarrow \frac{ S(\R^{n+1}) }{ \bigcup_{V \in \cL_\circ \setminus \{\R^{n+1}\}} S(V) } \cong \bigvee_{i=1}^{2^{n+1}} P_i/\partial P_i,
	\end{equation}
	where the map in the middle is the canonical collapse map from the homotopy colimit to the strict colimit of the diagram, which is an equivalence because the diagram consists of a cell complex and a selection of subcomplexes that are closed under intersection. We are left with a sphere modulo all of the coordinate hyperplanes, which is homeomorphic to the wedge of all of the regions $P_i$ modulo their boundaries.
	
	It takes some effort to directly write down an apartment-like map for $P_i$: one has to take iterated mapping cylinder neighborhoods of each of the faces and use the cylinder coordinates to move along the homotopy colimit. It is much faster and easier to instead follow the strategy of \cref{st-apt-like}. We make a new definition of ``apartment-like'' for the pair $(S(\R^{n+1}),\bigcup_{V \in \cL_\circ \setminus \{\R^{n+1}\}} S(V))$, and we wait until we have mapped forward to this pair before we write down the apartment-like map explicitly.
	
	For each face $F$ of each of the polytopes $P_i$, we note that $F$ is a subset of $S^n$, and we say that a map $P_i \to S^n$ is apartment-like if its image is contained in $F \subseteq S^n$. Since each of these faces $F$ is contractible, the proof of \cref{apt-like-preserved-3} applies and shows that this space of apartment-like maps is weakly contractible. Furthermore any apartment-like map from $P_i$ to $\underset{V \in \cL}\hocolim\, S(V)$ by definition collapses down to an apartment-like map $P_i \to S^n$.
	
	Therefore, when evaluating the composite in \eqref{eq:pt-collapse} on the apartment class for $P_i$, we are allowed to go forward to the second quotient space and then choose the apartment-like map $P_i \to S^n$ directly. We choose it to be the identity of $P_i$, in other words the given inclusion $P_i \subseteq S^n$. Then after mapping to the last term of \eqref{eq:pt-collapse}, we get the identity map $P_i/\partial P_i = P_i/\partial P_i$. This verifies that the apartment classes define an isomorphism for $\cL_\circ$,
		\[ \begin{tikzcd} \Z^{2^{n+1}} \cong \Pt^{\cL_\circ}(S^n) \rar{\apt}[swap]{\cong} & \widetilde H_n(\PT^{\cL_\circ}(S^n)) \cong \Z^{2^{n+1}}. \end{tikzcd} \]
	
	This finishes the first step of the proof. For the second key inductive step, we assume the theorem is true for $\cL'$ and that $U \not\in \cL'$, and we form the diagram
	\begin{equation*}
	\begin{tikzcd}
		\underset{V \in \cL'}\hocolim\, S(V) \ar{rr}{\simeq} &&
		\underset{V \in (\cL' \Cup U)}\hocolim\, S(V) \\[-5pt]
		\underset{V \in \cL' \setminus \{\R^{n+1}\}}\hocolim\, S(V) \uar \rar{\simeq} &
		\underset{V \in (\cL' \Cup U) \setminus \{U,\R^{n+1}\}}\hocolim\, S(V) \rar &
		\underset{V \in (\cL' \Cup U) \setminus \{\R^{n+1}\}}\hocolim\, S(V) \uar \\[-5pt]
		&
		\underset{V \in (\cL' \cap U) \setminus \{U\}}\hocolim\, S(V) \rar \uar &
		\underset{V \in (\cL' \cap U)}\hocolim\, S(V). \uar
	\end{tikzcd}
	\end{equation*}
	To make the following argument less dense and more readable, we adopt the shorthand
	\[ h(\cL') \coloneq \hocolim_{V \in \cL'}\, S(V), \]
	so that the above diagram becomes the following illustration, which also shows which regions of the sphere contribute to each homotopy colimit.
	
	\begin{equation}\label{eqn:fact-pushout-sn}
	\centerline{
	\def\svgwidth{4.5in}
\begingroup%
  \makeatletter%
  \providecommand\color[2][]{%
    \errmessage{(Inkscape) Color is used for the text in Inkscape, but the package 'color.sty' is not loaded}%
    \renewcommand\color[2][]{}%
  }%
  \providecommand\transparent[1]{%
    \errmessage{(Inkscape) Transparency is used (non-zero) for the text in Inkscape, but the package 'transparent.sty' is not loaded}%
    \renewcommand\transparent[1]{}%
  }%
  \providecommand\rotatebox[2]{#2}%
  \newcommand*\fsize{\dimexpr\f@size pt\relax}%
  \newcommand*\lineheight[1]{\fontsize{\fsize}{#1\fsize}\selectfont}%
  \ifx\svgwidth\undefined%
    \setlength{\unitlength}{320.84617396bp}%
    \ifx\svgscale\undefined%
      \relax%
    \else%
      \setlength{\unitlength}{\unitlength * \real{\svgscale}}%
    \fi%
  \else%
    \setlength{\unitlength}{\svgwidth}%
  \fi%
  \global\let\svgwidth\undefined%
  \global\let\svgscale\undefined%
  \makeatother%
  \begin{picture}(1,1.0232106)%
    \lineheight{1}%
    \setlength\tabcolsep{0pt}%
    \put(0,0){\includegraphics[width=\unitlength,page=1]{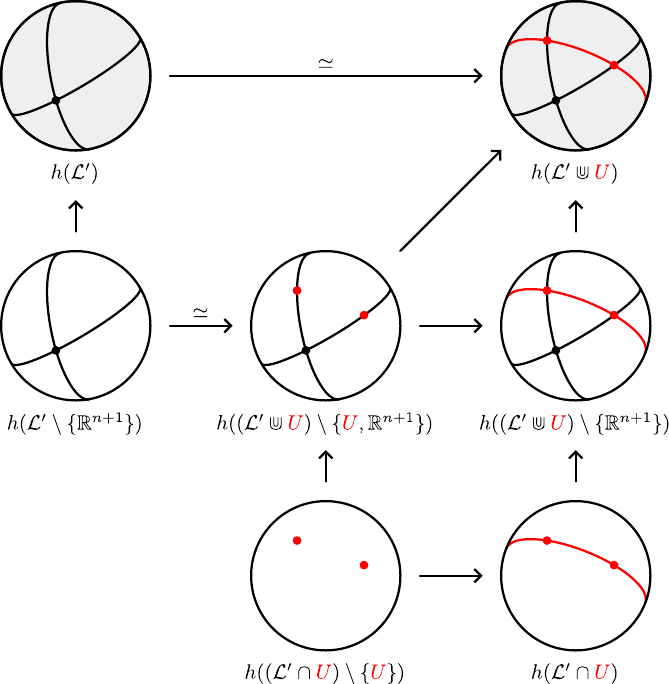}}%
  \end{picture}%
\endgroup%

	}
	\end{equation}

	The maps marked $\simeq$ are equivalences because the inclusions
	\[ \cL' \to (\cL' \Cup U), \qquad \cL' \setminus \{\R^{n+1}\} \to (\cL' \Cup U) \setminus \{U,\R^{n+1}\} \]
	are homotopy terminal (i.e.\ homotopy colimit preserving). In each case, this means that for each space $V$ in the larger category, the poset of all $V'$ in the smaller category that contain $V$ form a contractible poset---indeed, it has an initial object, given by intersecting all such $V'$ together. (This is also how we got the deformation retract in the proof of \cref{main-en}.)
	
	The remaining square of \eqref{eqn:fact-pushout-sn} is a pushout because, as before, a space in $(\cL' \Cup U) \setminus \{\R^{n+1}\}$ either has to be a subspace of $U$, or an element of $(\cL' \Cup U) \setminus \{U,\R^{n+1}\}$, or both, which happens precisely when it is an element of $\cL' \cap U \setminus \{U\}$. Note that by \cref{pt-subcomplex}, all of these spaces are subcomplexes of the homotopy colimit $h((\cL' \Cup U) \setminus \{\R^{n+1}\})$, so the pushout is a homotopy pushout.
	
	Now we take the triangle in \eqref{eqn:fact-pushout-sn}, write it as nested subspaces $A \subseteq B \subseteq C$, and form the corresponding cofiber sequence of the form $B/A \to C/A \to C/B$. By the above equivalences and homotopy pushout, this cofiber sequence can be rewritten as
	\begin{equation}\label{eqn:cofiber-seq-sn}
	\begin{tikzcd}[row sep = 1em]
		\frac{ \displaystyle h(\cL' \cap U) }{ \displaystyle h((\cL' \cap U) \setminus \{U\}) } \rar &
		\frac{ \displaystyle h(\cL') }{ \displaystyle h(\cL' \setminus \{\R^{n+1}\}) } \rar &
		\frac{ \displaystyle h(\cL' \Cup U) }{ \displaystyle h((\cL' \Cup U) \setminus \{\R^{n+1}\}) } \\
		\PT^{\cL' \cap U}(S(U)) \uar[equal] \rar &
		\PT^{\cL'}(S^n) \uar[equal] \rar  &
		\PT^{\cL' \Cup U}(S^n). \uar[equal]
	\end{tikzcd}
	\end{equation}
	By inductive hypothesis the first term is a wedge of $(n-1)$-spheres, and by assumption the second term is a wedge of $n$-spheres. Therefore the map between them is nullhomotopic, so the cofiber is a wedge of $n$-spheres, proving the first half of the theorem for $\cL' \Cup U$.
	
	We then form the map of exact sequences
	\begin{equation}\label{st_sn_map_of_les}
	\begin{tikzcd}[column sep = 1.5em] 0 \rar & \Pt^{\cL'}(S^n) \rar \dar{\apt}[swap]{\cong} & \Pt^{\cL' \Cup U}(S^n) \dar{\apt} \rar & \Pt^{\cL' \cap U}(S(U)) \dar[dashed] \rar & 0 \\[-5pt]
		0 \rar & \widetilde{H}_n(\PT^{\cL'}(S^n)) \rar &  \widetilde{H}_n(\PT^{\cL' \Cup U}(S^n)) \rar & \widetilde{H}_{n-1}(\PT^{\cL' \cap U}(S(U))) \rar & 0.\end{tikzcd}
	\end{equation}
	 As before, we need to show that the dashed map is given by the apartment classes in $U$. We need to redo the argument in this case, since we are working with $\PT$ rather than $\ST$.

	We think of apartment maps as maps of pairs $(Q,\partial Q) \to (h(\cL),h(\cL \setminus \{\R^{n+1}\}))$. The connecting homomorphism for the cofiber sequence \eqref{eqn:cofiber-seq-sn} becomes
	\begin{equation}\label{snake_map_expanded_sn}
		\begin{aligned}
			& \ H_n(h(\cL' \Cup U),h((\cL' \Cup U) \setminus \{\R^{n+1}\})) \\
			\overset\partial\longrightarrow & \ H_{n-1}(h((\cL' \Cup U) \setminus \{\R^{n+1}\}),\varnothing) \\
			\longrightarrow & \ H_{n-1}(h((\cL' \Cup U) \setminus \{\R^{n+1}\}),h((\cL' \Cup U) \setminus \{U,\R^{n+1}\})) \\
			\cong & \ H_{n-1}(h(\cL' \cap U),h((\cL' \cap U) \setminus \{U\})),
		\end{aligned}
	\end{equation}
	where the first map takes a relative chain in $H_n(C,B)$ to its boundary in $H_{n-1}(B)$, the second map projects this to a relative chain in $H_{n-1}(B,A)$, and the last map uses excision along the pushout square in \eqref{eqn:fact-pushout-sn} to rewrite the result.
	
	Now we compute the dashed map of \eqref{st_sn_map_of_les} on an element $[P] \in \Pt^{\cL' \cap U}(S(U))$ where $P$ is strongly convex. It lifts to $[Q] \in \Pt^{\cL' \Cup U}(S^n)$, where $Q \subseteq U_+$ is a strongly convex $\cL' \Cup U$-polytope with $P$ as a facet by \cref{new_generators_sn}. We then take the apartment class for $Q$, whose boundary fits into the following commuting diagram of pairs:
	\[\begin{tikzcd}
		(\partial Q, \varnothing) \dar \rar{\apt} & (h((\cL' \Cup U) \setminus \{\R^{n+1}\}),\varnothing) \dar \\[-5pt]
		(\partial Q, \partial Q \setminus \textup{Int }P) \rar{\apt} & (h((\cL' \Cup U) \setminus \{\R^{n+1}\}),h((\cL' \Cup U) \setminus \{U,\R^{n+1}\})) \\[-5pt]
		(P,\partial P) \uar{\cong_{H_*}} \rar{\apt} & (h((\cL' \cap U)),h((\cL' \cap U) \setminus \{U\})). \uar[swap]{\cong_{H_*}}
	\end{tikzcd}\]
	where the maps marked $\cong_{H_*}$ induce isomorphisms on homology. We conclude that the apartment class of $Q$ under \eqref{snake_map_expanded_sn} goes to the apartment class of $P$, as desired.
	 
	 Since the dashed map of \eqref{st_sn_map_of_les} is given by apartment classes, it is an isomorphism by inductive hypothesis. Therefore the middle vertical map is an isomorphism, and we conclude that $\cL' \Cup U \in \mathcal{P}_\cL$. This finishes the second step of the proof.
	
	The proof of the third step (the one that takes the colimit $\cup_\lambda \cL_\lambda$) is identical to the proof in \cref{main-en}, except that we take the colimit of the complexes $\PT^{\cL_\lambda}(S^n)$, and the result is an $(n-1)$-connected cell complex that is $n$-dimensional by \cref{pt-subcomplex}, and is therefore a wedge of $n$-spheres.
	\end{proof}

	\subsection{The non-admissible case}
	
	If $\cL$ is not admissible, then the intersection of all the hyperplanes in $\cL$ is some non-trivial subspace $U \subseteq \R^{n+1}$. We assume throughout this section that $\cL^{n-1}$ is non-empty, so that $U$ is a proper subset of $\R^{n+1}$ and hence $\dim U^\perp > 0$. It follows that each hyperplane in $\cL$ is of the form $U \oplus H$ for a unique hyperplane $H \subseteq U^\perp$. We therefore get a smaller system of hyperplanes inside the vector space $U^\perp$, denoted $\cL \cap U^\perp$. As the intersection of all the hyperplanes in $\cL \cap U^\perp$ is $U \cap U^\perp = \{0\}$, we get:
	
	\begin{lemma}
		$\cL \cap U^\perp$ is admissible.
	\end{lemma}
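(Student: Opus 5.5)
We check the two conditions of \cref{admissible-sn} for the collection $\cL \cap U^\perp$ of subspaces of the vector space $U^\perp$. Throughout, $\cL \cap U^\perp$ consists of the subspaces $L \cap U^\perp$ for $L \in \cL$. Its hyperplanes, relative to $U^\perp$, are the $H = L \cap U^\perp$ for hyperplanes $L \in \cL^{n-1}$.

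The first step is the decomposition already observed before the statement. Every hyperplane $L \in \cL^{n-1}$ contains $U = \bigcap \cL^{n-1}$. Since $U \subseteq L$, the modular law gives $L = U \oplus (L \cap U^\perp)$. Comparing dimensions, $L \cap U^\perp$ has codimension $1$ in $U^\perp$; here we use $\dim U^\perp > 0$, which follows from $\cL^{n-1} \neq \varnothing$. So $H = L \cap U^\perp$ is a hyperplane of $U^\perp$, and the assignment $L \mapsto L\cap U^\perp$ is a bijection onto its image, with inverse $H \mapsto U \oplus H$.

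For condition (i), take any element $V \in \cL$, which is an intersection $L_1 \cap \cdots \cap L_k$ of hyperplanes in $\cL^{n-1}$. Then
\[ V \cap U^\perp = (L_1 \cap U^\perp) \cap \cdots \cap (L_k \cap U^\perp) \]
is an intersection of hyperplanes of $\cL \cap U^\perp$. Conversely, any such intersection arises in this way. The empty intersection corresponds to $\R^{n+1} \cap U^\perp = U^\perp$. Hence $\cL \cap U^\perp$ is generated by its hyperplanes.

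For condition (ii), the intersection of all the hyperplanes of $\cL \cap U^\perp$ is
\[ \bigcap_{L \in \cL^{n-1}} (L \cap U^\perp) = \Big(\bigcap_{L\in\cL^{n-1}} L\Big) \cap U^\perp = U \cap U^\perp = \{0\}. \]
This is exactly the admissibility condition.

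There is no real obstacle in this argument. The only point requiring care is the convention in \cref{admissible-sn} that collections are taken as linear subspaces of a vector space, here $U^\perp \cong \R^{m+1}$ with $m = \dim U^\perp - 1 \geq 0$. We should also check that the zero subspace is handled consistently. It appears in $\cL \cap U^\perp$ as the intersection of all the hyperplanes, and \cref{admissible-sn} explicitly permits this.
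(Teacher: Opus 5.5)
Your proof is correct and follows the paper's argument. The paper uses the same decomposition $L = U \oplus H$ of each hyperplane in $\cL$ and the same computation $\bigcap_{L}(L \cap U^\perp) = U \cap U^\perp = \{0\}$; it leaves generation by hyperplanes implicit, which you verify explicitly.
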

		
	\vspace{1em}
	\centerline{
	\def\svgwidth{4.5in}
\begingroup%
  \makeatletter%
  \providecommand\color[2][]{%
    \errmessage{(Inkscape) Color is used for the text in Inkscape, but the package 'color.sty' is not loaded}%
    \renewcommand\color[2][]{}%
  }%
  \providecommand\transparent[1]{%
    \errmessage{(Inkscape) Transparency is used (non-zero) for the text in Inkscape, but the package 'transparent.sty' is not loaded}%
    \renewcommand\transparent[1]{}%
  }%
  \providecommand\rotatebox[2]{#2}%
  \newcommand*\fsize{\dimexpr\f@size pt\relax}%
  \newcommand*\lineheight[1]{\fontsize{\fsize}{#1\fsize}\selectfont}%
  \ifx\svgwidth\undefined%
    \setlength{\unitlength}{329.33156938bp}%
    \ifx\svgscale\undefined%
      \relax%
    \else%
      \setlength{\unitlength}{\unitlength * \real{\svgscale}}%
    \fi%
  \else%
    \setlength{\unitlength}{\svgwidth}%
  \fi%
  \global\let\svgwidth\undefined%
  \global\let\svgscale\undefined%
  \makeatother%
  \begin{picture}(1,0.35672467)%
    \lineheight{1}%
    \setlength\tabcolsep{0pt}%
    \put(0,0){\includegraphics[width=\unitlength,page=1]{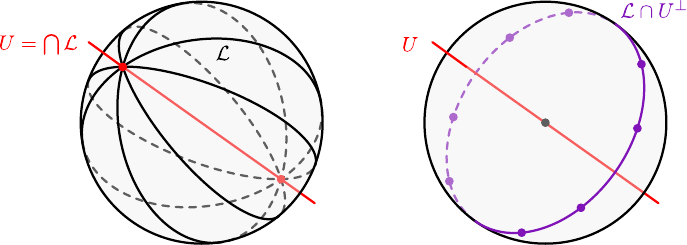}}%
  \end{picture}%
\endgroup%

	}
	\vspace{1em}
	
	\begin{lemma}\label{pt-suspended-0}
		In this case, intersecting each $\cL$-polytope with $S(U^\perp)$ defines an isomorphism 
		\[ \Pt^\cL(S^n) \cong \Pt^{\cL \cap U^\perp}(S(U^\perp)). \]
	\end{lemma}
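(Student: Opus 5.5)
The plan is to compare both sides with the ``join'' structure: every $\cL$-polytope is a cone in the $U$ direction. Concretely, since each hyperplane of $\cL$ has the form $U \oplus H$ with $H \subseteq U^\perp$, each half-space bounded by a hyperplane of $\cL$ is $U \oplus H^{\pm}$. Hence a convex $\cL$-polytope $P = S(\bigcap_j (U \oplus H_j^{\pm}))$ is $S(U \oplus C)$, where $C = \bigcap_j H_j^{\pm}$ is a polyhedral cone in $U^\perp$. Its intersection with $S(U^\perp)$ is $S(C)$, which is a convex $(\cL \cap U^\perp)$-polytope in $S(U^\perp)$. This polytope is $\dim U^\perp - 1$ dimensional precisely when $P$ is $n$-dimensional, since $\dim(U \oplus C) = \dim U + \dim C$. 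Conversely, every convex $(\cL\cap U^\perp)$-polytope $S(C)$ arises this way from $S(U \oplus C)$. So on convex generators, intersecting with $S(U^\perp)$ and ``joining with $S(U)$'' are mutually inverse bijections, $P \mapsto P \cap S(U^\perp)$ and $R \mapsto S(U \oplus \R_{\ge 0}R)$.

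First I will check that intersection with $S(U^\perp)$ respects the subdivision relation of \cref{def:polytope-group}. Since all cutting hyperplanes contain $U$, we may take subdivisions cut along hyperplanes $U\oplus H$, and these correspond exactly to subdivisions of the slice by $H$. If $P$ is subdivided into $P_i$ with disjoint interiors, then the slices $P_i \cap S(U^\perp)$ cover $P\cap S(U^\perp)$. Their relative interiors are disjoint because $\mathrm{Int}(P_i) = S(U \oplus \mathrm{relint}\, C_i)$. Thus the map $\Pt^\cL(S^n) \to \Pt^{\cL\cap U^\perp}(S(U^\perp))$ is well defined. The same reasoning in reverse, with joins of subdivisions being subdivisions, shows that the join map is well defined. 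Since the two maps are inverse on convex generators, which generate both groups, they are inverse isomorphisms.

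The step needing most care is the well-definedness on non-convex polytopes and general subdivisions. Here I would use the remark that $\Pt$ can be presented using only convex polytopes, together with the trick in \cref{pt_subgroup} of refining everything along the finitely many relevant hyperplanes, so that only ``cut along one hyperplane of $\cL$'' relations need checking. A cleaner alternative is to use the injection of \cref{pt_subgroup} into $C(S^n,\Z)$. Indicator functions of $\cL$-polytopes are $U$-invariant, meaning constant along the join fibres, and restriction to $S(U^\perp)$ sends finite unions of hyperplanes $U\oplus H$ to finite unions of hyperplanes $H$. So restriction gives a homomorphism compatible with the indicator embedding, and injectivity follows from the embedding.

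Two degenerate cases need a brief check. When $\dim U^\perp = 1$, $S(U^\perp)=S^0$ and the admissible collection is $\{0,\R^1\}$, giving $\Z^2$ on both sides; this matches the two half-spheres cut out by the single hyperplane $U$. Throughout, convexity in $S^n$ is meant in the weak sense of \cref{rem:spherical-case-polytope-group}.
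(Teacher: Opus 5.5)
Your proposal is correct and follows the paper's argument. Every convex $\cL$-polytope is cut out by half-spaces $U \oplus H^{\pm}$, so intersecting with $S(U^\perp)$ is inverted by adding $U$ back in (joining with $S(U)$), and you supply the dimension and interior checks for subdivisions that the paper's two-line proof leaves implicit. One caution on your optional indicator-function variant: $S(U^\perp)$ itself lies in a hyperplane of $S^n$, so restriction is not well defined on all of $C(S^n,\Z)$ and you would need $U$-invariant representatives, but your direct argument does not need this.
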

	
	\begin{proof}
		Every convex $\cL$-polytope is an intersection of half-spaces of the form $U \oplus V^+$, where $V^+$ is a half-space in $U^\perp$. Therefore intersecting with $U$ is invertible, with inverse given by adding $U$ back in.
	\end{proof}

	Geometrically, the inverse of this isomorphism takes the \emph{join} of each polytope $P \subseteq S(U^\perp)$ with the sphere $S(U)$. It is convenient to use the following model, homeomorphic to the usual join construction:
	
	\begin{definition}\label{df:join}
		Given subsets $K \subseteq S(U)$ and $L \subseteq S(U^\perp)$, their \emph{join} is the subset $K * L \subseteq S^n$ formed as the image of the map
		\begin{align*}
			K \times I \times L &\longrightarrow S^n = (\R^{n+1} \setminus \{0\})/\R_+ \\
				(k,t,\ell) &\mapsto (1-t)k + t\ell.
		 \end{align*}
	\end{definition}
	
	For example, when $P$ is a polytope in $S(U^\perp)$, the join $S(U) * P$ is a polytope in $S^n$. Every $\cL$-polytope is of this form, because it is obtained by adding to $P$ all possible vectors in $U$ and rescaling to land on the sphere, which lines up with \cref{df:join}.
	
	\vspace{1em}
	\centerline{
	\def\svgwidth{1.8in}
\begingroup%
  \makeatletter%
  \providecommand\color[2][]{%
    \errmessage{(Inkscape) Color is used for the text in Inkscape, but the package 'color.sty' is not loaded}%
    \renewcommand\color[2][]{}%
  }%
  \providecommand\transparent[1]{%
    \errmessage{(Inkscape) Transparency is used (non-zero) for the text in Inkscape, but the package 'transparent.sty' is not loaded}%
    \renewcommand\transparent[1]{}%
  }%
  \providecommand\rotatebox[2]{#2}%
  \newcommand*\fsize{\dimexpr\f@size pt\relax}%
  \newcommand*\lineheight[1]{\fontsize{\fsize}{#1\fsize}\selectfont}%
  \ifx\svgwidth\undefined%
    \setlength{\unitlength}{138.38223864bp}%
    \ifx\svgscale\undefined%
      \relax%
    \else%
      \setlength{\unitlength}{\unitlength * \real{\svgscale}}%
    \fi%
  \else%
    \setlength{\unitlength}{\svgwidth}%
  \fi%
  \global\let\svgwidth\undefined%
  \global\let\svgscale\undefined%
  \makeatother%
  \begin{picture}(1,0.85788989)%
    \lineheight{1}%
    \setlength\tabcolsep{0pt}%
    \put(0,0){\includegraphics[width=\unitlength,page=1]{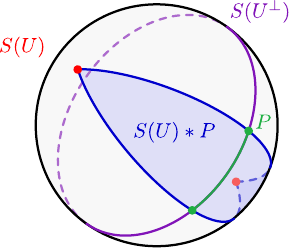}}%
  \end{picture}%
\endgroup%

	}
	\vspace{1em}
	
	Let $S^U \coloneq D(U)/S(U)$, homeomorphic to the one-point compactification of $U$. The following is a direct consequence of the usual join construction:

	\begin{lemma}\label{quotient-of-joins}
		If $\dim U^\perp > 0$, then for any pair of subspaces $A \subseteq A'$, there is a homeomorphism
		\[  \frac{ S(U) * A' }{ S(U) * A } \cong S^U \sma (A'/A) = \Sigma^U(A'/A) \]
	    natural in inclusions of pairs.
	\end{lemma}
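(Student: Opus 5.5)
We reduce to the classical join formula: for compact Hausdorff $X,Y$ there is a natural homeomorphism $X * Y \cong$ the quotient of $X \times I \times Y$ by the usual identifications. The first step is to check that the map of \cref{df:join}, $(k,t,\ell) \mapsto (1-t)k + t\ell$, induces exactly this quotient. We take $k \in S(U)$ and $\ell \in S(U^\perp)$ to be unit representatives. Since $U \perp U^\perp$, the vector $(1-t)k + t\ell$ is nonzero for every $t \in I$. Its image in $S^n$ determines the normalised pair of components, namely $(1-t)k$ and $t\ell$ up to a common positive scalar. From these we recover $t$ (via the ratio of norms), and we recover $k$ when $t\neq 1$ and $\ell$ when $t \neq 0$. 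So the map is a continuous bijection from the classical join $S(U) * S(U^\perp)$ onto $S^n$. For compact $A'$ it restricts to a continuous bijection from the abstract join onto $S(U) * A'$, and compactness plus Hausdorffness make it a homeomorphism. We assume throughout that $A \subseteq A'$ are closed (compact) subspaces of $S(U^\perp)$, as is the case for polytopes and their boundaries.

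Next we pass to the quotient. For the abstract join we use the standard identification $K * L \cong (CK \times L) \cup (K \times CL)$, and we examine $(K * A')/(K * A)$ with $K = S(U)$. A point of $K * A'$ not in $K * A$ has $\ell \in A' \setminus A$ and $t > 0$. All points with $t = 0$ lie in $K \subseteq K * A$ (we assume $A \ne \varnothing$; the case $A = \varnothing$ is treated below). Hence the quotient is obtained from $K \times I \times A'$ by collapsing $K\times\{0\}\times A' \cup K \times I \times A$ to a point and collapsing $K$ at $t=1$. The result is $\bigl((K\times I)/(K\times 0)\bigr)/\!\sim \;\wedge\; (A'/A)$, where the $t=1$ end is collapsed along $K$. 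This gives $(CK/K) \wedge (A'/A)$ with $CK = K \times I/K\times 1$ the cone. Finally $CK/K = D(U)/S(U) = S^U$, via the radial homeomorphism sending $(k,t) \mapsto (1-t)k \in D(U)$. To make the smash-product identification rigorous, we use that a quotient of a product of compact Hausdorff spaces by a closed subspace is computed levelwise, so the smash of compact Hausdorff quotients agrees with the quotient of the product.

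For the case $A = \varnothing$, $K * \varnothing$ is $K$ itself, and $A'/\varnothing = A'_+$. The same computation gives $(K*A')/K \cong S^U \wedge A'_+$, which is consistent with the convention $\Sigma^U(A'_+)$. Naturality in inclusions of pairs is immediate, because every map above is defined pointwise by the formula $(k,t,\ell)\mapsto (1-t)k+t\ell$, and this formula commutes with inclusions $A'\subseteq A''$. The main obstacle is only the point-set bookkeeping in the first step: checking that the formula of \cref{df:join} realises the quotient topology of the join rather than merely a bijection. We handle this with the compact-to-Hausdorff argument, using $\dim U^\perp>0$ only to ensure that $S(U^\perp)$ is non-empty so the statement is non-vacuous.
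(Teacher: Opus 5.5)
Your proof is correct and takes the same route as the paper, which only records the lemma as a direct consequence of the usual join construction. You identify the model of \cref{df:join} (with unit representatives) with the abstract join and compute the quotient as $(CS(U)/S(U)) \sma (A'/A) \cong S^U \sma (A'/A)$; your convention $S(U) * \varnothing = S(U)$, rather than the empty image that \cref{df:join} literally gives, is the one under which the lemma holds, and it is the one \cref{pt-suspended} uses. Your compactness assumption is also appropriate rather than a loss: for non-compact $A' \subseteq S(U^\perp)$ the topology coming from $S^n$ can be strictly coarser at the basepoint than that of $S^U \sma (A'/A)$, whereas for abstract joins of arbitrary spaces, such as the homotopy colimits in \cref{pt-suspended}, your quotient computation goes through using only the compactness of $S(U)$.
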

	
	\begin{proposition}\label{pt-suspended}
		If $U = \bigcap \cL$ and $\dim U^\perp > 0$, then there is a homeomorphism
		\[ \PT^{\cL}(S^n) \cong \Sigma^U \PT^{\cL \cap U^\perp}(S(U^\perp)). \]
	\end{proposition}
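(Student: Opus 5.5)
The plan is to show that the homotopy colimit diagram defining $\PT^\cL(S^n)$ is, term by term, the join with $S(U)$ of the diagram defining $\PT^{\cL \cap U^\perp}(S(U^\perp))$. Then I will push the join through the Bousfield--Kan construction and apply \cref{quotient-of-joins}.

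First I would set up the order isomorphism of posets. Every subspace $V \in \cL$ is an intersection of hyperplanes, each of the form $U \oplus H$ with $H \subseteq U^\perp$, so $V = U \oplus (V \cap U^\perp)$. Hence $V \mapsto V \cap U^\perp$ is an isomorphism of posets $\cL \cong \cL \cap U^\perp$, with inverse $W \mapsto U \oplus W$. It sends $\R^{n+1}$ to $U^\perp$ and sends $U$ itself to $\{0\}$. This last case is exactly why \cref{admissible-sn} allows the zero subspace: $S(\{0\}) = \varnothing$, and $S(U) * \varnothing = S(U)$. Next, for each $W$ I check that $S(U \oplus W) = S(U) * S(W)$, using the model of \cref{df:join}. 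Indeed, any nonzero $u + w$ is a positive multiple of $(1-t)k + t\ell$ with $k \in S(U)$, $\ell \in S(W)$, with the obvious degenerate conventions when $u=0$ or $w=0$. This identification is natural in inclusions $W \subseteq W'$.

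Second, I commute the join with the homotopy colimit. In the Bousfield--Kan model, $\hocolim_{V \in \cL} S(V)$ is the union over flags $V_0 \subsetneq \cdots \subsetneq V_p$ of $\Delta^p \times S(V_0)$, glued along faces. Thus $\hocolim_{V} S(U) * S(V \cap U^\perp)$ maps to $S(U) * \hocolim_{W} S(W)$ by sending $(s, (1-t)k + t\ell)$ to $(1-t)k + t(s,\ell)$. This is not a homeomorphism on the nose, because the $\Delta^p$ coordinate is collapsed on the copy of $S(U)$ where $t=0$. I would therefore avoid claiming a homeomorphism of the total spaces. Instead I would pass to the quotients, or else note that the map is a quotient map that collapses $\Delta^p \times S(U)$ to $S(U)$. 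Concretely, in both numerator and denominator the $S(U)$ part is hit by the contractible-in-$\Delta$ directions. After forming the quotient
\[ \frac{\hocolim_{V \in \cL} S(V)}{\hocolim_{V \neq \R^{n+1}} S(V)}, \]
the $t=0$ locus lies in the denominator whenever $\cL$ has a proper element. When $\cL$ has no proper element the statement is trivial, since then $U = \R^{n+1}$, which is excluded.

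The cleaner route, which I would actually take, is to observe that the quotient $\PT$ only sees the cells $\Delta^p \times S(V_0)$ whose flag ends at $\R^{n+1}$. Modulo the denominator, the quotient is the union over flags ending at $\R^{n+1}$ of $\Delta^p \times S(V_0)$ modulo the faces not containing the top vertex. I would show that this pair is identified with $S(U) * (\text{corresponding pair in } U^\perp)$ relative to $S(U) * (\text{sub-pair})$, cell by cell and naturally. Then \cref{quotient-of-joins}, applied with $A'$ and $A$ the numerator and denominator built on $S(U^\perp)$, gives
\[ \frac{S(U) * A'}{S(U) * A} \cong \Sigma^U (A'/A). \]
Here one can take $A' = \hocolim_{W} S(W) \times$ (appropriately) so that the join is formed fibrewise over simplices.

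The main obstacle is this point-set step of making the join commute with the Bousfield--Kan realisation, since $\Delta^p \times (K * L)$ is not homeomorphic to $K * (\Delta^p \times L)$. I would handle it by proving that the natural comparison map is a homeomorphism after collapsing the denominators. The key observation is that the failure locus $\Delta^p \times S(U)$ always lies in the subcomplex being collapsed, because it factors through the flag element $U$ or through smaller faces. If that fails, I would weaken "homeomorphism" to a natural equivalence, which suffices for all later uses.
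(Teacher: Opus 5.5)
\textbf{There is a genuine gap: the homeomorphism route rests on a false claim, and the equivalence fallback is left unproved.}

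Your main route fails at the ``key observation''. Your comparison map $\hocolim_{V\in\cL} S(U)*S(V\cap U^\perp)\to S(U)*\hocolim_W S(W)$ collapses the simplex coordinate on the loci $\Delta^p\times S(U)\subseteq\Delta^p\times S(V_0)$. These loci are \emph{not} contained in the denominator. The denominator $\hocolim_{V\in\cL\setminus\{\R^{n+1}\}}S(V)$ consists only of cells indexed by flags that do not contain $\R^{n+1}$. Such flags $U\subsetneq V_1\subsetneq\cdots\subsetneq\R^{n+1}$ exist, because $U\in\cL$ is proper. For any of them, the interior of the whole cell $\Delta^p\times S(U)$ survives in the quotient. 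That the flag passes through $U$ does not help.

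Your cell-by-cell matching fails for the same reason. The cell $\Delta^1\times S(U)$ of the flag $U\subsetneq\R^{n+1}$ corresponds on the $U^\perp$ side to $\Delta^1\times S(\{0\})=\varnothing$.

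In fact no argument can give a homeomorphism for the Bousfield--Kan model. Take $n=1$ and $\cL=\{L,\R^2\}$ for a line $L$, so that $U=L$. Then $\PT^\cL(S^1)$ is the mapping cylinder of $S(L)\to S^1$ with its free end $S(L)$ collapsed: a circle with two arcs from $\pm a$ to the basepoint, i.e.\ a theta graph. On the other side, $\Sigma^U\PT^{\cL\cap U^\perp}(S(U^\perp))=S^U\sma S(U^\perp)_+\cong S^1\vee S^1$ is a figure eight. These spaces are homotopy equivalent but not homeomorphic, since only the figure eight has a point of valence $4$. Correspondingly, the paper's own proof only produces an equivalence (``Hence, we get an equivalence''), and the statement should be read that way.

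So your fallback is the right target, but you give no argument for it, and that argument is the real content of the proof. Here is the missing step, as in the paper:
\begin{enumerate}
\item Write $S(U)*S(W)$ as the double mapping cylinder of $S(U)\leftarrow S(U)\times S(W)\to S(W)$.
\item Use that the homotopy colimit commutes with products by a fixed space and with pushouts. The numerator becomes the double mapping cylinder of $S(U)\times\hocolim_{\cL\cap U^\perp}*\leftarrow S(U)\times\hocolim S(W)\to\hocolim S(W)$. The denominator is the same with the poset $(\cL\cap U^\perp)\setminus\{U^\perp\}$.
\item Both posets have the initial object $\{0\}$, which corresponds to $U$. Hence $\hocolim *$, the nerve, is contractible in both cases.
\item Collapsing that nerve to a point is exactly your comparison map. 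It gives compatible equivalences of the numerator and denominator with $S(U)*\hocolim_W S(W)$ and $S(U)*\hocolim_{W\neq U^\perp}S(W)$.
\item The denominators are subcomplexes by \cref{pt-subcomplex}, so the quotients are equivalent. \Cref{quotient-of-joins} then identifies the target with $\Sigma^U\PT^{\cL\cap U^\perp}(S(U^\perp))$.
\end{enumerate}
Your remark about the ``contractible-in-$\Delta$ directions'' points at step 3. However, you then try to dispose of those directions by pushing them into the denominator, which is exactly where the proposal goes wrong.
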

	
	\begin{proof}
		Since $\cL$ consists entirely of subspaces of the form $U \oplus V$ for $V \in \cL \cap U^\perp$, we can rewrite the definition of $\PT^\cL(S^n)$ from \cref{df:st-and-pt} as
	\begin{align*}
		\PT^{\cL}(S^n) &= \frac{ \underset{V \in (\cL \cap U^\perp)}\hocolim\, S(U) * S(V) }{ \underset{V \in (\cL \cap U^\perp) \setminus \{U^\perp\}}\hocolim\, S(U) * S(V) }.
	\end{align*}
	We write the numerator and denominator as pushouts of the following two diagrams, using the fact that the homotopy colimit commutes with products and pushouts:
	\[ \begin{tikzcd}[column sep = 2.5em, row sep = .5em]
		S(U) \times \underset{V \in (\cL \cap U^\perp)}\hocolim\, {*} & \lar S(U) \times \underset{V \in (\cL \cap U^\perp)}\hocolim\, S(V) \rar & {*} \times \underset{V \in (\cL \cap U^\perp)}\hocolim\, S(V) \\
		S(U) \times \underset{V \in (\cL \cap U^\perp) \setminus \{U^\perp\}}\hocolim\, {*} & \lar S(U) \times \underset{V \in (\cL \cap U^\perp) \setminus \{U^\perp\}}\hocolim\, S(V) \rar & {*} \times \underset{V \in (\cL \cap U^\perp) \setminus \{U^\perp\}}\hocolim\, S(V)
	\end{tikzcd} \]
	Since the posets $(\cL \cap U^\perp)$ and $(\cL \cap U^\perp) \setminus \{U^\perp\}$ both have an initial object, the homotopy colimit of a constant diagram on either one of these is equivalent to the value of the constant functor. Therefore, the homotopy colimits appearing in the left terms are contractible and each of the two pushouts simplifies to the join of $S(U)$ with the homotopy colimit appearing in the right term. Hence, we get an equivalence
	\begin{align*}
		\PT^{\cL}(S^n) &\simeq \frac{ S(U) * \underset{V \in (\cL \cap U^\perp)}\hocolim\, S(V) }{ S(U) * \underset{V \in (\cL \cap U^\perp) \setminus \{U^\perp\}}\hocolim\, S(V) }.
	\end{align*}
	Using \cref{quotient-of-joins}, this simplifies to
	\[
		\Sigma^U \left( \frac{ \underset{V \in (\cL \cap U^\perp)}\hocolim\, S(V) }{ \underset{V \in (\cL \cap U^\perp) \setminus \{U^\perp\}}\hocolim\, S(V) } \right)
		\cong \Sigma^U \PT^{\cL \cap U^\perp}(S(U^\perp)).\qedhere
	\]
	\end{proof}
	
	Given all of this, we clearly ought to define an apartment-like map on $S(U) * P$ to be an apartment-like map on $P$, joined with the identity map of $S(U)$. When we take the quotient by the boundary, this gives the $U$-suspension of the suspended apartment class for $P$.
	
	\begin{definition}\label{apartment_pt_sn_2}
		If $P$ is a strongly convex polytope in $S(U^\perp)$, we define the \emph{suspended apartment} of $S(U) * P$ to be the operation $\Sigma^U(-)$ applied to the apartment map of $P$ in $\PT^{\cL \cap U^\perp}(S(U^\perp))$:
		\[ \begin{tikzcd}[column sep = 4em] \Sigma^U (P/\partial P) \rar{\Sigma^U(\apt)} & \Sigma^U \PT^{\cL \cap U^\perp}(S(U^\perp)) \cong \PT^{\cL}(S^n). \end{tikzcd} \]
	\end{definition}
	
	\begin{corollary}[Solomon-Tits for $\cL$-polytopes in $S^n$, non-empty case]
		For all $n \geq 0$, if $\cL$ is generated by hyperplanes and $\cL^{n-1} \neq \varnothing$, then $\PT^\cL(S^n)$ is equivalent to a wedge of $n$-spheres and the apartment classes of \cref{apartment_pt_sn_2} induce an isomorphism
		\[ \Pt^\cL(S^n) \overset{\cong}\longrightarrow \widetilde H_{n}(\PT^\cL(S^n)). \]
	\end{corollary}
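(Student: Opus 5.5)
Let $U = \bigcap \cL$, the intersection of all hyperplanes in $\cL$ viewed as linear subspaces of $\R^{n+1}$. Since $\cL^{n-1} \neq \varnothing$, $U$ is a proper subspace, so $\dim U^\perp > 0$. The plan is to reduce to the admissible case, \cref{main-sn}, applied to the collection $\cL \cap U^\perp$ in the sphere $S(U^\perp)$ of dimension $m = \dim U^\perp - 1 = n - \dim U$. By the lemma preceding \cref{pt-suspended-0}, this collection is admissible.

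\textbf{The case $U = \{0\}$.} Here $\cL$ is itself admissible and \cref{main-sn} applies directly. The apartments of \cref{apartment_pt_sn_2} agree with those of \cref{apartment_pt_sn}, since $\Sigma^{\{0\}}$ is the identity and $S(\{0\}) * P = P$.

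\textbf{The homotopy type in general.} \Cref{main-sn} gives $\PT^{\cL \cap U^\perp}(S(U^\perp)) \simeq \bigvee S^m$. \Cref{pt-suspended} then gives
\[ \PT^\cL(S^n) \cong \Sigma^U \PT^{\cL \cap U^\perp}(S(U^\perp)) \simeq \bigvee S^{m + \dim U} = \bigvee S^n. \]

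\textbf{The isomorphism on $\widetilde H_n$.} I would assemble the commutative square
\[ \begin{tikzcd} \Pt^{\cL \cap U^\perp}(S(U^\perp)) \rar{\apt} \dar[swap]{\cong} & \widetilde H_m(\PT^{\cL \cap U^\perp}(S(U^\perp))) \dar{\cong} \\ \Pt^\cL(S^n) \rar{\apt} & \widetilde H_n(\PT^\cL(S^n)). \end{tikzcd} \]
\begin{itemize}
\item The left vertical map is $[P] \mapsto [S(U) * P]$, the inverse of the isomorphism in \cref{pt-suspended-0}.
\item The right vertical map is the suspension isomorphism $\widetilde H_m(X) \cong \widetilde H_{m+\dim U}(\Sigma^U X)$ composed with the homeomorphism of \cref{pt-suspended}.
\item The top map is an isomorphism by \cref{main-sn}.
\end{itemize}
The bottom map is defined on generators $[S(U)*P]$, with $P$ strongly convex in $S(U^\perp)$, by \cref{apartment_pt_sn_2}. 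The square commutes by construction, since that definition is exactly $\Sigma^U$ of the apartment map of $P$. The signs work out provided we orient $S^n$ compatibly with orientations on $S(U^\perp)$ and $S^U$, so that $\Sigma^U(P/\partial P) \cong (S(U)*P)/\partial(S(U)*P)$ carries fundamental class to fundamental class; \cref{quotient-of-joins} supplies this homeomorphism. Once the square commutes, the bottom map is a composite of isomorphisms and hence an isomorphism. In particular it is well defined and additive: every class in $\Pt^\cL(S^n)$ is represented by joins of strongly convex polytopes, because every $\cL \cap U^\perp$-polytope decomposes into strongly convex pieces.

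\textbf{Main obstacle.} The step I expect to need care is the identification in \cref{pt-suspended}. It passes through hocolim manipulations and the equivalence with joins, and some steps there are equivalences rather than literal homeomorphisms. So one must check that the apartment-like map of $P$, joined with $\mathrm{id}_{S(U)}$, is carried to $\Sigma^U$ of the apartment map of $P$ up to homotopy. Ideally one would also check that this joined map is apartment-like in a sense compatible with \cref{pt-apt-like}, so the new apartments are canonical.

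I would handle this using naturality of \cref{quotient-of-joins} in inclusions of pairs. The zigzag in \cref{pt-suspended} is natural in the diagrams, so it commutes with the maps induced by an apartment-like map $P \to \hocolim S(V)$. The signs reduce to the orientation convention fixed above.
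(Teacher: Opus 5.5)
Your proposal is correct and follows the paper's proof: you reduce via \cref{pt-suspended-0} and \cref{pt-suspended} to \cref{main-sn} for the admissible collection $\cL \cap U^\perp$ in $S(U^\perp)$, and use that $\Sigma^U$ induces an isomorphism on homology, where the apartments of \cref{apartment_pt_sn_2} are defined precisely so that your square commutes. Your extra care only fills in details the paper leaves implicit: the separate $U=\{0\}$ case, the correct dimension count $m=\dim U^\perp-1$, and the orientation remark. The ``main obstacle'' you flag is not actually needed, since those apartments are defined by transport along that identification.
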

	\begin{proof}
		By \cref{main-sn}, the space $\PT^{\cL \cap U^\perp}(S(U^\perp))$ is a wedge of spheres of dimension $\dim U^\perp$. Since $\cL^{n-1}$ is non-empty, $\dim U^\perp > 0$. Therefore by \cref{pt-suspended} the space $\PT^\cL(S^n)$ is a wedge of spheres of dimension $\dim U + \dim U^\perp = n$. Furthermore, since suspension by $U$ induces an isomorphism on homology, the apartment maps as defined in \cref{apartment_pt_sn_2} induce an isomorphism
		\[ \begin{tikzcd}[column sep = 3em] \Pt^\cL(S^n) \cong \Pt^{\cL \cap U^\perp}(S(U^\perp)) \rar{\apt}[swap]{\cong} & H_n(\Sigma^U \PT^{\cL \cap U^\perp}(S(U^\perp))) \cong H_n(\PT^\cL(S^n)) \end{tikzcd} \]
		with the first isomorphism coming from \cref{pt-suspended-0}.
	\end{proof}
	
	\begin{remark}It turns out that a version of this theorem is true if $\cL^{n-1}$ is empty as well, in other words if $\cL = \{\R^{n+1}\}$. In that case the only $\cL$-polytope is the whole sphere $S^n$, so we would need $\PT^\varnothing(S^n) \simeq S^n$. This is not true in the definition that we gave in \cref{df:st-and-pt}, but there is an alternate model for the one-fold suspension $\Sigma\PT$ that extends to the case where $\cL = \varnothing$, and in that case gives the expected answer of $\Sigma\PT \simeq S^{n+1}$:
	\[ \Sigma\PT^\cL(S(W)) \coloneq \frac{ \underset{V \in \cL}\hocolim\, S^V }{ \underset{V \in \cL \setminus \{W\}}\hocolim\, S^V }. \]
	The details of this construction will appear in \cite{KKMMW2}.\end{remark}

	\section{Generation by both points and hyperplanes}
	
	We say that $\cL$ is \emph{generated by points and hyperplanes} if it is both generated by points and generated by hyperplanes. In this stronger but still frequently-occurring case, we can relate the polytope and Lee--Szczarba groups (\cref{lem:gen-points-pt-to-ls}). 
	
	We assume that $n \geq 1$ throughout, so that the dimension of a point is less than or equal to the dimension of a hyperplane. Note that in the Euclidean and spherical case a collection $\cL$ that is generated by points and hyperplanes must be admissible: there is no way to be generated by points unless $\cL^0$ has more than one 0-dimensional subspace (since $n \geq 1$).
	
	\begin{lemma}\label{cut_into_simplices}
		If $\cL$ is generated by points and hyperplanes, then every convex $\cL$-polytope can be subdivided into $\cL$-simplices.
	\end{lemma}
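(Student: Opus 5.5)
We will subdivide by induction on dimension, coning from a vertex; the real content is checking that all vertices and all cut hyperplanes stay in $\cL$. First we check that the vertices of a convex $\cL$-polytope $P$ are points of $\cL^0$. By \cref{faces_to_L}, the span of every face of $P$, in particular of every vertex, is an intersection of facet hyperplanes of $P$, which lie in $\cL^{n-1}$. Since $\cL$ is generated by hyperplanes, this intersection lies in $\cL$, so every vertex lies in $\cL^0$. (For spherical polytopes we first cut along coordinate hyperplanes, using \cref{admissible-contains-coords} and the remark that $\cL$ is admissible here. This reduces to strongly convex $P$, which genuinely have vertices.)

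Next we triangulate by pulling a vertex, by induction on $n$. Fix a vertex $v$ of $P$. For each facet $F$ of $P$ not containing $v$, $F$ is a convex $(\cL\cap \spa F)$-polytope in $\spa F \cong X^{n-1}$. The collection $\cL \cap \spa F$ is still generated by points: its points are the elements of $\cL^0$ in $\spa F$, and these span it. It is also still generated by hyperplanes, since its hyperplanes are the intersections $L\cap\spa F$ for $L \in \cL^{n-1}$. So by induction $F$ is subdivided into $(\cL\cap\spa F)$-simplices $\sigma_j$, with vertices in $\cL^0$. The cones $v * \sigma_j$ (geodesic convex hulls) then subdivide $P$. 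This is the standard fact that a convex polytope is the union of cones from a vertex over the facets not containing it, with disjoint interiors; in the spherical case strong convexity makes the geodesic cone well-defined. The base case $n=1$ is trivial, since a segment with endpoints in $\cL^0$ is already a simplex.

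The remaining step, and the main obstacle, is to check that each $v*\sigma_j$ is really an $\cL$-polytope, i.e. cut out by hyperplanes in $\cL^{n-1}$, and that the pieces give a subdivision in the sense of \cref{def:polytope-group}. A simplex with vertices $x_0,\ldots,x_n\in\cL^0$ in general position has facet hyperplanes $\spa(x_0,\ldots,\widehat{x_i},\ldots,x_n)$. These lie in $\cL$ because $\cL$ is generated by points. So an $\cL$-simplex, meaning a simplex with vertices in $\cL^0$, is automatically a convex $\cL$-polytope, and this is the only place both generation hypotheses are used together. Nondegeneracy holds because $v\notin \spa F$, so each cone is $n$-dimensional. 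Disjointness of interiors follows from the facet-cone decomposition together with the inductive disjointness within each $F$. If one wants pieces cut out by a common hyperplane arrangement (the ``without loss of generality'' form of subdivisions), we can further refine along all the facet hyperplanes of all the pieces. Those hyperplanes are in $\cL$, and the refined pieces are convex $\cL$-polytopes. We then apply the argument again, but the subdivision relation already holds in weak form, so this is unnecessary.
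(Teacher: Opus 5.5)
Your argument is correct, but it builds the triangulation differently from the paper. The paper runs the iterated placing algorithm of Lee--Santos: order the vertices, and cone each new vertex either over the whole current triangulation or over the facets visible from it. It proves this only in $E^n$ and transfers it to $H^n$ and to an open hemisphere of $S^n$ by radial projection onto an affine chart, which sends geometric subspaces to affine subspaces. You instead use a pulling triangulation: cone a fixed vertex $v$ over triangulations of the facets not containing $v$, inducting on dimension and using geodesic cones directly.

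The $\cL$-theoretic content is the same in both arguments. Every simplex produced has vertices among those of $P$, these lie in $\cL^0$, and a simplex with vertices in $\cL^0$ has facet hyperplanes spanned by points of $\cL^0$, hence in $\cL^{n-1}$.

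Your version is more explicit than the paper's. You justify, via \cref{faces_to_L} and generation by hyperplanes, that the vertices of $P$ lie in $\cL^0$, which the paper leaves implicit, and you check $v\notin\spa F$ for nondegeneracy. The cost is that, by running the induction inside the $\cL$-framework, you must verify that $\cL\cap\spa F$ is again generated by points and by hyperplanes. That verification disappears if, like the paper, you triangulate purely geometrically and only check at the end where the vertices lie.

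Also, the cone decomposition $P=\bigcup_{F\not\ni v} v*F$ with disjoint interiors in $H^n$ or a hemisphere is most easily justified by exactly the paper's projection to an affine chart. So that step is less ``intrinsic'' than you present it, and you say nothing about it in the hyperbolic case.

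Your final paragraph on refining along a common arrangement is muddled, since refining would break the simplices into non-simplicial pieces. But you are right that it is unnecessary: subdivisions in \cref{def:polytope-group} need not be face-to-face.
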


	\begin{proof}
	Since $\cL$ has to be admissible in the spherical case, we may assume that we have cut the $\cL$-polytope into strongly convex pieces and focus on one such piece in this case. In the following, we consider a (strongly) convex $\cL$-polytope with vertices $\{v_1, \dots, v_k\}$ in $X^n$.
		
	We start with the Euclidean case and follow the iterated placing algorithm from \cite[16.2.1(2)]{lee_santos}: choose a total ordering on the vertices $\{v_1,\ldots,v_k\}$, and then inductively define a triangulation of the convex hull of $\{v_1,\ldots,v_i\}$ as follows. Given a triangulation of $\{v_1,\ldots,v_{i-1}\}$, if $v_i$ is outside the affine span of $\{v_1,\ldots,v_{i-1}\}$, then we ``cone off'' to $v_i$ by taking the join of each of the simplices in the given triangulation with $v_i$.
	
	\vspace{1em}
	\centerline{
	\def\svgwidth{1.45in}
\begingroup%
  \makeatletter%
  \providecommand\color[2][]{%
    \errmessage{(Inkscape) Color is used for the text in Inkscape, but the package 'color.sty' is not loaded}%
    \renewcommand\color[2][]{}%
  }%
  \providecommand\transparent[1]{%
    \errmessage{(Inkscape) Transparency is used (non-zero) for the text in Inkscape, but the package 'transparent.sty' is not loaded}%
    \renewcommand\transparent[1]{}%
  }%
  \providecommand\rotatebox[2]{#2}%
  \newcommand*\fsize{\dimexpr\f@size pt\relax}%
  \newcommand*\lineheight[1]{\fontsize{\fsize}{#1\fsize}\selectfont}%
  \ifx\svgwidth\undefined%
    \setlength{\unitlength}{93.92967742bp}%
    \ifx\svgscale\undefined%
      \relax%
    \else%
      \setlength{\unitlength}{\unitlength * \real{\svgscale}}%
    \fi%
  \else%
    \setlength{\unitlength}{\svgwidth}%
  \fi%
  \global\let\svgwidth\undefined%
  \global\let\svgscale\undefined%
  \makeatother%
  \begin{picture}(1,0.79968899)%
    \lineheight{1}%
    \setlength\tabcolsep{0pt}%
    \put(0,0){\includegraphics[width=\unitlength,page=1]{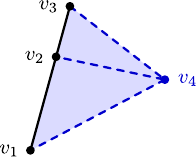}}%
  \end{picture}%
\endgroup%

	}
	\vspace{1em}
	
	If $v_i$ is inside the affine span of $\{v_1,\ldots,v_{i-1}\}$, then we take only the ``visible facets'' and take their join with $v_i$. (A facet $F$ is visible to $v_i$ if the point $v_i$ and the convex hull of $\{v_1,\ldots,v_{i-1}\}$ lie on opposite sides of $\spa F$.)
	
	\vspace{1em}
	\centerline{
	\def\svgwidth{1.7in}
\begingroup%
  \makeatletter%
  \providecommand\color[2][]{%
    \errmessage{(Inkscape) Color is used for the text in Inkscape, but the package 'color.sty' is not loaded}%
    \renewcommand\color[2][]{}%
  }%
  \providecommand\transparent[1]{%
    \errmessage{(Inkscape) Transparency is used (non-zero) for the text in Inkscape, but the package 'transparent.sty' is not loaded}%
    \renewcommand\transparent[1]{}%
  }%
  \providecommand\rotatebox[2]{#2}%
  \newcommand*\fsize{\dimexpr\f@size pt\relax}%
  \newcommand*\lineheight[1]{\fontsize{\fsize}{#1\fsize}\selectfont}%
  \ifx\svgwidth\undefined%
    \setlength{\unitlength}{107.91535117bp}%
    \ifx\svgscale\undefined%
      \relax%
    \else%
      \setlength{\unitlength}{\unitlength * \real{\svgscale}}%
    \fi%
  \else%
    \setlength{\unitlength}{\svgwidth}%
  \fi%
  \global\let\svgwidth\undefined%
  \global\let\svgscale\undefined%
  \makeatother%
  \begin{picture}(1,0.84563331)%
    \lineheight{1}%
    \setlength\tabcolsep{0pt}%
    \put(0,0){\includegraphics[width=\unitlength,page=1]{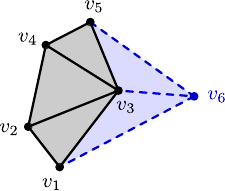}}%
  \end{picture}%
\endgroup%

	}
	\vspace{1em}
	
	This algorithm decomposes the polytope into simplices, each of which is a convex $\cL$-polytope because its vertices are in $\cL^0$. 
	
	To finish, we observe that this algorithm works in all three geometries: given its validity in Euclidean geometry, we deduce its validity in hyperbolic geometry or in an open hemisphere in spherical geometry by radially projecting from the hyperboloid $-x_0^2+\sum_{i=1}^n x_i^2 = -1$ or the hemisphere $\sum_{i=0}^n x_i^2 = 1, x_0 > 0$ to the hyperplane $x_0 = 0$. This sends the geometric subspaces to affine-linear subspaces, so that if we carry out this algorithm in Euclidean space $E^n$, we get a resulting triangulation in $H^n$ or $S^n$ as well.
	\end{proof}
	
	\begin{lemma}\label{lem:open_hemisphere}
		For any set of $\leq (n+1)$ points in $S^n$, if they are linearly independent as vectors in $\R^{n+1}$, then they lie in an open hemisphere. Any finite collection of points can be made to lie in an open hemisphere by negating some of them.
	\end{lemma}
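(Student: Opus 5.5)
For the first assertion I would argue by linear algebra in $\R^{n+1}$. Let $x_1,\ldots,x_k$ with $k \leq n+1$ be linearly independent unit vectors. Extend them to a basis $x_1,\ldots,x_{n+1}$ of $\R^{n+1}$, and let $f \colon \R^{n+1} \to \R$ be the linear functional with $f(x_i) = 1$ for every $i$. This functional exists and is non-zero. Its kernel is a hyperplane, and $\{v : f(v) > 0\}$ meets $S^n$ in an open hemisphere; concretely, taking $w$ to be the vector representing $f$ under the standard inner product, this is $\{v \in S^n : \langle v, w\rangle > 0\}$. Since $f(x_i) = 1 > 0$, every $x_i$ lies in this hemisphere.

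For the second assertion, take a finite set of points $x_1,\ldots,x_m \in S^n$. I would choose a vector $w \in \R^{n+1}$ with $\langle w, x_i\rangle \neq 0$ for all $i$. Such a $w$ exists because each condition $\langle w, x_i\rangle = 0$ cuts out a hyperplane, since $x_i \neq 0$, and a finite union of hyperplanes cannot cover $\R^{n+1}$. Then replace each $x_i$ with $\langle w, x_i\rangle < 0$ by its antipode $-x_i$. After this, every point satisfies $\langle w, \cdot\rangle > 0$, so all of them lie in the open hemisphere determined by $w$. In the conventions of \cref{rem:lee-szczarba-group}, negating points does not change the underlying elements of $\cL^0$, which are antipodal pairs; this is why the statement is phrased this way.

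Both steps are elementary, and I do not expect a real obstacle. The only points to be careful about are that the functional $f$ is non-zero, which holds because it takes the value $1$ on a basis vector, and the finite-union-of-hyperplanes avoidance in the second step, which is standard (for instance by Baire category or by a dimension count over $\R$).
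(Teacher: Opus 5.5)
Your proof is correct, and it takes a different and more direct route than the paper.

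The paper proves the first statement by induction on the number of points:
\begin{enumerate}
\item It assumes the first $n$ points lie in an open hemisphere of the great sphere $S^{n-1}$ that they span.
\item It changes coordinates to write $S^n = S^{n-2} * S^1$, so that those $n$ points all have positive join coordinate toward a single point $x \in S^1$.
\item It observes that the last point has positive join coordinate toward some $y \in S^1$ with $y \neq -x$. This is where linear independence is used.
\item It joins an open semicircle containing $x$ and $y$ with $S^{n-2}$ to get the required open hemisphere.
\end{enumerate}
The second statement is only called ``similar and easier'' there.

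You instead produce the separating hyperplane in one step, as the kernel of the functional that takes the value $1$ on an extended basis. For the second statement, you pick a generic $w$ avoiding the finitely many hyperplanes $x_i^\perp$ and flip signs accordingly.

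Your version is shorter and fully rigorous. It needs no induction and no coordinate or join bookkeeping, and it uses linear independence exactly once, to extend to a basis. It also supplies an actual argument for the second statement where the paper gives none. The paper's argument has the small advantage of staying inside the join picture used in the spherical sections, but it is otherwise a longer path to the same conclusion.
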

	
	\begin{proof}
		We prove the first statement, the proof of the second is similar and easier. Without loss of generality we have $(n+1)$ points, and inductively, the first $n$ points lie in an open hemisphere of an $S^{n-1}$ inside $S^n$. Changing coordinates, we have $S^n = S^{n-2} * S^1$, with the first $n$ points all lying in the join of $S^{n-2}$ with a single point $x \in S^1$, with positive join coordinate (so not in $S^{n-2}$). The remaining point also has positive join coordinate with some point $y \in S^1$ with $y \neq -x$. Now, $x,y \in S^1$ must lie in an open hemisphere of $S^1$. Taking the join of this with $S^{n-2}$ gives the open hemisphere in $S^n$ containing all $(n+1)$ of our points.
	\end{proof}

	\begin{lemma}\label{lem:map-pt-to-ls}
		If $\cL$ is generated by points and hyperplanes, there is a unique map
		\[ \Pt^\cL(X^n) \longrightarrow \Ls^\cL(X^n) \]
		of $\Z[G_\cL]$-modules with the property that every simplex is sent to its tuple of vertices, with ordering chosen to agree with some fixed orientation on $X^n$.
	\end{lemma}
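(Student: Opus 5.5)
Uniqueness is immediate from \cref{cut_into_simplices}. Every convex $\cL$-polytope subdivides into $\cL$-simplices, so the classes of $\cL$-simplices generate $\Pt^\cL(X^n)$, and any homomorphism is determined by its values on them. In the spherical case I first cut into strongly convex pieces, using that admissibility is automatic here, so that every simplex lies in an open hemisphere and "its tuple of vertices" makes sense. The sign convention is to order the vertices $(x_0,\ldots,x_n)$ so that the induced orientation agrees with the fixed one on $X^n$. The alternating relations in $\Ls^\cL(X^n)$, applied to a tuple with a repeated entry, show that permuting a tuple multiplies it by the sign of the permutation. Hence the value $\sigma \mapsto (x_0,\ldots,x_n)$ is independent of which positively oriented ordering we choose. $G_\cL$-equivariance fails only up to a sign when $g$ reverses orientation. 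I would build this sign into the action as the paper implicitly does, or simply note it if $G_\cL$ preserves orientation.

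For existence I would avoid checking the subdivision relation by hand and instead use the two Solomon--Tits isomorphisms. In the Euclidean and hyperbolic cases, \cref{main-en} and \cref{main-hn} give $\apt\colon \Pt^\cL(X^n) \cong \widetilde H_n(\ST^\cL(X^n))$, and \cref{steinberg_case} gives $\apt\colon \Ls^\cL(X^n)\cong \widetilde H_n(\ST^\cL(X^n))$. I would define the map as $\apt_{\Ls}^{-1}\circ \apt_{\Pt}$. This is a homomorphism, and it is $\Z[G_\cL]$-linear by naturality of apartment classes. It remains to check that a simplex $\sigma$ with vertices $x_0,\ldots,x_n$ goes to its vertex tuple, i.e.\ that $\apt(\sigma)$ in the sense of \cref{apartment_pt} equals $\apt(x_0,\ldots,x_n)$ in the sense of \cref{apartment_st}. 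The faces of $\sigma$ are exactly the simplices on subsets of the vertices, and $\spa F$ of such a face is the span of those vertices. Thus an apartment-like map $\partial\sigma\to\T^\cL(X^n)$ in the sense of \cref{st-apt-like-2} is, under $\partial\sigma\cong\partial\Delta^n$, precisely apartment-like in the sense of \cref{st-apt-like}. The standard flag map \eqref{eqn:apt-map} is of this type. Weak contractibility (\cref{apt-like-preserved}) then makes the two coned-off maps homotopic. The orientation convention matches the fundamental classes of $\Delta^n/\partial\Delta^n$ and $\sigma/\partial\sigma$.

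The spherical case is the main obstacle, since there $\Pt^\cL$ computes $\widetilde H_n(\PT^\cL(S^n))$ (\cref{main-sn}) while $\Ls^\cL$ computes $\widetilde H_n(\ST^\cL(S^n))$, and these differ. I would therefore compose $\apt_{\Pt}$ with the map induced by the forgetful map $\PT^\cL(S^n)\to\ST^\cL(S^n)$, and then with $\apt_{\Ls}^{-1}$. The paper notes after \cref{apartment_pt_sn} that apartment classes of strongly convex polytopes go to the $\ST$-apartment classes under the forgetful map. The comparison above for strongly convex simplices then shows the composite sends each simplex to its vertex tuple. This composite need not be injective, as expected from the suspension relation discussed in the introduction. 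The delicate points are verifying that this forgetful compatibility holds at the level of the chosen apartment-like maps, which I would deduce from weak contractibility of the target spaces $\CT^\cL(\spa F)$, and the bookkeeping of orientations.
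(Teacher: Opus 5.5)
This is essentially the paper's proof. You define the map as $\apt_{\Ls}^{-1}\circ\apt_{\Pt}$ through $\widetilde H_n(\ST^\cL(X^n))$ using \cref{steinberg_case}, match the apartment class of an $\cL$-simplex with that of its vertex tuple via apartment-like maps, and get uniqueness from \cref{cut_into_simplices}. A few adjustments:
\begin{itemize}
\item You only need $\apt_{\Pt}$ to be a well-defined homomorphism, which is \cref{pt-apt-well-defined}, not an isomorphism via \cref{main-hn}. That theorem's hyperbolic admissibility hypothesis does not follow from generation by points and hyperplanes.
\item \cref{pt-apt-well-defined} also applies directly to strongly convex polytopes in $S^n$, so your detour through $\PT^\cL(S^n)$ is harmless but unnecessary.
\item Your caveat that equivariance holds only up to the orientation character for orientation-reversing $g \in G_\cL$ is correct, and the paper's proof passes over it.
\end{itemize}
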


	\begin{proof}
		Recall from \cref{st-apt-like-2} that a map $(P,\partial P) \to (\CT^\cL(X^n),\T^\cL(X^n))$ is apartment-like if each face $F$ of $\partial P$ is sent into the contractible complex $\CT^{\cL}(\spa F)$. By \cref{apt-like-preserved-2}, the space of such is weakly contractible and therefore non-empty. This is used to construct the left-hand diagonal map in the following diagram.
		\[\begin{tikzcd} \Pt^\cL(X^n) \arrow{rd}[swap]{\apt}  \arrow[dashed]{rr}& & \Ls^\cL(X^n) \arrow{ld}{\apt}[swap]{\cong} \\[-5pt]
			& \widetilde{H}_n(\ST^\cL(X^n)), &  \end{tikzcd} \]
		The right-hand diagonal map is an isomorphism by \cref{thm:generated-by-points}, so there is a unique dashed map making the diagram commute. By \cref{cut_into_simplices} each $\cL$-polytope can be cut into simplices, and if $P$ is a simplex on vertices $(x_0,\ldots,x_n)$ (with ordering chosen to agree with the orientation of $X^n$) then $\apt([P]) = \apt(x_0,\ldots,x_n)$. So the dashed map is the only map that exists with the required behaviour on simplices.
	\end{proof}

	\begin{theorem}\label{lem:gen-points-pt-to-ls}
		For any collection $\cL$ that is generated by points and hyperplanes:
		\begin{enumerate}
			\item The canonical map $\Pt^\cL(E^n) \to \Ls^\cL(E^n)$ is an isomorphism.
			\item The canonical map $\Pt^\cL(H^n) \to \Ls^\cL(H^n)$ is an isomorphism.
			\item The canonical map $\Pt^\cL(S^n) \to \Ls^\cL(S^n)$ is surjective, and the kernel is generated by joins of the form $\{x,-x\} * P$, where $\{x,-x\} \in \cL^0$ and $P$ is an $(n-1)$-simplex with vertices in $\cL^0$.
		\end{enumerate}
	\end{theorem}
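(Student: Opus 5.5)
The canonical map $\Pt^\cL(X^n)\to\Ls^\cL(X^n)$ of \cref{lem:map-pt-to-ls} is, by construction, the composite of $\apt\colon\Pt^\cL(X^n)\to\widetilde H_n(\ST^\cL(X^n))$ with the inverse of the isomorphism $\apt\colon\Ls^\cL(X^n)\cong\widetilde H_n(\ST^\cL(X^n))$ from \cref{thm:generated-by-points}. So everything reduces to understanding $\apt$ on $\Pt^\cL(X^n)$. We do not need to check admissibility separately: generation by points already forces it. In the Euclidean case $\cL^0\neq\varnothing$. In the spherical case the points of $\cL^0$ span, so by the duality used for \cref{thm:generated-by-hyperplanes-spherical} the hyperplanes meet only in $0$.

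For (i) and (ii), we first check that in $E^n$ and $H^n$ the forgetful map $\PT^\cL\to\ST^\cL$ is an equivalence compatible with apartment classes. Then \cref{main-en}, and \cref{main-hn} provided $\cL$ is admissible in the sense of \cref{admissible-hn}, show that $\apt$ on $\Pt^\cL$ is an isomorphism. Hence the canonical map is an isomorphism.

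In the hyperbolic case, admissibility has to be proved rather than assumed. For this we use the lemma characterising admissibility by rays. Given $x$, a ray from $x$, and $r>0$, we pick points of $\cL^0$ far out near the ray; these exist because $\cL^0$ contains $n+1$ points in general position and is closed under the intersection-of-spans operations. We then take the hyperplane spanned by $n$ such points, which lies beyond distance $r$ and crosses the ray. This density step is the main obstacle of (ii). Should it fail, the fallback is to prove surjectivity and injectivity of $\Pt\to\Ls$ directly. Surjectivity follows because each nondegenerate tuple is $\pm$ a simplex. For injectivity we would use the indicator-function embedding of \cref{pt_subgroup} together with an inverse $\Ls\to\Pt$ sending a tuple to its signed simplex, and verify the $(n+2)$-term relation via the Euclidean identity in a Klein-model chart.

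For (iii), surjectivity is straightforward. By \cref{lem:open_hemisphere}, each nondegenerate tuple, after negating some entries (which does not change its class by \cref{rem:lee-szczarba-group}), spans a strongly convex $\cL$-simplex, and that simplex maps to $\pm$ the tuple. For the kernel, compare the two complexes. \cref{main-sn} gives $\Pt^\cL(S^n)\cong\widetilde H_n(\PT^\cL(S^n))$, while \cref{thm:generated-by-points} gives $\Ls^\cL(S^n)\cong\widetilde H_n(\ST^\cL(S^n))$. The canonical map is therefore induced by the forgetful map $\PT^\cL(S^n)\to\ST^\cL(S^n)$, which collapses each $S(V)$ to a point.

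The joins $\{x,-x\}*P$ lie in the kernel. The simplices $x*P$ and $(-x)*P$ map to tuples that differ only by replacing $x$ with $-x$, and the orientations of the two pieces are opposite, so their images cancel.

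For the reverse inclusion, we filter by the hyperplane-by-hyperplane induction used in \cref{main-sn}. Alternatively, we work directly: any kernel element can be cut by \cref{cut_into_simplices} into a combination of simplices. The $\Ls$ relations are the simplicial cocycle relations, and these hold in $\Pt$ up to suspensions, since the signed decomposition of an $(n+2)$-point configuration on the sphere fails to be exact only by hemisphere-wrapping terms, which are joins $\{x,-x\}*P$. The plan is to define a map $\Ls^\cL(S^n)\to\Pt^\cL(S^n)/K$, where $K$ is the subgroup generated by the joins. It sends a tuple to its simplex after choosing signs, which is well defined modulo $K$ because changing a sign changes the result by a join. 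We then check the $(n+2)$-term relation modulo $K$ and conclude that this map inverts the canonical one. Verifying this relation, including degenerate configurations, is the expected hard step of (iii).
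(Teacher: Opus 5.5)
Your treatment of (i) is correct and differs from the paper's route. Generation by points gives $\cL^0\neq\varnothing$, so \cref{main-en} applies, and the canonical map, being $\apt_{\Ls}^{-1}\circ\apt_{\Pt}$, is an isomorphism. The paper instead proves (i) and (ii) together by writing down a direct inverse.

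\textbf{Part (ii).} Your primary route rests on a false claim: generation by points and hyperplanes does \emph{not} imply admissibility in the sense of \cref{admissible-hn}. Take $n+1$ points of $H^n$ in general position and let $\cL$ be all non-empty spans of subsets of them. This $\cL$ is generated by points, and also by hyperplanes, since intersections of facet spans of a simplex are spans of common faces or empty. But $\cL$ is finite, so no increasing sequence of $\cL$-polytopes exhausts $H^n$. Intersecting spans produces no new points at all, let alone points far out along a ray; already $n=1$, $\cL^0=\{p,q\}$ fails. This is why the paper stresses that (ii) covers hyperbolic cases that \cref{main-hn} does not.

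So your ``fallback'' has to be the proof, and it is essentially the paper's. Define $\phi\colon\Ls^\cL(H^n)\to\Pt^\cL(H^n)$ by signed simplices and check the $(n+2)$-term relation. The paper does this using that $\partial\Delta^{n+1}\to X^n$ has degree zero because $X^n$ is contractible, then applies the local degree formula after subdividing. Your Klein-chart reduction to the Euclidean identity together with \cref{pt_subgroup} works equally well. Then $c\circ\phi=\mathrm{id}$ for the canonical map $c$, and $\phi$ is surjective by \cref{cut_into_simplices}. You did not invoke that last step in (ii), but you need it for injectivity of $c$.

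\textbf{Part (iii).} Surjectivity and the inclusion $K\subseteq\ker$ are as in the paper, and your map $\phi\colon\Ls^\cL(S^n)\to\Pt^\cL(S^n)/K$ is the right one. However, you leave its compatibility with the $(n+2)$-term relation open, offering only a heuristic about ``hemisphere-wrapping terms.'' Filtering by the induction of \cref{main-sn} is not yet an argument. Knowing that the map is induced by $\PT^\cL(S^n)\to\ST^\cL(S^n)$ also does not identify the kernel unless you compute that map on $\widetilde H_n$.

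The missing idea is short:
\begin{enumerate}
\item Present $\Ls^\cL(S^n)$ as in \cref{rem:lee-szczarba-group}, imposing the sign-flip relations first; you already checked that $\phi$ respects these modulo $K$.
\item When checking the relation for $(y_0,\ldots,y_{n+1})$, you may then negate any $y_i$ simultaneously in all $n+2$ terms.
\item By the second half of \cref{lem:open_hemisphere}, this puts all $n+2$ points in one open hemisphere.
\item There, radial projection to an affine chart turns the relation into the Euclidean one, which holds already in $\Pt^\cL(S^n)$, degenerate terms included.
\end{enumerate}
Finally, $\phi$ is surjective: cut into strongly convex pieces and apply \cref{cut_into_simplices}. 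It is also right inverse to the induced map $\Pt^\cL(S^n)/K\to\Ls^\cL(S^n)$. Hence that map is an isomorphism and the kernel is exactly $K$.
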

	
	Note that in the case that $\cL$ is the set of all subspaces, this theorem appears in the literature in e.g.~\cite[Theorem 2.10 and Theorem 3.13]{dupont_book}, \cite[Section 5]{dupont_82}, \cite[Section 3]{morelli_93}.

	\begin{proof}
		 In the geometries $E^n$ and $H^n$, we define the inverse $\phi\colon \Ls^\cL(X^n) \to \Pt^\cL(X^n)$ by sending each $(n+1)$-tuple of points $(x_0,\ldots,x_n)$ to the unique simplex that has those vertices, $\pm[x_0,\ldots,x_n]$. The sign records whether the ordering on the vertices agrees with our fixed orientation on $X^n$. To see that $\phi$ is well-defined, we check that each expression of the form
		\begin{equation}\label{eq:simplicial-relation}
		\sum_{i=0}^{n+2} (-1)^i (y_0,\ldots,\hat{y_i},\ldots,y_{n+1})
		\end{equation}
		is sent to zero in $\Pt^\cL(X^n)$. This can be proven as in \cite[Theorem 2.10]{dupont_book}, but we give some details. We take the image of \eqref{eq:simplicial-relation} in $\Pt^\cL(X^n)$,
		\begin{equation*}\label{eq:simplicial-relation-2}
		\sum_{i=0}^{n+2} \pm [y_0,\ldots,\hat{y_i},\ldots,y_{n+1}],
		\end{equation*}
		and write it as the image of a map $f\colon \partial \Delta^{n+1} \to X^n$, with signs recording whether each face of $\partial \Delta^{n+1}$ is sent to $X^n$ in a way that preserves or reverses the orientation coming from $\partial \Delta^{n+1} \cong S^n$. By further subdividing, we can express this element of $\Pt^\cL(X^n)$ in the form $\sum_i n_i [P_i]$, where the $P_i$ have disjoint interiors. However, the map $f$ must be degree zero since $X^n$ is contractible, and therefore the local degree formula for homology gives $n_i = 0$ for all $i$. We conclude that the image of \eqref{eq:simplicial-relation} in $\Pt^\cL(X^n)$ must be zero.
		
		Now that $\phi\colon \Ls^\cL(X^n) \to \Pt^\cL(X^n)$ is well-defined, it is clearly a right inverse of the canonical map from \cref{lem:map-pt-to-ls}, and is therefore injective. By \cref{cut_into_simplices}, $\phi$ is surjective as well, and therefore both $\phi$ and the canonical map are bijections.
		
		\medskip

		 In the geometry $S^n$, the canonical map is surjective because each non-zero element of $\Ls^\cL(S^n)$ is given by a tuple $(x_0,\ldots,x_n)$ of linearly independent vectors, which by \cref{lem:open_hemisphere} lie in an open hemisphere and define a strongly convex $\cL^0$-polytope in $\Pt^\cL(S^n)$. If we let $J^\cL$ be the subgroup of $\Pt^\cL(S^n)$ generated by joins of the form $\{x,-x\} * P$, then the composite
		 \[ J^\cL \longrightarrow \Pt^\cL(S^n) \longrightarrow \Ls^\cL(S^n) \]
		 vanishes because the two joins $\{x\} * P$ and $\{-x\} * P$ go to the same element of $\Ls^\cL(S^n)$ by \cref{rem:lee-szczarba-group}, but with opposite signs. The canonical map thus factors through the quotient,
		 \[ \Pt^\cL(S^n)/J^\cL \longrightarrow \Ls^\cL(S^n). \]
		 
		 To define an inverse map $\phi\colon \Ls^\cL(S^n) \to \Pt^\cL(S^n)/J^\cL$, we send each tuple of linearly independent length-one vectors $(x_0,\ldots,x_n)$ to the simplex $\pm[\pm x_0,\ldots,\pm x_n]$, with sign on the outside chosen to reflect whether the simplex on the inside respects the fixed orientation on $S^n$. There are $2^{n+1}$ different possibilities here, but they all give the same element of $\Pt^\cL(S^n)/J^\cL$, since flipping the sign on any point $x_i$ gives two different simplices with opposite signs, so their difference has the same sign, and is therefore a join of the form $\{x_i,-x_i\} * [\pm x_1, \ldots, \hat{\pm x_i}, \ldots, \pm x_n]$.
		 
		 To finish verifying $\phi$ is well-defined, we have to check it sends the relations \eqref{eq:simplicial-relation} to zero. By \cref{rem:lee-szczarba-group}, we can change the presentation of $\Ls^\cL(S^n)$ by first imposing the relations
			\[\qquad \qquad (x_0,\ldots,x_{i-1},x_i,x_{i+1},\ldots,x_n) = (x_0,\ldots,x_{i-1},-x_i,x_{i+1},\ldots,x_n), \]
		and then imposing the simplicial relation \eqref{eq:simplicial-relation}. We have already checked that $\phi$ is well-defined after this first set of relations. But now, when checking the simplicial relations, we can take the terms $y_i$ and negate whichever ones we want before checking the relation. By \cref{lem:open_hemisphere}, we can therefore without loss of generality assume that the points $(y_0,\ldots,y_n)$ lie in an open hemisphere. From here, the argument that \eqref{eq:simplicial-relation} goes to zero is the same as in the Euclidean and hyperbolic case.
		
		Finally, now that $\phi\colon \Ls^\cL(S^n) \to \Pt^\cL(S^n)/J^\cL$ is well-defined, as before we argue it is surjective and a right inverse of the canonical map, and is therefore an isomorphism.
	\end{proof}
	
	\begin{remark}
		\label{rem:generation-by-points-and-hyperplanes-eulcidean-and-hyperbolic}
		In the Euclidean and hyperbolic cases, \cref{lem:gen-points-pt-to-ls} does not assume that the apartment class homomorphism $\Pt^\cL(X^n) \to \widetilde{H}_n(\PT^\cL(X^n))$ is an isomorphism, but we get to conclude that it is an isomorphism from the proof. It therefore gives the conclusion of \cref{thm:generated-by-hyperplanes} under different hypotheses, namely when $\cL$ is generated by both points and hyperplanes. In the Euclidean case, the hypotheses imply admissibility and hence the result is covered by the previous version (\cref{main-en}). In the hyperbolic case, however, it avoids the admissibility condition in \cref{admissible-hn} involving the nested polytopes $A_i$, and therefore covers some cases not covered by the previous version (\cref{main-hn}).
	\end{remark}
	
	\begin{remark}
		In the spherical case, when $\cL$ is closed under taking orthogonal complements, every polytope of the form $\{x,-x\} * P$ can be rewritten as $\{x,-x\} * P'$, where $P'$ is a polytope in the orthogonal complement of the span of $x$. (We see this by comparing their cones in $\R^{n+1}$: we obtain the cone for $P'$ by taking the orthogonal projection of the cone for $P$.) In this case we get a simpler description of the kernel where we only need to take orthogonal subspaces, as in \cite[Theorem 3.13]{dupont_book}.
	\end{remark}

\section{A conjectural resolution of the polytope group}

\cref{thm:generated-by-points} states that if $\cL$ is generated by points, then \[\Ls^\cL(X^n) \cong \widetilde{H}_n(\ST^\cL(X^n)).\] We defined $\Ls^\cL(X^n)$ via a presentation (see \cref{def:lee-szczarba-group}). In this section, we explain this presentation extends to a resolution and then suggest an analogue for the polytope group.

We proved
\[\frac{\Tpl(\cL^0)}{\Tpl(\cL^0)^{n-1}} \simeq \ST^\cL(X^n)\]  if $\cL$ is generated by points and hence the relative cellular chain complex for the quotient space $\Tpl(\cL^0) / \Tpl(\cL^0)^{n-1}$ gives a resolution of $\widetilde{H}_n(\ST^\cL(X^n))$. The $p$-syzygies of this resolution are given by the relative $p+n$ chains. These chains can be described as the free abelian group on the $(n+p)$-tuples of elements of $\cL^0$ modulo all tuples lying in an $(n-1)$-dimensional subspace. The differential is given by the alternating sum of forgetting elements of the tuple. See \cite[Theorem 3.1]{LeeSzczarba} for a similar argument.

\cref{thm:generated-by-hyperplanes} states that if $\cL$ is generated by hyperplanes and is admissible, then \[\Pt^\cL(X^n)\cong \widetilde{H}_n(\PT^\cL(X^n)).\] The restricted polytope group is defined in \cref{def:polytope-group} via the following presentation. $\Pt^\cL(X^n)$ is the free abelian group on the $\cL$-polytopes $P \subseteq X^n$, modulo the relation that whenever $P$ subdivides into finitely many $\cL$-polytopes $\{P_i\}$, we have $[P] = \sum_i [P_i]$. We will give a conjectural extension of this presentation to a resolution.

\begin{definition}
Let $\RR_\bullet^\cL(X^n)$ denote the following semi-simplicial set. The set of $p$-simplices $\RR_p^\cL(X^n)$ is the set of $p$-tuples of polytopes $(P_1,\ldots,P_p)$ with disjoint interiors. The face maps are given as follows:
\[d_i(P_1,\ldots,P_p) =
\begin{cases}
(P_2,\ldots,P_p), & \text{for } i=0,\\
(P_1,\ldots,P_i \cup P_{i+1},\ldots, P_p), & \text{for } 0<i<p,\\
(P_1,\ldots,P_{p-1}), & \text{for } i=p.\\
\end{cases}\]
Let $\RR^\cL(X^n)$ denote the geometric realization of $\RR_\bullet^\cL(X^n)$.
\end{definition}

Observe that $\widetilde{H}_0(\RR^\cL(X^n)) \cong 0$ and $\widetilde{H}_1(\RR^\cL(X^n)) \cong \smash{\Pt^\cL(X^n)}$. 

\begin{question} \label{resolutionQuestion}
Do the higher homology groups $\widetilde{H}_i(\RR^\cL(X^n))$ vanish for $i \geq 2$ when  $\cL$ is generated by hyperplanes and is admissible? 
\end{question}

If these homology groups vanish, then the cellular chains on $\RR^\cL(X^n)$ give a resolution of the polytope group in the style of the classical bar resolution. If they do not vanish, then the higher homology groups of $\RR^\cL(X^n)$ can be viewed as potentially interesting higher versions of the polytope group.
	
	\sloppy
	\printbibliography
\end{document}